\documentclass[12pt]{amsart}
\usepackage{amssymb,amsmath,amsfonts,epsfig,latexsym,texdraw}
\usepackage[all]{xy}

\setlength{\oddsidemargin}{0in} \setlength{\evensidemargin}{0in}
\setlength{\marginparwidth}{0in} \setlength{\marginparsep}{0in}
\setlength{\topmargin}{-.2in} \setlength{\headheight}{0pt}
\setlength{\headsep}{23pt} \setlength{\footskip}{.3in}
\setlength{\textwidth}{6.5in} \setlength{\textheight}{9.2in}
\setlength{\parskip}{4pt}

 \newtheorem{theorem}{Theorem}[section]
\newtheorem{definition}[theorem]{Definition}

\newtheorem{proposition}[theorem]{Proposition}
\newtheorem{lemma}[theorem]{Lemma}
\newtheorem{corollary}[theorem]{Corollary} 
\newtheorem{question}[theorem]{Question}

\theoremstyle{definition}

\newtheorem{example}[theorem]{Example}

\def\N{\ensuremath{\mathbb{N}}}
\def\Z{\ensuremath{\mathbb{Z}}}
\def\Q{\ensuremath{\mathbb{Q}}}

\def\cp{\ensuremath{\mathcal{P}}}

\def\C{\ensuremath{\mathbb{C}}}
\def\G{\ensuremath{\mathbb{G}}}
\def\A{\ensuremath{\mathbb{A}}}
\def\R{\ensuremath{\mathbb{R}}}
\def\K{\ensuremath{\mathbb{K}}}
\def\k{\ensuremath{\mathbf{k}}}

\def\O{\ensuremath{\mathcal{O}}}

\def\cul{\ensuremath{\mathcal{L}}}
\def\cu{\ensuremath{\mathcal{U}}}

\def\cc{\ensuremath{\mathcal{C}}}
\def\cd{\ensuremath{\mathcal{D}}}
\def\ck{\ensuremath{\mathcal{K}}}
\def\cm{\ensuremath{\mathcal{M}}}
\def\cn{\ensuremath{\mathcal{N}}}

\def\tXi{\ensuremath{\widetilde{\Xi}}}
\def\gp{\ensuremath{\operatorname{gp}}}

\def\<{\ensuremath{\langle}}
\def\>{\ensuremath{\rangle}}

\DeclareMathOperator{\Gal}{Gal}
\DeclareMathOperator{\Aut}{Aut}
\DeclareMathOperator{\Div}{Div}

 \DeclareMathOperator{\Hom}{Hom}

\DeclareMathOperator{\Spec}{Spec}

\DeclareMathOperator{\Trop}{Trop} \DeclareMathOperator{\val}{val}

\DeclareMathOperator{\ord}{ord}
\DeclareMathOperator{\res}{res}

\DeclareMathOperator{\dist}{dist}
\DeclareMathOperator{\sm}{sm}
\DeclareMathOperator{\an}{an}
\def\DLog{\operatorname{dlog}}

\begin{document}

\title[Lifting Tropical Curves in Space]{Lifting Tropical Curves in Space and Linear Systems on Graphs}

\author{Eric Katz}
\address{Department of Mathematics, University of Texas at Austin, Austin, TX 78712}
\email{eekatz@math.utexas.edu}

\begin{abstract}
Tropicalization is a procedure for associating a polyhedral complex in Euclidean space to a subvariety of an algebraic torus.  We study the question of which graphs arise from tropicalizing algebraic curves.  By using Baker's specialization of linear systems from curves to graphs, we are able to give a necessary condition for a balanced weighted graph to be the tropicalization of a curve.  Our condition reproduces a generalization of Speyer's well-spacedness condition and also gives new conditions.    In addition, it suggests a new combinatorial structure on tropicalizations of algebraic curves.
\end{abstract}

\maketitle

\section{Introduction}

Tropical geometry transforms questions in algebraic geometry to ones in polyhedral geometry.  Let $\K=\C((t))$ be the field of formal Laurent series, and let $\O=\C[[t]]$ be its valuation ring, the ring of formal power series.  To a subvariety in an algebraic torus $V\subset(\K^*)^n$, the method of tropicalization \cite{EKL,Spe05} associates a polyhedral complex $\Trop(V)\subset\R^n$.  It is a natural question to ask which polyhedral complexes arise in this fashion.  In this case, we say that the polyhedral complex {\em lifts}.  The case where $V$ is a curve has been studied in papers of Speyer \cite{Spe07}, Nishinou \cite{N}, and Tyomkim \cite{Tyomkin} and in a forthcoming paper of Brugall\'{e} and Mikhalkin.  These papers give necessary and sufficient conditions for a tropical curve to lift.

It is natural to enlarge the class of objects studied from curves in $(\K^*)^n$ to maps of curves by using the approach of Nishinou-Siebert \cite{NS}.  Given a map of a smooth curve $f:C^*\rightarrow(\K^*)^n$, there is a {\em parameterized tropicalization} $\Trop(f):\Sigma\rightarrow\R^n$, a map of a graph that is rational affine linear on edges and maps vertices to rational points.  In this case, $\Trop(f)(\Sigma)=\Trop(f(C^*))$.  Here, the graph $\Sigma$ is a certain kind of dual graph of a regular semistable model completing $C^*$ over $\O$.  To each edge $e$ of $\Sigma$ is associated a weight $\mu(e)\in\N$ that satisfies the balancing condition: for each vertex $v\in V(\Sigma)$ with adjacent edges $e_1,\dots,e_k$ in primitive integer vector directions $w_1,\dots,w_k$,
\[\sum \mu(e_i)w_i=0.\]
Therefore, one should direct one's attention to balanced weighted parameterized graphs $\Phi:\Sigma\rightarrow\R^n$.  There is an important dichotomy introduced by Mikhalkin \cite{M05} between regular and super-abundant parameterized graphs depending on whether they move in a family of the expected dimension.  In fact, one may do a dimension count of graphs that have the same combinatorial type and edge directions by varying the length of bounded edges subject to the constraint that loops must close up.  More precisely, if $E(\Sigma)^\bullet$ is the set of bounded edges of $\Sigma$ then there is a natural map
\[\R^{E(\Sigma)^\bullet}\rightarrow\Hom(H_1(\Sigma),\R^n)\]
taking a choice of edge-lengths to a function associating a cycle $\gamma\in H_1(\Sigma)$ to the total displacement when traveling around the edges of $\gamma$.  The kernel of this map intersected with the positive orthant, $\R_+^{E(\Sigma)^\bullet}$ is the
space of graphs with the same combinatorial type and edge-directions.  If this map is surjective then $(\Sigma,\Phi)$ is
said to be {\em regular}.  Otherwise, it is said to be superabundant.  If a curve is regular and the residue field $\k$ has characteristic $0$, then it lifts by a theorem proved by Speyer \cite{Spe05} (see also the discussion in \cite{Mi06} and proofs by Nishinou \cite{N} and Tyomkin \cite{Tyomkin} using deformation theory).
A particular case where a curve is superabundant is when a cycle $\Gamma$ is mapped to a proper affine subspace $H$ of $\R^n$.   In this case, the map
\[\R^{E(\Sigma)^\bullet}\rightarrow\Hom(H_1(\Sigma),\R^n)\rightarrow \Hom(H_1(\Gamma),\R^n)\]
is not surjective.  We say the curve is {\em planar-superabundant}.   In the case of genus $1$ curves, every superabundant curve is planar-superabundant.
 
For genus $1$ balanced weighted integral graphs all of whose vertices have degree $2$ or $3$, there is a necessary and sufficient condition due to Speyer \cite{Spe07} for a superabundant graph to lift.  Let $\Gamma$ be the cycle in $\Gamma'=\Trop(f)^{-1}(H)$.  Let $x_1,\dots,x_s$ be the boundary points of the connected component of $\Gamma'$ containing $\Gamma$.  Then the minimum lattice distance $\dist(x_i,\Gamma)$ must be achieved at least twice.  Speyer proves this using uniformization of curves.  Nishinou \cite{N} later gave a proof of the higher genus analog of this condition using log deformation theory while Baker-Payne-Rabinoff \cite{BPR} gave a proof of Speyer's condition using the group law on the Berkovich elliptic curve.  A paper giving an analytic proof of the higher genus analog is in progress by Brugall\'{e}-Mikhalkin.

In this paper, we give a necessary condition for planar-superabundant graphs to lift which is equivalent to Speyer's and Nishinou's conditions when they apply and which gives a new condition in additional cases.   Our technique is to use the specialization of linear systems from curves to graphs developed by Baker \cite{B}.  One considers a particular  log $1$-form on $C$,  $\omega_m=f^*\frac{\text{d}z^m}{z^m}$  where $z^m$ is a character of $(\K^*)^n$.  By standard results from log geometry, this $1$-form extends to a regular log $1$-form on a regular semistable model $\cc$ over $\O$ completing $C^*$.  $\Sigma$ is the dual graph of $\cc$.  The degree of vanishing of $\omega_m$ on components $C_v$ of the central fiber $\cc_0$ defines a piecewise-linear function $\varphi_m:\Sigma\rightarrow \R\cup\{\infty\}$ which is an element of the linear system associated to the canonical bundle, $L(K_\Sigma)$.  The values of $\varphi_m$ constrains the poles and zeros of the restriction of $(\omega_m)_v=\frac{\omega_m}{t^h}|_{C_v}$ for $v\in\Gamma$ and a suitable value of $h$.  For certain $v$ depending on $m$, $\frac{\omega_m}{t^h}|_{C_v}$ is an exact $1$-form.  This gives very strong conditions on $\varphi_m$. For example, we make use of the facts that an exact rational $1$-form does not have simple poles and that a non-zero exact $1$-form on a cycle of rational curves (which is a degenerate elliptic curve) must have two poles counted with multiplicity. 
In the case where $\cc_0$ has all components rational and all vertices of $\Sigma$ have degree $2$ or $3$, the constraints are purely combinatorial.  Let $\Sigma^\bullet$ be the subgraph of $\Sigma$ consisting of all vertices and bounded edges.
The main result of this paper is the following:

\begin{theorem} \label{maintheorem} Let $f:C^*\rightarrow(\K^*)^n$ be a map of a smooth curve with parameterized tropicalization $\Trop(f):\Sigma\rightarrow \R^n$.  After possibly subdividing each edge of $\Sigma$ into $l$ congruent segments, 
for any $m\in M=\Hom((\K^*)^n,\K^*)$, there exists a non-negative piecewise-linear function $\varphi_m$ on $\Sigma$ with integer slopes.   $\varphi_m$ satisfies the following properties:
\begin{enumerate}
\item $\varphi_m$ is in the linear system $L(K_{\Sigma})$,  that is, $\Delta(\varphi_m)+K_{\Sigma}\geq 0$,
\item for $e\in E(\Sigma)$ with $m\cdot e\neq 0$, $\varphi_m=0$ on $e$,
\item for $e\in E(\Sigma)$ with $m\cdot e=0$, $\varphi_m$ never has slope 0 on $e$, and
\item for any $c\in\R$, set $H=\{x|\<m,x\>=c\}$; let $\Gamma'=\Trop(f)^{-1}(H)$, considered as a subspace of $\Sigma$, and  let $\Gamma$ be a bounded connected subgraph contained in the interior of $\Gamma'$; then $\varphi_m$ is $\cc_0$-ample on $\Gamma$ in $\Gamma'$.
\end{enumerate}
Moreover, the map $m\mapsto \varphi_m|_{\Sigma^\bullet}$ gives a tropical homomorphism of $M$ to $L(K_{\Sigma})|_{\Sigma^\bullet}$. 
\end{theorem}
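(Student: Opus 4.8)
The plan is to build $\varphi_m$ directly from the logarithmic $1$-form $\omega_m = f^*\frac{dz^m}{z^m} = d\log(f^*z^m)$ and to read off properties (1)--(4) from the geometry of a log smooth semistable model. First I would set $g_m = f^*z^m$, which is a unit on $C^*$ because $f$ maps into the torus, so that $\omega_m = \frac{dg_m}{g_m}$ is an \emph{exact} log form. After a base change $t\mapsto t^{1/l}$ followed by semistable reduction --- which is exactly the operation subdividing each edge of $\Sigma$ into $l$ congruent segments --- the model $\cc$ is regular and log smooth over $\O$, with reduced central fiber, and $\omega_m$ extends to a section of the relative log canonical sheaf $\omega_{\cc/\O}^{\log}$. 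For each component $C_v$ of $\cc_0$ I set $\varphi_m(v)=\ord_{C_v}(\omega_m)$, the largest $h$ with $t^{-h}\omega_m$ a regular log form restricting nontrivially to $C_v$, and extend affinely across edges. Integrality of slopes is immediate once the special fiber is reduced, and non-negativity is arranged by subtracting the minimum (equivalently multiplying $\omega_m$ by a power of $t$), which affects none of (1)--(4). Since $M\cong\Z^n$ is finitely generated, a single sufficiently fine base change serves a basis and all $\omega_m$ extend on the common model, which is what lets one fix the subdivision before quantifying over $m$.

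The technical heart is a slope (Poincar\'e--Lelong) dictionary relating $\varphi_m$ to the leading forms $(\omega_m)_v = t^{-\varphi_m(v)}\omega_m|_{C_v}$. Writing $a_v=\ord_{C_v}(g_m)=\langle m,\Trop(f)(v)\rangle$ and using the local node model $xy=t^{\ell}$, one computes $\omega_m\equiv (a_u-a_v)\frac{dx}{x}+d\log(\text{unit})$ as a relative log form along $C_v$, so the residue of $(\omega_m)_v$ at the node $p_e$ is the jump $a_u-a_v$, which is nonzero precisely when $m\cdot e\neq 0$. This yields the dictionary I need: the order of $(\omega_m)_v$ at $p_e$, taken as a section of $\Omega^1_{C_v}(\log(\text{nodes}))$, records the slope of $\varphi_m$ along $e$ at $v$, while its residue detects whether $m\cdot e$ vanishes. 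For property (1), each $(\omega_m)_v$ is a regular log form on the rational component $C_v$, where $\deg\Omega^1_{C_v}(\log(\text{nodes}))=\deg(v)-2=K_{\Sigma}(v)$; summing the node orders through the dictionary identifies $\Delta(\varphi_m)+K_{\Sigma}$ with the specialization of the effective zero divisor of $\omega_m$, so it is $\geq 0$.

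Properties (2) and (3) then fall out of the residue computation. When $m\cdot e\neq 0$ the form $(\omega_m)_v$ has a genuine simple pole (nonzero residue) at $p_e$, hence is a \emph{unit} as a log form there, forcing $\ord_{C_v}(\omega_m)=0$ and $\varphi_m\equiv 0$ on $e$ after normalization. When $m\cdot e=0$ the residue vanishes, and since $C_v\cong\P^1$ is simply connected the residue-free form $(\omega_m)_v$ is exact; an exact rational $1$-form has no simple poles, so its order at $p_e$ is never $-1$, which through the dictionary forbids slope $0$ for $\varphi_m$ along $e$. For property (4) I restrict to $\Gamma'=\Trop(f)^{-1}(H)$, where $\langle m,-\rangle$ is constant, so every edge has $m\cdot e=0$ and every interior leading form is exact; the $\cc_0$-ampleness of $\varphi_m$ on $\Gamma$ is then the translation into slopes of the principle quoted in the introduction, that a nonzero exact $1$-form on the cycle of rational curves lying over $\Gamma$ must have at least two poles counted with multiplicity. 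Finally the ``moreover'' clause is formal: the identity $\omega_{m+m'}=\omega_m+\omega_{m'}$ gives $\ord_{C_v}(\omega_{m+m'})\geq\min(\ord_{C_v}(\omega_m),\ord_{C_v}(\omega_{m'}))$ with equality wherever the leading forms do not cancel, so $\varphi_{m+m'}=\varphi_m\oplus\varphi_{m'}$ in the tropical (min-plus) structure on $L(K_{\Sigma})|_{\Sigma^\bullet}$ --- exactly the asserted tropical linearity.

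I expect the main obstacle to be making this dictionary and the exactness argument fully rigorous in the log-geometric setting: pinning down $\ord_{C_v}(\omega_m)$ and the node orders of $(\omega_m)_v$ after the base change $t\mapsto t^{1/l}$, and verifying that ``exact with no simple poles'' is preserved under restriction to each rational component of the subdivided chain. This is what simultaneously powers property (3) and the $\cc_0$-ample condition in (4), and it is where the care in choosing the semistable model (reduced fiber, compatibility across all $m$) must be paid.
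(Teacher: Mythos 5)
Your overall architecture is the paper's own: $\varphi_m$ is the vanishing function of $\omega_m=f^*\frac{dz^m}{z^m}$ on a Nishinou--Siebert model, properties (1)--(3) come from a slope dictionary and residue arguments, and (4) from exactness of $\omega_m$. But two of the hardest steps are genuinely missing. First, the uniformity in $m$. The base change is not needed to make $\omega_m$ extend --- that is automatic from log smoothness --- but to make the zeroes of $\omega_m$ on the generic fiber $\K$-rational. This is what your dictionary silently uses: $\K$-points specialize to smooth points of $\cc_0$, whereas the closure of a zero defined over a ramified extension can pass through a node $p_e$ (e.g.\ the point $x=y=t^{1/2}$ in the local model $xy=t$), and then both the identity ``$\ord_{p_e}$ of the leading form equals the slope of $\varphi_m$'' and the computation showing $\Delta(\varphi_m)+K_\Sigma\geq 0$ acquire uncontrolled local intersection terms from horizontal components of the divisor of $\omega_m$ passing through nodes. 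Your fix --- ``$M$ is finitely generated, so one base change serves a basis'' --- does not work, because the zeroes of $\omega_{m_1+m_2}=\omega_{m_1}+\omega_{m_2}$ are unrelated to those of $\omega_{m_1}$ and $\omega_{m_2}$; rationalizing the zeroes of a basis says nothing about the other infinitely many $m$. The paper's Lemma \ref{KRationalZeroes} is the needed idea: all the $\omega_m$ lie in one finite-dimensional linear system of bounded degree $d$, and since $\Gal(\overline{\K}/\K)\cong\hat{\Z}$ for $\K=\C((t))$, the single extension $\K[t^{1/d!}]$ rationalizes the zeroes of every nonzero section simultaneously. The paper explicitly notes that over general equicharacteristic-zero valued fields this fails and the subdivision must depend on $m$, so no soft finiteness argument can replace this lemma.

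Second, property (4) and your route to (3). $\cc_0$-ampleness is not the degree principle ``a nonzero exact form on a cycle of rational curves has two poles'': it requires exhibiting a section of $\O_{(\cc_\Gamma)_0}(D_{\varphi_m})$ that is non-constant exactly on the components with $\varphi_m(v)=h$, for an arbitrary bounded connected $\Gamma$ (not just a cycle) and with no assumption that the $C_v$ are rational. Your two-poles translation recovers only the degree bound of Lemma \ref{cycledeg}, which is equivalent to $\cc_0$-ampleness only for cycles of rational curves. The paper's proof (Lemma \ref{c0ample}) contains a construction your sketch omits: on a thickened neighborhood of the subcurve, $t^{-c}f^*z^m$ is a unit and is constant modulo $t^h$; lifting that constant to $L$ and setting $\tilde{s}=(t^{-c}f^*z^m-L)/t^h$ produces the required rational function, with $d\tilde{s}$ equal, up to the constant $t^{-c}f^*z^m$, to the leading form $\omega_m/t^h$, so that its pole orders are exactly the negative slopes of $\varphi_m$ entering $\Gamma$. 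The same construction is what makes (3) work in general: at a vertex with $\varphi_m(v)=h>0$ the leading form is $(\omega_m/t^h)|_{C_v}$, whose residues cannot be read off from $g_m$ (which is constant to that order), so your argument ``residues vanish, $C_v\cong\P^1$ is simply connected, hence exact, hence no simple poles'' is circular --- residue-freeness at the nodes is precisely what must be proved --- and it additionally assumes maximal degeneracy, which Theorem \ref{maintheorem} does not. The local form of the antiderivative argument in Lemma \ref{nonvanishingonedges}, namely $\omega_m/t^h=d\tilde{s}/(t^{-c}f^*z^m)$ with $\tilde{s}$ regular at $p_e$, hence no simple pole, hence nonzero log-order and nonzero slope, is the non-formal step you need in both places.
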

The definitions of the terms in the statement are given in section 3.  This theorem is stated in maximum generality.  In many examples, the existence of $\varphi_m$ will only be obstructed for values of $m$ normal to an affine subspace $H$ containing a cycle of $\Sigma$.
  
We may rephrase this theorem as an obstruction to a balanced weighted integral graph $\Sigma'$ in $\R^n$ to be $\Trop(C)$ for a curve $C\subset (\K^*)^n$ in terms of the {\em tropical parameterizations} of Definition \ref{d:tparameterization}.

\begin{corollary} \label{c:istrop} If $\Sigma'=\Trop(C^*)$ then there is a tropical parameterization $p:\Sigma\rightarrow\Sigma'$ such that after a possible $l$-fold subdivision of $\Sigma$, there is a map $M\rightarrow L(K_\Sigma)$ given $m\mapsto\varphi_m$ satisfying the properties above.    \end{corollary}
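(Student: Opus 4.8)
The plan is to deduce the corollary from Theorem~\ref{maintheorem} by exhibiting a smooth curve whose parameterized tropicalization realizes $\Sigma'$. First I would replace the embedded curve by a smooth model: let $\nu:\tilde C^*\to C^*$ be the normalization and set $f=\iota\circ\nu:\tilde C^*\to(\K^*)^n$, where $\iota:C^*\hookrightarrow(\K^*)^n$ is the inclusion. This is a map of a smooth curve to the torus, so Theorem~\ref{maintheorem} applies to $f$ and produces a parameterized tropicalization $\Trop(f):\Sigma\to\R^n$, with $\Sigma$ the dual graph of a regular semistable model of $\tilde C^*$, together with the functions $\varphi_m$. Since $\nu$ is surjective, $f(\tilde C^*)=C^*$, and by the property of parameterized tropicalization recalled in the introduction, $\Trop(f)(\Sigma)=\Trop(f(\tilde C^*))=\Trop(C^*)=\Sigma'$ as subsets of $\R^n$.

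Because $\Trop(f)$ is rational affine-linear on each edge and sends vertices to rational points, and its image is exactly $\Sigma'$, it factors as $\Trop(f)=j\circ p$, where $j:\Sigma'\hookrightarrow\R^n$ is the inclusion and $p:\Sigma\to\Sigma'$ is a combinatorial map of graphs. The step requiring care is verifying that $p$ satisfies Definition~\ref{d:tparameterization}, i.e.\ that it is a genuine tropical parameterization. The combinatorial shape is automatic, so the real content is weight compatibility: for each edge $e'$ of $\Sigma'$ one must check that its intrinsic tropical weight equals the sum, over edges $e$ of $\Sigma$ with $p(e)=e'$, of $\mu(e)$ weighted by the lattice length of $p|_e$. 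This is the pushforward formula for tropical multiplicities, and it is exactly what reconciles the balancing of $\Sigma$ at its vertices with the balancing of $\Sigma'$. I expect this matching of the intrinsic weights on the unparameterized object $\Sigma'=\Trop(C^*)$ with the pushforward of the edge weights $\mu$ to be the main obstacle, since it is the only place where data not produced by the theorem must be identified.

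Finally I would account for the subdivision: the $l$-fold subdivision of $\Sigma$ in Theorem~\ref{maintheorem} is harmless, since after it each edge of $\Sigma$ still maps affine-linearly onto a portion of an edge of $\Sigma'$, so $p$ remains a tropical parameterization after a compatible refinement of $\Sigma'$. With $p$ so recognized, the assignment $m\mapsto\varphi_m$ supplied by the theorem satisfies properties (1)--(4) and assembles into the tropical homomorphism $M\to L(K_\Sigma)|_{\Sigma^\bullet}$, which is precisely the conclusion of the corollary.
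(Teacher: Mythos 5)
Your proposal is correct and follows essentially the same route as the paper: apply Theorem~\ref{maintheorem} to the (closed embedding of the smooth model of the) curve and observe that $\Trop(f):\Sigma\to\Trop(f(C^*))$ is a tropical parameterization, which the paper has already established as a lemma at the end of Section~6 (your ``main obstacle,'' the multiplicity pushforward, is exactly condition~(3) there, handled via the initial-degeneration definition of edge multiplicity). The paper's own proof is a two-line reduction to that lemma, so no further comment is needed.
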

 
We are able to derive the necessity of Speyer's well-spacedness condition from the above theorem.  
We also prove a weak version of well-spacedness suggested to us by Sam Payne that puts fewer constraints on the combinatorial type of $\Sigma$:

\begin{theorem} \label{weakwellspaced} 
Let $\Trop(f):\Sigma\rightarrow\R^n$ be the parameterized tropicalization of a map of a smooth curve with maximally degenerate reduction.  Let $m\in M$, $c\in\R$ and $H=\{x|\<m,x\>=c\}$, and let $\Gamma'$ be a component of $\Trop(f)^{-1}(H)$.  If $h^1(\Gamma')>0$ then, $\partial\Gamma'$ does not consist of a single trivalent vertex of $\Sigma$.
\end{theorem}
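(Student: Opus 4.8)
The plan is to read Theorem~\ref{maintheorem} as an existence statement and contradict it. I apply it to a character $m$ with $H=\{x:\langle m,x\rangle=c\}$, and suppose toward a contradiction that $\partial\Gamma'=\{v\}$ for a single trivalent vertex $v$ while $h^1(\Gamma')>0$. Maximal degeneracy makes every component of $\cc_0$ rational, so the conditions are purely combinatorial. The theorem then furnishes a non-negative integer-slope $\varphi_m$ with: (i) $\Delta(\varphi_m)+K_\Sigma\ge 0$; (ii) $\varphi_m\equiv 0$ on every edge leaving $H$; (iii) $\varphi_m$ has nowhere-zero slope on the edges of $\Gamma'$; and (iv) $\varphi_m$ is $\cc_0$-ample on $\Gamma$ in $\Gamma'$. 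Since the edges leaving $H$ are exactly the edges at $v$ not lying in $\Gamma'$, and $\varphi_m$ vanishes on them by (ii), the value $\varphi_m(v)=0$ is the global minimum; by (iii) each edge of $\Gamma'$ at $v$ therefore has outgoing slope $\ge 1$.

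First I would extract the local inequality at $v$ from (i). In the Baker normalization $\Delta(\varphi_m)(w)=-\sum_{e\ni w}(\text{outgoing slope})$, membership in $L(K_\Sigma)$ says that the total outgoing slope at each $w$ is at most $K_\Sigma(w)=\deg(w)-2$. Let $b$ be the number of edges at $v$ leaving $H$, so $v$ has $3-b$ edges in $\Gamma'$; these contribute outgoing slope $\ge 3-b$ while the leaving edges contribute $0$. Hence $3-b\le K_\Sigma(v)=1$, i.e. $b\ge 2$. Meanwhile $b=3$ is excluded, since it isolates $v$ in $\Gamma'$, forcing $\Gamma'=\{v\}$ and $h^1(\Gamma')=0$. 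This disposes of the principal configuration $b=1$ (the cycle running through $v$) and leaves only $b=2$: a single bridge from $v$ into the cyclic part $\Gamma''\subset\Gamma'$.

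The hard part will be this last case, and it is exactly where (iv) is needed. Because all edges leaving $H$ sit at $v$, the bridge is a cut edge separating $\Gamma$ from $v$; moreover the same effectivity count rules out interior minima in $\Gamma''$ (a local minimum of degree $d$ there would force total outgoing slope $\ge d>\deg-2$), so the bridge is the only direction along which $\varphi_m$ can descend from the cycle to its minimum. The $\cc_0$-ampleness in (iv) is the combinatorial form of the analytic input recalled in the introduction---a nonzero exact differential on a cycle of rational curves has no simple pole and at least two poles counted with multiplicity---and it amounts to Speyer's well-spacedness: the minimum lattice distance from $\Gamma$ to $\partial\Gamma'$ must be attained at least twice. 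With $\partial\Gamma'$ the single vertex $v$, reached from $\Gamma$ only across the lone bridge, this minimum is realized once, violating (iv). Therefore no $\varphi_m$ with properties (i)--(iv) exists; by Theorem~\ref{maintheorem} this cannot occur for the tropicalization of a curve, so $\partial\Gamma'$ is not a single trivalent vertex.

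I expect the real work to lie in the third paragraph: unwinding the definition of $\cc_0$-ampleness from section~3 into a precise count of descents at the level of $\varphi_m$, and confirming that a single trivalent $v$---forced by the count of the second paragraph to hang on one bridge---can support only one minimal descent, hence only one pole of the exact differential. A routine but necessary check is that the permitted $l$-fold subdivision alters neither $b$, the valence of the original vertex $v$, nor $h^1(\Gamma')$, so that all the counts above may be carried out safely on the subdivided graph.
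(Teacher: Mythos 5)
Your setup and the reduction in the second paragraph are sound and essentially parallel the paper: assuming $\varphi_m$ exists, you get $\varphi_m(v)=0$, and the count $3-b\le K_\Sigma(v)=1$ together with $b\neq 3$ forces exactly one edge $e$ of $\Gamma'$ at $v$ (the paper gets the same fact more directly from balancing). The observation that interior vertices of $\Gamma'$ cannot be local minima of $\varphi_m$ is also correct and is used in the paper.

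The genuine gap is in your third paragraph, at the step ``this minimum is realized once, violating (iv).'' $\cc_0$-ampleness of $\varphi_m$ on a cycle $\Gamma$ of rational curves is equivalent to $\deg(D_{\varphi_m})\ge 2$ together with a per-component condition, and the divisor $D_{\varphi_m}$ counts incoming negative slopes \emph{with multiplicity}. So a single attachment point of $\Gamma$ at which the descending slope is $\le -2$ already gives $\deg(D_{\varphi_m})\ge 2$ and does \emph{not} violate (iv) --- in the language of your own heuristic, an exact differential is allowed to have one double pole. Hence ``only one descent direction'' is not a contradiction by itself, and you cannot import Proposition~\ref{wellspaced} wholesale either: its proof assumes $\Sigma\setminus\Gamma$ is a forest and $\Gamma$ is $2$-vertex connected, which can fail here when $h^1(\Gamma')\ge 2$. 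The paper closes this hole in three moves you are missing: (i) it chooses $\Gamma$ to be a cycle minimizing $h_\Gamma=\min_{v'\in V(\Gamma)}\varphi_m(v')$; (ii) it shows there is at most one descending path from $\Supp D_{\varphi_m}$ to $v$ --- two such paths would both have to contain the unique edge $e$ at $v$, hence meet, and splicing them with an arc of $\Gamma$ would produce a cycle $\Gamma_2$ with $h_{\Gamma_2}<h_\Gamma$ --- so $D_{\varphi_m}$ is a single point of multiplicity $\ge 2$; and (iii) it propagates the slope $\le -2$ along that unique path all the way down to $v$, where it contradicts $\sum_{e'\ni v}s_{\varphi_m}(v,e')\le\deg(v)-2=1$ (the two edges leaving $H$ contributing slope $0$). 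Without steps (i)--(iii), the double-pole configuration survives and your contradiction does not go through.
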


For many balanced weighted integral graph $\Sigma'$ in $\R^n$, there are only finitely many  tropical parameterizations $p:\Sigma\rightarrow\Sigma'$ of a given genus $g$, so only finitely many cases need to be checked to see if our conditions prevent a graph from being the tropicalization of a curve of genus $g$. 
  
 We ask whether following partial converse is true:
 \begin{question} Let $\Phi:\Sigma\rightarrow\R^n$ be a balanced weighted parameterized graph all of whose vertices have degree $2$ or $3$.  Suppose there is an association $m\mapsto\varphi_m$ satisfying the above properties.  Does there exists a map of a smooth curve $f:C^*\rightarrow (\K^*)^n$ with parameterized tropicalization $\Trop(f)?$
 \end{question}
 As our condition only appears to be sensitive to the planar-superabundant case, the general situation may be richer than the above question implies.
 
A goal of this paper is to study the combinatorial content of deformation theory.  In a certain sense, our approach is very similar to that of Nishinou and Tyomkin.  We were curious about how the non-vanishing of an obstruction at a certain order in deformation theory would manifest itself combinatorially.  In fact, we unwound the definition of the obstruction in the log obstruction group $H^1(\cc_0,\mathcal{H}\text{om}(f^*\Omega^1_{X^\dagger/\O^\dagger},\Omega^1_{{\cc^\dagger_0/\O^\dagger}}))$.
The obstructions that we were seeing looked very much like the conditions on the rank of a linear system on a graph as developed by Baker-Norine \cite{BN}.  Ultimately, however, they did not exactly fit into that framework and instead had to do with whether a particular combinatorially defined divisor $D_\varphi$ on a nodal curve had a non-trivial linear system.    While the statement of our condition is unwieldy, we believe it to be natural.  It is derived from obstructions to a piecewise linear function on $\Sigma$ to be the orders of vanishing of a rational function $s$ on $\cc$ at components of the central fiber.  Specifically, there must be an additional piecewise linear function that encodes the orders of vanishing of the log differential of $s$.
This work can also be seen in light of the recent work of Baker, Payne, and Rabinoff on Berkovich curves \cite{BPR}: our conditions constrains piecewise linear functions on the analytification $(C^*)^{\an}$ that can appear as $\log|s|$ for rational functions $s$ on $C^*$.
We hope that our techniques can be used in other problems.  

The functions $\varphi_m$ give an additional structure on tropical curves that arise as tropicalizations.  In future work, we hope to explore the analogous structure on higher dimensional tropicalizations.  We expect this work to fit into the log geometry framework of Gross-Siebert \cite{GS2}.

A slightly weakened version of our main theorem holds for valuation fields $\K$ of equicharacteristic $0$.  For a given $m\in M$, one may produce $\varphi_m$ satisfying the conditions of our theorem on some subdivision of $\Sigma$.  This subdivision however may depend on $m$.

Our method of understanding the behavior of a function by looking at the restriction to the central fiber of a related $1$-form is very similar to Coleman's method of effective Chabauty in the bad reduction case as explored by Lorenzini-Tucker \cite{LT} and McCallum-Poonen \cite{MP}.   Since Abelian varieties have a theory of toric degenerations \cite{Mumford}, it may be possible to use our method to get bounds on the number of rational points that specialize to certain components in a degeneration of a curve in its Jacobian.

We give an outline of the paper.  Background on specialization of linear systems is given in section 2.  The notation used in the statement of Theorem \ref{maintheorem} is defined in section 3.  Section 4 shows that Theorem \ref{maintheorem} implies a generalization of Speyer's well-spacedness condition while section 5 shows that our obstruction is new by proving Theorem \ref{weakwellspaced} and giving a graph that does not lift to a curve of genus $3$ by our obstruction but which is not obstructed by any other known condition.  We assemble background on toric schemes, log structures, and tropicalization in section 6.  Section 7 defines the vanishing functions of sections of line bundles while section 8 studies the properties of the vanishing function of log differentials of generalized units.  We apply these methods to prove Theorem \ref{maintheorem} in section 9.

We'd like to acknowledge Matt Baker, Tristram Bogart, Erwan Brugall\'{e}, Mark Gross, David Helm, Gregg Musiker, Sam Payne, Bernd Siebert, Frank Sottile, and David Speyer for valuable discussions. We'd also like to thank Johannes Rau and the anonymous referee for a careful reading and helpful comments.  The author was  partially supported by NSF grant DMS-0441170 administered by the Mathematical Sciences Research Institute (MSRI) while the author was in residence at MSRI during the Tropical Geometry program, Fall 2009.  We would like to thank the organizers of the program and MSRI staff for making such a program possible.

\section{Specialization from Curves to Graphs}

We review some results on specialization of linear systems from curves to graphs due to Baker \cite{B}.  Our approach is a slight enlargement of his methods in that we allow curves with marked points.
A semistable family of curves $\cc$ over $\Spec \O$ is a family of curves such that the generic fiber $\cc_\K$ is smooth and the central fiber $\cc_0$ is reduced with only ordinary double points as singularities.  By blowing up the singular points, one can ensure that $\cc$ is regular.  A node of $\cc_0$ is formal locally parameterized by $\O[x,y]/(xy-t^l)$.  The family is regular near the node if and only if $l=1$.  A marked semistable family of curves is a family $\cc$ together with disjoint sections, $\sigma_1,\dots,\sigma_n:\Spec\O\rightarrow \cc^{\sm}$ valued in the smooth locus.

\begin{definition} The {\em dual graph} $\Sigma$ of a marked semistable curve $\cc$ is a graph with bounded and labelled unbounded edges (called leaves) whose vertices correspond to  components of the normalization $\pi:\widetilde{\cc}_0\rightarrow\cc_0$, whose edges correspond to nodes, and whose leaves correspond to marked points.  To each vertex $v$ is associated the genus $g(v)$ of the corresponding  component $C_v$ of $\widetilde{\cc}_0$.
\end{definition}

The genus of the dual graph is defined to be
\[g(\Sigma)=h^1(\Sigma)+\sum_v g(v).\]
In general, $g(\Sigma)$ is equal to the genus of $\cc_\K$.
$\cc$ is said to be {\em maximally degenerate} if all components of $\cc_0$ are rational.  In this case, $g(\cc_\K)=h^1(\Sigma)$.

Let $\cc$ be a semistable regular family of curves over $\O$ with dual graph $\Sigma$.  Let $V(\Sigma)$, $E(\Sigma)^\bullet$, $E(\Sigma)^\circ$ be the vertices, bounded edges, and leaves of $\Sigma$.  For $v\in V(\Sigma)$, let $C_v$ be the irreducible component of $\widetilde{\cc}_0$ corresponding to $v$.  For an edge $e\in E(\Sigma)^\bullet$ between $v_1,v_2$, let $p_e$ be the corresponding node in $\cc_0$ between $C_{v_1}$ and $C_{v_2}$.  For an unbounded edge $e'\in E(\Sigma)^\circ$, let $\sigma_{e'}$ be the corresponding marked point.   Write $p_{e'}=\sigma_{e'}(\Spec \k)$.  We fix 
homeomorphisms of the bounded edges with $[0,1]$ and of the unbounded edges with 
$[0,\infty)$.  The graph is given the induced metric.  We will often find it necessary to replace the generic fiber $\cc_\K$ by its base-change $\cc_\K\times_\K \K[t^{\frac{1}{l}}]$.  This is the generic fiber of the family 
$\cc\times_\O \O[t^{\frac{1}{l}}]$.  This family is no longer regular since the nodes are formal locally isomorphic to $\O[u][x,y]/(xy-u^l)$ where $u=t^{\frac{1}{l}}$.  The family becomes regular once we blow-up each node $l-1$ times giving a new model $\cc^l$ with a chain of $l-1$ rational curves in place of each node.  This has the effect of rescaling each bounded edge by a factor of $l$ and then subdividing it into $l$ equal edges each homeomorphic to $[0,1]$ producing a new dual graph $\Sigma_l$.  By convention, given an unbounded edge $e$ with homeomorphism $j:e\rightarrow [0,\infty)$, we define the corresponding homeomorphism in $\Sigma_l$, $j_l:e\rightarrow [0,\infty)$ by $j_l(s)=lj(s)$.

A divisor on $\Sigma$ is an element of the free abelian group on $V(\Sigma)$.  We write a divisor as $D=\sum_{v\in V(\Sigma)} a_v(v)$.  The group of all divisors is denoted by $\Div(\Sigma)$.  We say a divisor $D$ is non-negative  and write $D\geq 0$ if $a_v\geq 0$ for all $v\in V(\Sigma)$.  We write $D\geq D'$ if $D-D'\geq 0$.   The {\em canonical divisor} on $\Sigma$ is 
\[K_\Sigma=\sum_{v\in V(\Sigma)} (\deg(v)+2g(v)-2)(v).\]
In the maximally degenerate case, this is equal to the canonical divisor of \cite{BN}.
A {\em piecewise-linear function} $\varphi$ on $\Sigma$ is a function $\varphi:\Sigma\rightarrow \R$ that is linear on each edge.  Let $\varphi$ be a piecewise linear function on $\Sigma$ with integer slopes.  For $e\in E(\Sigma)$ and $v\in e$, let $s_{\varphi}(v,e)$ be the slope of $\varphi$ at $v$ along $e$ (oriented away from $v$).  The Laplacian of $\varphi$, $\Delta(\varphi)$ is the divisor on $\Sigma$ given by
\[\Delta(\varphi)=-\sum_{v\in V(\Sigma)} \sum_{e\ni v} s_\varphi(v,e)(v).\]

For $\Sigma$, the dual graph of $\cc$, the specialization map $\rho:\Div(\cc)\rightarrow\Div(\Sigma)$ is defined by,
for  $\cd\in\Div(\cc)$,
\[\rho(\cd)=\sum_{v\in\Gamma}\deg(\pi^*\O(\cd)|_{C_v})(v).\]

\begin{definition} Let $\Lambda$ be a divisor on $\Sigma$.  If $\varphi$ is a piecewise-linear function on $\Sigma$ with $\Delta(\varphi)+\Lambda\geq 0$, we write $\varphi\in L(\Lambda)$ to denote that $\varphi$ is in the linear system given by $\Lambda$.
\end{definition}

\section{Lifting Condition}

In this section, we define the terms in the statement of Theorem \ref{maintheorem} and draw some combinatorial consequences.
Let $f:C^*\rightarrow (\K^*)^n$ be a morphism of a smooth curve with regular semistable completion $f:\cc\rightarrow\cp$ and parameterized tropicalization (in the sense of Section \ref{s:tslst}) $\Trop(f):\Sigma\rightarrow N_\R$ where $\Sigma$ is the  dual graph of $\cc$.  For $\Gamma$ a subgraph of $\Sigma$, let $(\cc_\Gamma)_0$ be the subcurve of $\cc_0$ that is the union of components corresponding to vertices of $\Gamma$, that is,
\[(\cc_\Gamma)_0=\bigcup_{v\in\Gamma} C_v.\]  For $e\in E(\Sigma)$ and $m\in M$, let $m\cdot e$ denote the inner product $\<m,\Trop(f)|_e(1)-\Trop(f)|_e(0)\>$.

We define a tropical homomorphism.  For $a,b,c\in\R$, we write $a\oplus b\oplus c=0$ if the minimum of $\{a,b,c\}$ is achieved at least twice.  For $(-\infty,\infty]$-valued functions, $f,g,h$, we write $f\oplus g\oplus h=0$ if $f(x)\oplus g(x)\oplus h(x)=0$ for all $x$.

\begin{definition} Let $L$ be a set of $(-\infty,\infty]$-valued functions on a graph $\Sigma$.  For $G$, an abelian group, a {\em tropical homomorphism} of $G$ to $L$ is a function $a: G\rightarrow L$ such that 
\begin{enumerate}
\item $a(e)=\infty$,
\item For $g_1,g_2\in G$, $a(g_1)\oplus a(g_2) \oplus a(g_1+g_2)=0.$
\end{enumerate}
\end{definition}

For a subgraph $\Gamma\subset\Sigma$, let $\partial\Gamma$ denote the boundary of $\Gamma$ considered as a closed subset of $\Sigma$.  

\begin{definition} Let $\Gamma\subseteq\Gamma'$ be subgraphs of $\Sigma$ such that $\Gamma$ is bounded, connected, and contained in the interior of $\Gamma'$.   Let $\varphi$ be a piecewise-linear function on $\Sigma$ that has integer slopes on edges.  Set
\begin{eqnarray*}
h&=&\min_{v\in\Gamma} \varphi(v)\\
E^\partial_v&=&\{e\in E(\Gamma')\setminus E(\Gamma)| e\ni v\}.
\end{eqnarray*}
Define the effective divisor $D_\varphi$ on $(\cc_\Gamma)_0$ by 
\[D_\varphi=\sum_{v\in \partial\Gamma,\varphi(v)=h} \left(\sum_{e \in E_v^\partial,s_\varphi(v,e)<0} -s_\varphi(v,e)(p_e)\right).\]
We say {\em $\varphi$ is $\cc_0$-ample on $\Gamma$ in $\Gamma'$} if
the invertible sheaf $\O_{(\cc_\Gamma)_0}(D_\varphi)$ has a section which is non-constant exactly on components $C_v$ for vertices $v$ with $\varphi(v)=h$.
In other words, there is a meromorphic function $f$ on $(C_\Gamma)_0$ with $(f)+D_\varphi\geq 0$ that is constant exactly on the components corresponding to $v\in\Gamma$ with $\varphi(v)>h$. 
\end{definition}

Under certain circumstances, $\cc_0$-ampless is a combinatorial condition.  
\begin{lemma}
Let $\Gamma$ be a graph all of whose vertices have degree $2$ or $3$.  If all components of $\cc_0$ are rational, then $\cc_0$-ampleness is determined by $(\Sigma,\Gamma,\Gamma',\varphi)$.  
\end{lemma}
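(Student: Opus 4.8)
The plan is to reduce $\cc_0$-ampleness, which a priori refers to the geometry of the specific nodal curve $(\cc_\Gamma)_0$, to an isomorphism-invariant of a marked curve that, under the stated hypotheses, has no moduli. The key structural observation is that when every component $C_v$ is rational and carries at most three special points (the nodes together with the marked boundary points $p_e$, $e\in E_v^\partial$), the curve $(\cc_\Gamma)_0$ equipped with the marked points $\{p_e\}$ is \emph{rigid}: any two such curves of the same combinatorial type $(\Sigma,\Gamma,\Gamma')$ are isomorphic via an isomorphism carrying marked points to marked points. Granting this, the conclusion is immediate. The divisor $D_\varphi$ is supported at the $p_e$ with multiplicities $-s_\varphi(v,e)$ read directly off $\varphi$, and the prescribed non-constancy pattern (non-constant exactly on the $C_v$ with $\varphi(v)=h$) is likewise determined by $\varphi$; both are transported by any such isomorphism. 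Since the existence of a section of $\O_{(\cc_\Gamma)_0}(D_\varphi)$ realizing this pattern is preserved under isomorphisms of the pair (curve, divisor), it takes the same value for every curve of combinatorial type $(\Sigma,\Gamma,\Gamma')$, hence depends only on $(\Sigma,\Gamma,\Gamma',\varphi)$.

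For the rigidity step I would argue directly with automorphisms of $\P^1$. On each $C_v\cong\P^1$ fix an isomorphism sending its special points to standard positions among $\{0,1,\infty\}$, which is possible precisely because $C_v$ has at most three special points; it is here that both hypotheses are essential, since a component of positive genus, or a rational component with four or more special points, would carry a genuine modulus (a $j$-invariant or a cross-ratio) and rigidity would fail. A node is the transverse union of two branches and carries no further gluing parameter, so once standard coordinates are chosen on every component the curve is completely pinned down, and matching two curves of the same type becomes the tautological component-by-component identification in standard coordinates, consistent at the nodes because the gluing pattern agrees. A dimension count confirms the picture: after contracting the unstable (degree $2$) components, a trivalent genus-$g$ stable curve with $n$ marks is a $0$-dimensional stratum of $\overline{M}_{g,n}$, and the contracted chains of $\P^1$'s contribute no moduli.

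To exhibit the combinatorial nature explicitly, and to avoid invoking rigidity as a black box, I would alternatively describe the section space through the normalization sequence. A section of $\O_{(\cc_\Gamma)_0}(D_\varphi)$ is a tuple $(f_v)_{v\in\Gamma}$ with $f_v\in H^0(C_v,\O(D_\varphi|_{C_v}))$ subject to the matching conditions $f_{v_1}(p_e)=f_{v_2}(p_e)$ at each internal node $e\in E(\Gamma)$, so the section space is the kernel of the difference-at-nodes map $\bigoplus_{v} H^0(C_v,\O(D_\varphi|_{C_v}))\to\bigoplus_{e\in E(\Gamma)}\k$. Because $D_\varphi|_{C_v}$ is an effective divisor of combinatorially-determined degree supported at standard-position points, each summand has a basis with coefficients fixed by the data, and evaluation at the standard-position internal nodes is a matrix with combinatorially-determined entries; the non-constancy pattern cuts out an explicit locally closed condition. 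Solvability of this system, hence $\cc_0$-ampleness, is then visibly a function of $(\Sigma,\Gamma,\Gamma',\varphi)$ alone.

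The main obstacle I anticipate is the careful treatment of the degree-$2$ (unstable) components together with the marked boundary points. One must check that the normalization of special-point positions can be carried out consistently around cycles of $\P^1$'s, where, despite the residual $\mathbb{G}_m$ automorphisms of two-pointed components, no modulus survives because a node imposes no twisting parameter, and one must verify that the marked points $p_e$ supporting $D_\varphi$ are themselves among the at-most-three special points, so that the isomorphism furnished by rigidity genuinely preserves $D_\varphi$. Once this bookkeeping is in place, either argument yields the claim.
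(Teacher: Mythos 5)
Your proposal is correct and matches the paper's argument: the paper likewise asserts that $(\cc_\Gamma)_0$ is determined by its dual graph under these hypotheses and then identifies the section space via the normalization exact sequence, reading off $\cc_0$-ampleness as the condition that $\ker(r)$ meets the locus prescribing the non-constancy pattern. You supply more detail on the rigidity step (which the paper states in one sentence) and on the residual $\G_m$-automorphisms along chains and cycles of rational components, but the substance is the same.
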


\begin{proof} 
In this case, $(\cc_\Gamma)_0$ is completely determined by its dual graph $\Gamma$.   Let $\pi:(\widetilde{\cc}_\Gamma)_0\rightarrow (\cc_\Gamma)_0$ be the normalization map.  Then the normalization exact sequence gives
\[\xymatrix{
0\ar[r]&H^0((\cc_\Gamma)_0,\O(D_\varphi))\ar[r]&\bigoplus_{v\in V(\Gamma)} H^0(C_v,\O(D_\varphi|_{C_v}))\ar[r]^>>>>>r&\bigoplus_{e\in E(\Gamma)^\bullet}\O(D_\varphi)|_{p_e}\\
\ar[r]&H^1((\cc_\Gamma)_0,\O(D_\varphi))\ar[r]&\bigoplus_{v\in V(\Gamma)} H^1(C_v,\O(D_\varphi|_{C_v}))\ar[r]&0.
}\]
The existence of the desired section is exactly the condition that $\ker(r)$ intersects
\[\bigoplus_{v\in V(\Gamma)|\varphi(v)=h} (H^0(C_v,\O(D_\varphi|_{C_v}))\setminus \C_{C_v})\oplus \bigoplus_{v\in V(\Gamma)|\varphi(v)>h} \C_{C_v}\]
 where $\C_{C_v}$ denotes the constant functions on $C_v$ viewed as elements of $H^0(C_v,\O(D_\varphi|_{C_v}))$.
\end{proof}

\begin{lemma}
Suppose $(\cc_\Gamma)_0$ is a curve of arithmetic genus $0$.  $\varphi$ is  $\cc_0$-ample on $\Gamma\subset\Gamma'$ if and only if  and $\deg(D_\varphi|_{C_v})\geq 1$ for all $v\in V(\Gamma)$ with $\varphi(v)=h$.  
\end{lemma}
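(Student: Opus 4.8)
The plan is to use the arithmetic genus $0$ hypothesis to reduce the problem to elementary Riemann--Roch on the rational components, glued according to the tree structure of the dual graph. Since $\Gamma$ is connected and $(\cc_\Gamma)_0$ has arithmetic genus $0$, the graph $\Gamma$ is a tree and every component $C_v$ is a copy of $\P^1$. First I would record where $D_\varphi$ lives: its support consists of the points $p_e$ for $e\in E^\partial_v$, and since these edges lie in $\Gamma'\setminus\Gamma$, the points $p_e$ are smooth points of $(\cc_\Gamma)_0$ rather than nodes. Consequently $\deg(D_\varphi|_{C_v})$, which I abbreviate $d_v$, is $0$ unless $v\in\partial\Gamma$ with $\varphi(v)=h$; in particular $d_v=0$ whenever $\varphi(v)>h$. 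A meromorphic function $f$ with $(f)+D_\varphi\ge 0$ restricts on each $C_v$ to an element of the Riemann--Roch space $L(D_\varphi|_{C_v})$, and by the normalization sequence of the previous lemma such an $f$ is exactly a tuple $(f_v)_v$ with $f_v\in L(D_\varphi|_{C_v})$ agreeing at every internal node $p_e$, $e\in E(\Gamma)^\bullet$, where no pole occurs and so the matching is between honest scalars.

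For the forward implication I would argue directly. If $\varphi$ is $\cc_0$-ample there is such an $f$ that is non-constant precisely on the components with $\varphi(v)=h$. For each such $v$, $f_v$ is a non-constant element of $L(D_\varphi|_{C_v})$ on $C_v\cong\P^1$, so this space has dimension at least $2$; by Riemann--Roch on $\P^1$ this forces $d_v\ge 1$, which is the asserted inequality.

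The substance is the converse, which I would prove by constructing $f$ explicitly. Assume $d_v\ge 1$ for every $v$ with $\varphi(v)=h$. Choose a root of the tree $\Gamma$ and assign the functions $f_v$ in order of increasing distance from the root, maintaining the invariant that when $v$ is processed only the value of $f_v$ at the edge $e_0$ joining $v$ to its parent has been prescribed, namely the already-chosen value of the parent at $p_{e_0}$. Since $e_0\in E(\Gamma)^\bullet$ is internal, $p_{e_0}$ is not in the support of $D_\varphi|_{C_v}$, so evaluation $L(D_\varphi|_{C_v})\to\C$ at $p_{e_0}$ is defined and surjective with fibers of dimension $d_v$. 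If $\varphi(v)>h$ then $d_v=0$ and the fiber is the single constant equal to the prescribed value, so $f_v$ is forced to be constant; if $\varphi(v)=h$ then $d_v\ge 1$, the fiber has positive dimension, and since the constants meet it in a single point it contains a non-constant function, which I select. Iterating over the tree produces a global $f$ agreeing at every internal node and non-constant exactly on the components with $\varphi(v)=h$, witnessing $\cc_0$-ampleness.

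The main obstacle is the gluing in the converse: one must match values at all internal nodes while keeping $f$ non-constant on exactly the prescribed components. This is precisely where arithmetic genus $0$ enters---because $\Gamma$ is a tree there are no cyclic compatibility constraints, so each component need only be reconciled with its parent, and the single free evaluation condition on a $\P^1$ with $d_v\ge 1$ always leaves room to remain non-constant. On a curve with $h^1(\Gamma)>0$ the cycles would impose additional linear conditions and this naive top-down construction would break down, which is consistent with the more delicate conditions appearing elsewhere in the paper.
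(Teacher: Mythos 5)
Your proof is correct and follows essentially the same route as the paper's: the forward direction via the observation that a degree-zero divisor on a rational component admits only constant sections, and the converse by choosing a non-constant section on each component with $\varphi(v)=h$ and a constant elsewhere, then using the tree structure of $\Gamma$ (forced by arithmetic genus $0$) to glue across the nodes. The only difference is cosmetic: the paper fixes all sections first (non-vanishing at internal nodes) and reconciles them by non-zero scalar rescaling, whereas you match values inductively from a root using surjectivity of evaluation at an internal node on $L(D_\varphi|_{C_v})$; both gluing mechanisms work for the same reason.
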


\begin{proof}
Suppose $\deg(D_\varphi|_{C_v})=0$ for some $v\in V(\Gamma)$ with $\varphi(v)=h$.  Then the only sections of $\O_{C_v}(D_\varphi|_{C_v})$ are constants.  This contradicts $\cc_0$-ampleness.

Suppose $\deg(D_\varphi|_{C_v})\geq 1$ for all vertices with $\varphi(v)=h$.  Pick a non-constant section of $\O_{C_v}(D_\varphi)$ for each $v$ with $\varphi(v)=h$.  Such sections can be chosen to be non-zero at points $p_e$ corresponding to edges of $\Gamma$.  Pick a non-zero constant section for each $v$ with $\varphi(v)>h$.  Since $\Gamma$ is a tree, one can replace the sections on each component with non-zero constant multiples to get agreement across nodes.  \end{proof}

In general, $\cc_0$-ampleness is not a combinatorial condition and may depend on the curve $\cc_0$.   Below, we give one consequence of $\cc_0$-ampleness which applies generally and which we will use to obtain necessary conditions for lifting.

\begin{lemma}  \label{cycledeg}  Let $\Gamma\subset\Sigma$ be a $2$-vertex connected graph with no $1$-valent vertices. If  $\varphi$ is $\cc_0$-ample on $\Gamma\subset\Gamma'$ 
 then $\deg(D_\varphi)\geq 2$ and $\deg(D_\varphi|_{C_v})\geq 1$ for all $v\in V(\Gamma)$ with $\varphi(v)=h$.  
\end{lemma}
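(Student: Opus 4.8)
The plan is to read off structural information from the section produced by $\cc_0$-ampleness and then play it against the connectivity of $\Gamma$. Let $f$ be the rational function on $(\cc_\Gamma)_0$ with $(f)+D_\varphi\geq 0$ that is non-constant exactly on the components $C_v$ with $\varphi(v)=h$. The first thing I would record is that $D_\varphi$ is supported only at the points $p_e$ with $e\in E_v^\partial$, and that these are \emph{smooth} points of $(\cc_\Gamma)_0$ (half-nodes where $\Gamma$ meets $\Gamma'\setminus\Gamma$); at every genuine node of $(\cc_\Gamma)_0$, i.e. at $p_e$ with $e\in E(\Gamma)$, the divisor $D_\varphi$ has no support, so $f$ is regular there and its two branches take a common finite value.

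From this I would get the pointwise bound at once. Fix $v$ with $\varphi(v)=h$. By $\cc_0$-ampleness $f|_{C_v}$ is non-constant, and by the previous paragraph all of its poles lie among the smooth points $p_e$, $e\in E_v^\partial$, with polar divisor $\leq D_\varphi|_{C_v}$. A non-constant rational function on the smooth projective curve $C_v$ has at least one pole, so $\deg(D_\varphi|_{C_v})\geq 1$. (In particular every $v$ with $\varphi(v)=h$ must lie in $\partial\Gamma$, for otherwise $E_v^\partial=\emptyset$ and $f|_{C_v}$ would be forced constant.) This is the second assertion, and summing over the minimizing vertices already yields $\deg(D_\varphi)\geq 2$ as soon as at least two vertices attain the minimum $h$.

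The main case, and the only real work, is when a single vertex $v_0$ attains $\varphi(v_0)=h$; here I must exclude $\deg(D_\varphi)=1$. If it were $1$, then $f|_{C_{v_0}}$ would be a non-constant rational function with a single simple pole, hence an isomorphism $C_{v_0}\cong\P^1$, taking each value at most once. This is exactly where $2$-vertex-connectedness is used: deleting $v_0$ leaves $\Gamma$ connected, so $f$ is constant on each component of $(\cc_{\Gamma\setminus\{v_0\}})_0$ and, the values matching across nodes, equal to a single constant $c$ on all of it. Since $v_0$ has no $1$-valent incidence it meets $\Gamma\setminus\{v_0\}$ in at least two edges of $\Gamma$—edges distinct from the boundary edge carrying the pole, which lies in $E(\Gamma')\setminus E(\Gamma)$—so $f|_{C_{v_0}}$ attains the value $c$ at two distinct points, contradicting injectivity. (A self-loop at $v_0$ is handled identically: its two branches are distinct points of $C_{v_0}$ on which $f$ agrees.) Hence $\deg(D_\varphi)\geq 2$.

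The step I expect to be delicate is the bookkeeping of incidences at $v_0$: the contradiction rests on producing at least two \emph{repeated-value} points coming from edges of $\Gamma$, as opposed to the boundary half-node carrying the single pole, and on knowing the repeated value is one and the same constant $c$. It is $2$-vertex-connectedness—rather than mere $2$-edge-connectedness—that simultaneously keeps $\Gamma\setminus\{v_0\}$ connected and pins down this common constant, so I would be careful to invoke precisely that hypothesis.
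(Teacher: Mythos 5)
Your proposal is correct and follows essentially the same route as the paper: rule out $\deg(D_\varphi)=0$ because a non-constant section needs a pole, and rule out $\deg(D_\varphi)=1$ by noting that $f$ is then a degree-one function on the single supporting component $C_{v_0}$ while being constant on the connected complement $(\cc_\Gamma)_0\setminus C_{v_0}$, which meets $C_{v_0}$ in at least two points. Your treatment is somewhat more careful than the paper's (explicit handling of the self-loop case and of where $D_\varphi$ is supported), but the ideas are the same.
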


\begin{proof}
If $\deg(D_\varphi)=0$ then there is no global non-constant section of $\O(D_\varphi)$ so we may suppose $\deg(D_\varphi)=1$.  Then $D_\varphi$ is supported on some component $C_v$.  This implies that $f$ can be interpreted as a degree $1$ rational function on $C_v$ and is constant on $C'=(\cc_\Gamma)_0\setminus C_v$ which is connected.  Since $C'$ meets $C_v$ in at least two points, $f$ must attain the same value several times on $C_v$ which is impossible.

The condition on $\deg(D_\varphi|_{C_v})$ follows as in the above lemma.
\end{proof}

The above lemma can be thought of as the analog of the statement that a non-constant rational function on a curve with $g(C)\geq 1$ has at least two poles (counted with multiplicity).  If $\Gamma$ is a cycle of rational curves, the converse is true.

\begin{lemma}  Let $\Gamma\subset\Sigma$ be a cycle with $g(v)=0$ for all $v\in V(\Gamma)$.  Then  $\varphi$ is $\cc_0$-ample on $\Gamma\subset\Gamma'$ if and only if $\deg(D_\varphi)\geq 2$ and $\deg(D_\varphi|_{C_v})\geq 1$ for all $v\in V(\Gamma)$ with $\varphi(v)=h$.  
\end{lemma}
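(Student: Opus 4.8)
\emph{Proof plan.} I would dispatch the forward implication by appealing to Lemma~\ref{cycledeg}, whose hypotheses a cycle satisfies (it is $2$-vertex connected and has no $1$-valent vertices); in the degenerate one-vertex case $(\cc_\Gamma)_0$ is an integral nodal cubic, an arithmetic-genus-one curve, where the same bounds $\deg(D_\varphi)\geq 2$ and $\deg(D_\varphi|_{C_v})\geq 1$ follow directly from Riemann--Roch. The content is the converse, which I would prove by explicitly constructing the section. Set $X=(\cc_\Gamma)_0$; since every $g(v)=0$ and $\Gamma$ is a cycle, $X$ is a cycle of projective lines, a nodal curve of arithmetic genus $1$. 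Label the components $C_{v_1},\dots,C_{v_k}$ in cyclic order and let $q_i=p_{e_i}$ be the node joining $C_{v_i}$ to $C_{v_{i+1}}$ (indices mod $k$). The key structural observation is that $D_\varphi$ is supported at the points $p_e$ for $e\in E^\partial_v=E(\Gamma')\setminus E(\Gamma)$, which are \emph{smooth} points of $X$, distinct from the nodes $q_i$. Hence a section of $\O_X(D_\varphi)$ is the same datum as a tuple of rational functions $f_i\in L(D_\varphi|_{C_{v_i}})$, one on each $\P^1$, each regular at its two nodes $q_{i-1},q_i$, subject to the matching conditions $f_i(q_i)=f_{i+1}(q_i)$. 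By hypothesis every active component ($\varphi(v_i)=h$) carries $d_i:=\deg(D_\varphi|_{C_{v_i}})\geq 1$, while on an inactive component ($\varphi(v_i)>h$) we have $D_\varphi|_{C_{v_i}}=0$ and so $f_i$ is forced to be constant; thus the section sought in the definition of $\cc_0$-ampleness is exactly one that is constant on the inactive components and non-constant on the active ones.

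I would reduce the construction to choosing the node values $a_i:=f(q_i)\in\C$. On $\P^1$, since $d_i\geq 1$, evaluation $L(D_\varphi|_{C_{v_i}})\to\C^2$ at the two distinct points $q_{i-1},q_i$ (both off $\Supp D_\varphi$) is surjective with kernel $L(D_\varphi|_{C_{v_i}}-q_{i-1}-q_i)$ of dimension $d_i-1$. Consequently: an inactive component forces $a_{i-1}=a_i$; an active component with $d_i\geq 2$ admits a \emph{non-constant} $f_i$ realizing \emph{any} prescribed pair $(a_{i-1},a_i)$, since the evaluation fibre over that pair has positive dimension while the constants form only a line; and an active component with $d_i=1$ admits a non-constant $f_i$ realizing $(a_{i-1},a_i)$ precisely when $a_{i-1}\neq a_i$, the evaluation then being an isomorphism whose only diagonal preimages are constants. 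The problem thereby becomes a constraint-satisfaction around the cycle: choose $a_1,\dots,a_k$ so that each inactive edge forces equality, each degree-one active edge forces strict inequality, and each higher-degree active edge is unconstrained.

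Finally I would show this cyclic system is solvable exactly under $\deg(D_\varphi)\geq 2$. Contracting the vertices $q_i$ along the equality (inactive) constraints, a strict-inequality (degree-one active) constraint becomes unsatisfiable only if its two endpoints get identified, which around a cycle happens only when every \emph{other} edge is an equality constraint, i.e. when there is a single active component and it has degree one, forcing $\deg(D_\varphi)=1$. The hypothesis $\deg(D_\varphi)\geq 2$ rules this out, so I may assign pairwise distinct values to the finitely many contracted classes, lift these to non-constant $f_i$ on active components and to the common constant on inactive ones, and glue. This yields a section of $\O_X(D_\varphi)$ non-constant exactly on the active components, proving $\cc_0$-ampleness. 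The main obstacle, and the place where $\deg(D_\varphi)\geq 2$ enters essentially, is precisely this global consistency around the loop, which has no analogue in the tree (genus-zero) case of the preceding lemma; the degenerate cases of a single self-node or a $2$-cycle are handled by the same evaluation argument applied to the two branches meeting at each node.
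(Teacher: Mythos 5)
Your proof is correct and follows essentially the same route as the paper: necessity via Lemma~\ref{cycledeg}, and sufficiency by building the section component-by-component around the cycle of $\P^1$'s, taking constants on the components where $\varphi(v)>h$ and non-constant functions elsewhere, matched at the nodes. Your careful analysis of the cyclic constraint system (equality at inactive components, strict inequality at degree-one active ones) makes explicit what the paper compresses into ``this is possible because $\deg(D_\varphi)\geq 2$.''
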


\begin{proof}
Necessity follows from the lemma above.

Suppose $\deg(D_\varphi)\geq 2$.  Let $v_0,\dots,v_k$ be the vertices in the cycle ordered 
cyclically.  Let $e_i$ be the edge from $v_i$ to $v_{i+1}$.  Let $p_i$ be the corresponding node. 
Suppose $D_\varphi$ is supported on $C_{v_{i_l}}$, for $i_1<i_2<\dots<i_d$.  Now, pick non-constant sections $f_{i_l}$ of $\O(D_\varphi)$ on $C_{v_{i_l}}$ such that 
$f_{i_l}(p_{i_l})=f_{i_{l+1}}(p_{i_{l+1}-1})$.  This is possible because  $\deg(D_\varphi)\geq 2$.  Now, choose $f_i$ to be a constant on those $C_{v_i}$ disjoint from the support of $D_\varphi$.  These $f_i$'s can be chosen to agree on the nodes $p_i$ and therefore give the desired section.
\end{proof}

The conditions from Theorem \ref{maintheorem} can be translated to combinatorial constraints on $\varphi_m$ which we will use in later sections.  
\begin{lemma} \label{slopesums} Suppose that $g(v)=0$ for all vertices of $\Sigma$ and $\varphi$ satisfies conditions (1)-(3) of Theorem \ref{maintheorem}
For any vertex $v$,
\[\sum_{e\ni v|m\cdot e=0} s_{\varphi_m}(v,e)\leq \deg(v)-2.\]
If $v\in V(\Sigma)$ is a vertex with an adjacent edge $e_1$ such that  $m\cdot e_1\neq 0$, then
\begin{enumerate}
\item \label{l:e1} $\varphi_m(v)=0$,

\item \label{l:e2} the slope satisfies $1\leq s_{\varphi_m}(v,e)$ for all $e$ with $m\cdot e=0$,

\item \label{l:e3} if $v$ is trivalent, then there is at most  one adjacent edge $e$ with $m\cdot e=0$.  For that edge, we have $s_{\varphi_m}(v,e)=1$.
\end{enumerate}
\end{lemma}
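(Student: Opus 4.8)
The plan is to read off each assertion directly from the three hypotheses of Theorem~\ref{maintheorem}, translating them into slope inequalities at a single vertex $v$ via the definition of the canonical divisor $K_\Sigma$ and the Laplacian $\Delta(\varphi_m)$. Since all $g(v)=0$, the coefficient of $v$ in $K_\Sigma$ is $\deg(v)-2$. First I would write out condition (1), $\Delta(\varphi_m)+K_\Sigma\ge 0$, at the vertex $v$. By the definition of the Laplacian the coefficient of $(v)$ in $\Delta(\varphi_m)$ is $-\sum_{e\ni v} s_{\varphi_m}(v,e)$, so condition (1) at $v$ reads
\[
-\sum_{e\ni v} s_{\varphi_m}(v,e)+(\deg(v)-2)\ge 0,
\qquad\text{i.e.}\qquad
\sum_{e\ni v} s_{\varphi_m}(v,e)\le \deg(v)-2.
\]
Now I would invoke condition (2): for every edge $e\ni v$ with $m\cdot e\neq 0$ the function $\varphi_m$ vanishes identically on $e$, hence $s_{\varphi_m}(v,e)=0$ for such $e$. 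Dropping these zero terms from the sum immediately yields the displayed inequality $\sum_{e\ni v\mid m\cdot e=0} s_{\varphi_m}(v,e)\le \deg(v)-2$.

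For the three numbered parts I would assume $v$ has an adjacent edge $e_1$ with $m\cdot e_1\neq 0$. Part~\eqref{l:e1} is then immediate: since $\varphi_m\equiv 0$ on $e_1$ by condition (2), and $v$ is an endpoint of $e_1$, we get $\varphi_m(v)=0$. For part~\eqref{l:e2}, note that condition (3) forces $s_{\varphi_m}(v,e)\neq 0$ on any edge $e$ with $m\cdot e=0$; combined with the non-negativity of $\varphi_m$ (stated at the top of Theorem~\ref{maintheorem}) and the fact that $\varphi_m(v)=0$ is the minimum possible value, $\varphi_m$ cannot decrease as it leaves $v$ along $e$, so its slope there must be strictly positive, giving the integer bound $s_{\varphi_m}(v,e)\ge 1$.

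Part~\eqref{l:e3} is the one requiring the actual counting, and is where the three facts combine. Suppose $v$ is trivalent. Then $\deg(v)=3$, so the global inequality reads $\sum_{e\ni v\mid m\cdot e=0} s_{\varphi_m}(v,e)\le 1$. But by part~\eqref{l:e2} each edge $e$ with $m\cdot e=0$ contributes a slope $\ge 1$. If there were two or more such edges, the sum would be $\ge 2>1$, a contradiction; hence there is at most one, and for that single edge the inequality $1\le s_{\varphi_m}(v,e)\le 1$ pins the slope down to exactly $1$. The only subtlety I anticipate is bookkeeping the leaves versus bounded edges and confirming that condition (2)/(3) apply uniformly to all edges $e\ni v$ (including unbounded ones) so that the slope sum over $\{e\mid m\cdot e=0\}$ is exactly the total slope sum minus the zero contributions; this is routine once one checks the conventions on $s_\varphi(v,e)$ fixed in Section~2, and is the only place where care is needed rather than any genuine difficulty.
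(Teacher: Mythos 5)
Your proof is correct and follows essentially the same route as the paper: read condition (1) of Theorem \ref{maintheorem} at the vertex $v$ via the Laplacian, kill the terms from edges with $m\cdot e\neq 0$ using condition (2), and combine non-negativity, integrality of slopes, and condition (3) for the numbered parts. The one place you diverge is part (\ref{l:e3}): the paper deduces ``at most one adjacent edge with $m\cdot e=0$'' from the balancing condition at $v$, whereas you get it for free from the slope inequality together with part (\ref{l:e2}); your version is slightly cleaner in that it uses only the stated hypotheses (1)--(3) rather than the ambient balancing of the parameterized tropicalization, and both arguments are valid.
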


\begin{proof}
Since $K_\Sigma(v)=\deg(v)-2$ and $\varphi_m$ vanishes on edges not orthogonal to $m$, the inequality on slopes is a consequence of $\varphi_m\in L(K_\Sigma)$.

Condition (\ref{l:e1}) follows from $\varphi_m$ vanishing on edges not orthogonal to $m$.  Condition (\ref{l:e2}) is just the fact that $\varphi_m$ is non-negative and must have integral slopes on edges.  Condition (\ref{l:e3}) follows from balancing and the above inequality on slopes.
\end{proof}

We refer to a chain of edges connected by $2$-valent vertices as {\em smooth segments}.  By the above lemma, slopes of $\varphi_m$ are always non-increasing along smooth segments, hence $\varphi_m$ is concave there.

We now state the definition of tropical parameterization used in Corollary \ref{c:istrop}.  Below, a {\em weighted graph} is an abstract graph with non-negative integer weights on the edges.  Each edge is given length $1$.  A {\em balanced integral graph} $(\Sigma',\mu')$ is an immersed graph in $\R^n$ whose vertices have integer coordinates and which satisfies the balancing condition.  Each edge is given a length equal to its lattice length.  A {\em genus-marking} of a graph $\Sigma$ is a function $g:V(\Sigma)\rightarrow \Z_{\geq 0}$.

\begin{definition} \label{d:tparameterization} A {\em tropical parameterization}  of a 
balanced weighted integral graph $(\Sigma',\mu')$ immersed in $\R^n$ is a genus-marked weighted graph $(\Sigma,\mu,g)$ together with a map $p:\Sigma\rightarrow\Sigma'$ such that 
\begin{enumerate}
\item \label{par:length} For each edge $e\in E(\Sigma)$, $p|_e$ acts as dilation by a factor $\mu(e)$.

\item \label{par:bal} If each edge $e\in E(\Sigma)$ not contracted by $p$ is assigned the primitive integer direction $w(p(e))$ of $p(e)$, then $\Sigma$ is a balanced graph in the following sense: for any $v\in V(\Sigma)$,
\[\sum_{e\ni v} \mu(e)w(p(e))=0.\]

\item \label{par:mult} For any edge $e'\in E(\Sigma')$, we have 
\[\sum_{e \in p^{-1}(e')} \mu(e)=\mu(e'),\]

\item \label{par:semistable} if $v\in V(\Sigma)$ is a vertex all of whose edges are contracted by $p$ then the degree of $v$ is at least $2$.
\end{enumerate}
\end{definition}


The genus of $(\Sigma,\mu,g)$ is 
\[g(\Sigma)=h^1(\Sigma)+\sum_v g(v).\]

Note that if $(\Sigma',\mu')$ has only vertices of degree $2$ or $3$ and has all multiplicities equal to $1$, the only possible semistable tropical parameterization with all vertices of genus $0$ is the identity map.  We may study tropical parameterizations more generally.  We say that a vertex $(\Sigma',\mu')$ is {\em indecomposable} if its star cannot be written as the union (with multiplicities) of two proper balanced subgraphs.   If a vertex is indecomposable, it is impossible to insert edges contracted by $p$ at it in the parameterization.  Therefore, if each vertex is indecomposable, there are finitely many tropical parameterizations of $\Sigma'$  of a fixed genus.  These parameterizations correspond to different choices of pre-images of edges, different  combinatorial types of $\Sigma$, and different genus-markings.  If, in addition, all multiplicities of $(\Sigma',\mu')$ are $1$ then the only tropical parameterization with all vertices of genus $0$ is the identity map since the pre-image under $p$ of any edge is a single edge.  

\section{Comparison to Known Obstructions}

In this section, we relate $\cc_0$-ampleness of a function $\varphi$ on $\Gamma$ in $\Gamma'$ in a genus-marked graph $\Sigma$ to the necessity of Speyer's well-spacedness condition for genus $1$ curves to lift and its higher genus generalization by Nishnou \cite{N}.  We make the following assumptions:
\begin{enumerate}
\item All vertices of $\Sigma$ have degree $2$ or $3$,
\item \label{a:forest} $\Sigma\setminus\Gamma$ is a forest, and
\item \label{a:vc} $\Gamma$ is a bounded, $2$-vertex connected subgraph of $\Gamma'\setminus\partial\Gamma'$ with no $1$-valent vertices.
\item \label{a:genus} the genus $g(v)$ of every vertex of $\Sigma$ is $0$.
\end{enumerate}
Assumptions (\ref{a:forest}) and (\ref{a:vc}) are related to Nishinou's one-bouquet condition \cite{N} which is required for his necessary and sufficient generalization of Speyer's well-spacedness condition to apply.
Assumption (\ref{a:genus}) ensures that 
\[K_\Sigma=\sum_{v\in V(\Sigma)} (\deg(v)-2)(v).\]

The following condition is equivalent to Speyer's well-spacedness condition \cite{Spe05} in the genus $1$ case and generalizes it in higher genera:
\begin{proposition} \label{wellspaced} 
Let $\Trop(f):\Sigma\rightarrow\R^n$ be the parameterized tropicalization of a map of a smooth curve.  Let $m\in M$, $c\in\R$, and $H=\{x|x\cdot m=c\}$ and $\Gamma'=\Trop(f)^{-1}(H)$.
If $\Gamma$ is as above then the minimum of $\dist(w,\Gamma)$ for $w\in\partial\Gamma'$ must be achieved for at least two values of $w$.
\end{proposition}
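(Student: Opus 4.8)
The plan is to feed the conclusion of Theorem~\ref{maintheorem} into the combinatorial lemmas already established. Since $\Trop(f)$ is the parameterized tropicalization of an honest curve, Theorem~\ref{maintheorem}, applied to the given $m$ (after the $l$-fold subdivision it permits), produces a non-negative piecewise-linear $\varphi_m\in L(K_\Sigma)$ satisfying (1)--(4); I would first note that subdividing rescales every bounded edge uniformly and so changes neither the hypotheses on $\Gamma$ nor the set of boundary points realizing the minimal lattice distance $\dist(w,\Gamma)$. Property (4), specialized to this $H$, $\Gamma'$ and $\Gamma$, says $\varphi_m$ is $\cc_0$-ample on $\Gamma$ in $\Gamma'$. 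Because $\Gamma$ is $2$-vertex connected with no $1$-valent vertices, Lemma~\ref{cycledeg} then yields $\deg(D_\varphi)\geq 2$, where $h=\min_{v\in\Gamma}\varphi_m(v)$. This degree bound is the algebraic input, and the remaining task is to translate it into the distance statement.

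Next I would pin down where $h$ is attained and how $\varphi_m$ leaves $\Gamma$, using $\varphi_m\in L(K_\Sigma)$, the slope bound of Lemma~\ref{slopesums}, and the hypotheses (all degrees $\le 3$, all $g(v)=0$). All edges of $\Gamma$ are $m$-orthogonal, so by property (3) their slopes are nonzero integers; at a minimum the outgoing $\Gamma$-slopes are therefore $\ge 1$. Hence $h$ cannot be attained at a vertex all of whose incident edges lie in $\Gamma$: for a $2$-valent such vertex $\Delta(\varphi_m)\le -2$ while $K_\Sigma=0$, and for a trivalent one $\Delta(\varphi_m)\le -3$ while $K_\Sigma=1$, both contradicting $\Delta(\varphi_m)+K_\Sigma\ge 0$. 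Thus every vertex realizing $h$ is a trivalent boundary vertex of $\Gamma$ with exactly two edges in $\Gamma$ and one edge $e_v$ leaving into the forest $\Sigma\setminus\Gamma$, and the same count forces $s_{\varphi_m}(v,e_v)\le -1$. So $D_\varphi$ is supported exactly at the nodes $p_{e_v}$ of these minimum-value boundary vertices, and $\deg(D_\varphi)=\sum_v\bigl(-s_{\varphi_m}(v,e_v)\bigr)$.

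I would then convert slopes into distances by concavity. As $\Sigma\setminus\Gamma$ is a forest, each $w\in\partial\Gamma'$ is joined to $\Gamma$ by a unique path on which $\varphi_m$ is concave along smooth segments and vanishes at $w$. Tracing the descent out of a minimum-value vertex $v$ along $e_v$: the initial slope magnitude is $-s_{\varphi_m}(v,e_v)\ge 1$, concavity makes this magnitude non-decreasing, and the inequality $\Delta(\varphi_m)+K_\Sigma\ge 0$ at each interior trivalent vertex of $\Gamma'$ shows the descent can only continue, and at worst split, until it reaches a point of $\partial\Gamma'$ where $\varphi_m=0$. Comparing the total drop $h$ with the path length gives $\dist(w,\Gamma)\le h/(-s_{\varphi_m}(v,e_v))$ for the boundary point so reached; since $d:=\min_{w}\dist(w,\Gamma)$ bounds this from below, one gets $d\cdot(-s_{\varphi_m}(v,e_v))\le h$, in particular $h\ge d$.

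The main obstacle is the final bookkeeping that turns $\deg(D_\varphi)\geq 2$ into the statement that $\min_w\dist(w,\Gamma)$ is attained at least twice. The delicacy is that $D_\varphi$ records the minimum \emph{value} $h$ while well-spacedness concerns the minimum \emph{distance} $d$, and I must match them: I need that the boundary points at distance $d$ are precisely the terminal points of the steepest descents from the minimum-value vertices, and that each unit of $\deg(D_\varphi)$ contributes a distinct such point. The intended dichotomy is: if $\deg(D_\varphi)\ge 2$ is realized by two distinct minimum-value boundary vertices, their descents lie in different trees of the forest and terminate at two distinct boundary points at distance $d$; if it is realized by a single vertex with slope $\le -2$, then the Laplacian inequality at the trivalent branch vertices of $\Gamma'$ forces that descent to split into at least two branches, again yielding two boundary points at distance $d$. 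Making this precise---ruling out that a steep descent is ``absorbed'' before reaching distance $d$, and upgrading $\dist(w,\Gamma)\le h/|s|$ to the equality $\dist(w,\Gamma)=d$ at the relevant points---is where the real work lies; it is exactly the combinatorial shadow of the fact that the ample section on the cycle of rational curves must distribute its poles over at least two of the outgoing directions.
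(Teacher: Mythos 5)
Your setup is the paper's: extract $\varphi_m$ from Theorem \ref{maintheorem}, use $\cc_0$-ampleness and Lemma \ref{cycledeg} to get $\deg(D_{\varphi_m})\geq 2$, and observe that every vertex realizing $h=\min_{v\in\Gamma}\varphi_m(v)$ must be a trivalent boundary vertex of $\Gamma$ with an outgoing edge of negative slope. But the argument stops exactly where the proposition is actually proved. Your final paragraph concedes that converting $\deg(D_{\varphi_m})\geq 2$ into ``the minimal distance is attained twice'' is ``where the real work lies,'' and the dichotomy you sketch does not close the gap: $D_{\varphi_m}$ is supported at vertices where $\varphi_m$ attains the minimal \emph{value} $h$, and nothing you have established identifies $h$ with the minimal \emph{distance} $d=\min_{w\in\partial\Gamma'}\dist(w,\Gamma)$, nor guarantees that the descending paths issuing from those vertices terminate at boundary points realizing $d$; your own estimate only gives $\dist(w,\Gamma)\le h$, which is compatible with those endpoints lying strictly farther than $d$ when $h>d$. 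As written, the direct implication ``$\deg(D_{\varphi_m})\ge 2$ implies two closest boundary points'' is asserted, not proved.

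The missing idea is that the paper runs the argument by contradiction. Assume the minimum $l=\min_{v\in\partial\Gamma} l(v)$ is realized by a single vertex $v$ via a single shortest path ($\#F=1$ and $\#S_v=1$). Lemma \ref{decpaths} --- whose proof requires the pruning operation of Lemma \ref{pruning} to discard side-trees with positive outgoing slope and reduce the shortest path to a smooth segment on which concavity forces slope identically $1$ --- then gives the \emph{exact} statements $s_{\varphi_m}(v,e)=-1$ and $\varphi_m(v)=l(v)=l$, while every other $w\in\partial\Gamma$ either has positive outgoing slope or satisfies $\varphi_m(w)\ge l(w)>l=\varphi_m(v)$, hence is not a minimum-value vertex and contributes nothing to $D_{\varphi_m}$. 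This forces $\deg(D_{\varphi_m})=1$, contradicting Lemma \ref{cycledeg}. To complete your proof you would need either to reproduce this contrapositive structure or to supply the identification $h=d$ together with the exactness claims (unit slope along a unique geodesic) that your sketch presupposes.
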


We first go through an example to show how to derive well-spacedness in genus $1$ from our condition.
Consider a tropical elliptic curve $\Trop(f)(\Sigma)$ in $\R^3$ such that $\Gamma'$ looks like the following graph: 
\begin{center}\begin{texdraw}
       \drawdim cm  \relunitscale 0.5
       \linewd 0.03
        \move(-.5 -.5) \lvec (2 2)
        \lvec(7 2)
        \move(2 2) \fcir f:0 r:0.1
        \lvec(2 3) \fcir f:0 r:0.1
        \lvec(4 5) \fcir f:0 r:0.1
        \lvec(4 6)\fcir f:0 r:0.1
        \lvec(3 7)\fcir f:0 r:0.1
        \lvec(2 7)\fcir f:0 r:0.1
        \lvec(0 5)\fcir f:0 r:0.1
        \lvec(0 3)\fcir f:0 r:0.1
        \lvec(2 3)\fcir f:0 r:0.1
       \move(0 3)
       \lvec(-2 1)
       \move(0 5)
       \lvec(-1 5)
       \lvec(-1 9)\fcir f:0 r:0.1
       \move(-1 5)\fcir f:0 r:0.1
       \lvec(-3.5 2.5)
       \move(3 7)
       \lvec(3 9)\fcir f:0 r:0.1
       \move(2 7)
       \lvec(2 9)\fcir f:0 r:0.1
       \move(4 5)\lvec(7 5)
       \move(4 6)\lvec(7 6)
       \htext(-.8 8){\footnotesize $a$}
       \htext(2.2 8){\footnotesize $b$}
       \htext(3.2 8){\footnotesize $c$}
       \htext(-.55 5.2){\footnotesize $d$}
       \htext(1.65 2.25){\footnotesize $e$}
\end{texdraw}
\end{center}
Here edges may be subdivided,  those edges not terminating in a vertex are taken to be unbounded, and each edge is given multiplicity $1$.  Take $\Gamma$ to be the cycle and orient the edges not in the cycle so that they point towards $\Gamma$.  By Lemma \ref{cycledeg}, the divisor $D_{\varphi_m}$ must have degree at least $2$.  The degree of $D_{\varphi_m}$ is the sum of the positive slopes of edges coming into $\Gamma$ at points where $\varphi_m$ is minimized.   $\varphi_m$ is concave and non-negative  along unbounded edges so they cannot contribute to the divisor $D_{\varphi_m}$ on the cycle.  The slope along edges $a,b,c$ is at most $1$ and is non-increasing along them since $\Delta(\varphi_m)\geq 0$ on the interior of those edges.  By Lemma \ref{slopesums}, the slope on edge $e$ must be non-positive.   Similarly, the slope on edge $d$ must be less than or equal to the smallest slope on $a$.  Therefore, the only positive slopes entering $\Gamma$ must come from edges $d$,$b$,$c$.  At the points where those edges intersect $\Gamma$, $\varphi_m$ must be less than or equal to the distance from those points to $\partial\Gamma'$.  Equality is achieved if and only if the slope is $1$ at those points.  It follows that since the value of $\varphi_m$ on $\Gamma$ must be minimized at two of those points where the slope is $1$, the minimum of $\{|a|+|d|,|b|,|c|\}$ must be achieved at least twice.

Now, we consider the general situation.  Let $v\in\partial\Gamma$ and let $T_v$ be the unique tree in $\Sigma\setminus(\Gamma\setminus\partial\Gamma)$ containing $v$.  Direct the edges of $T_v$ so that they point towards $v$.  Write $T_v\cap\partial\Gamma'=\{w_1,\dots,w_k\}$.  Let $\gamma_1,\dots,\gamma_k$ be the paths from $w_1,\dots,w_k$ to $v$.  Let $l_i$ be the length of $\gamma_i$, $l(v)=\min({l_i})$, $S_v=\{i|l_i=l(v)\}$.  

We will need to describe a pruning operation on the graph $\Sigma$ that will remove $T_v$

\begin{lemma} \label{pruning}  Suppose
\begin{enumerate}
\item $\Delta(\varphi_m)+K_\Sigma\geq 0$, and 
\item $\varphi_m$ is $\cc_0$-ample with respect to $\Gamma$ in $\Gamma'$.
\end{enumerate}
Let $v\in\partial\Gamma$, and let  $e$ be the unique edge of $T_v$ adjacent to $v$.
If $s(v,e)>0$ then, for $\Sigma'=\Sigma\setminus (T_v\setminus\{v\})$,
$\varphi_m|_{\Sigma'}$ satisfies:
\begin{enumerate}
\item $\Delta(\varphi_m|_{\Sigma'})+K_{\Sigma'}\geq 0$, and 
\item $\varphi_m|_{\Sigma'}$ is $\cc_0$-ample with respect to $\Gamma$ in $\Gamma'\setminus (T_v\setminus\{v\})$.
\end{enumerate}

\end{lemma}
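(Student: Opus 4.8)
The plan is to prune the tree $T_v$ from $\Sigma$ and verify that the two hypotheses on $\varphi_m$ survive the operation. The setup is that $T_v$ is a tree attached to $\Gamma$ at the single boundary vertex $v$, the edges of $T_v$ are oriented toward $v$, and $e$ is the unique edge of $T_v$ meeting $v$. The assumption $s(v,e)>0$ is the slope of $\varphi_m$ at $v$ along $e$; since $e$ is oriented toward $v$ this should be read as $\varphi_m$ increasing along $e$ as we move away from $\Gamma$ into $T_v$, or equivalently the slope of $\varphi_m$ at $v$ in the direction \emph{into} $T_v$ is positive. My first task is to check condition (1), the canonical-system inequality, on the pruned graph $\Sigma'=\Sigma\setminus(T_v\setminus\{v\})$.

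\textbf{Step 1: the canonical inequality after pruning.} The only vertex whose local data changes is $v$ itself: removing $T_v\setminus\{v\}$ deletes the edge $e$ from $v$, lowering $\deg(v)$ by $1$ and hence lowering $K_{\Sigma'}(v)=\deg_{\Sigma'}(v)-2$ by $1$ relative to $K_\Sigma(v)$. At the same time $\Delta(\varphi_m|_{\Sigma'})(v)$ changes because we drop the term $-s_{\varphi_m}(v,e)$ from the sum defining the Laplacian at $v$. Writing out the local balance, $\big(\Delta(\varphi_m|_{\Sigma'})+K_{\Sigma'}\big)(v)=\big(\Delta(\varphi_m)+K_\Sigma\big)(v)+s_{\varphi_m}(v,e)-1$. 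Here $s_{\varphi_m}(v,e)$ is the slope at $v$ along $e$ oriented \emph{away} from $v$; the hypothesis $s(v,e)>0$ and integrality give $s_{\varphi_m}(v,e)\ge 1$, so the correction $s_{\varphi_m}(v,e)-1\ge 0$. Since the original quantity is $\ge 0$ by hypothesis (1) and every other vertex is unaffected, condition (1) holds on $\Sigma'$.

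\textbf{Step 2: $\cc_0$-ampleness after pruning.} I would argue that pruning $T_v$ does not change the divisor $D_{\varphi_m}$ at all, so that $\cc_0$-ampleness transports verbatim. The divisor $D_{\varphi_m}$ is supported on boundary vertices $v'\in\partial\Gamma$ where $\varphi_m(v')=h$ and records, for each edge $e'\in E_{v'}^\partial$ with $s_{\varphi_m}(v',e')<0$, the contribution $-s_{\varphi_m}(v',e')(p_{e'})$. The relevant edges $E_{v'}^\partial$ lie in $\Gamma'\setminus\Gamma$, which is exactly the region I must understand: after pruning, $\Gamma'$ is replaced by $\Gamma'\setminus(T_v\setminus\{v\})$. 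The key observation is that the edge $e$ contributes to $D_{\varphi_m}$ only if $s_{\varphi_m}(v,e)<0$, but the hypothesis says $s(v,e)>0$, so $e$ never contributed to $D_{\varphi_m}$ in the first place; deleting it leaves the divisor unchanged. Since $(\cc_\Gamma)_0$ and $D_{\varphi_m}$ are both unaltered, the section witnessing $\cc_0$-ampleness on $\Gamma$ in $\Gamma'$ is literally the same section witnessing it on $\Gamma$ in $\Gamma'\setminus(T_v\setminus\{v\})$, giving condition (2).

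\textbf{The main obstacle} I anticipate is a sign/orientation bookkeeping subtlety: the statement $s(v,e)>0$ and the slope $s_{\varphi_m}(v,e)$ in the Laplacian convention (``oriented away from $v$'') must be reconciled with the orientation of $T_v$ toward $v$, so that the \emph{same} positivity simultaneously (a) makes the canonical correction $s_{\varphi_m}(v,e)-1$ nonnegative in Step 1 and (b) guarantees $e$ does not appear in $D_{\varphi_m}$ in Step 2. I would pin these conventions down carefully at the outset, since the whole lemma hinges on the single edge $e$ carrying a positive outgoing slope. A secondary point to verify is that $\Gamma$ still sits in the interior of the pruned $\Gamma'\setminus(T_v\setminus\{v\})$ and remains bounded and connected, so that the $\cc_0$-ampleness definition still applies; this follows because $T_v$ meets $\Gamma$ only at $v\in\partial\Gamma$, so no interior vertex of $\Gamma$ is disturbed.
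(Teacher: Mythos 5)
Your proof is correct and follows essentially the same route as the paper: the edge $e$ cannot contribute to $D_{\varphi_m}$ because its outgoing slope at $v$ is positive rather than negative, so the witnessing section is unchanged, and at the vertex $v$ the Laplacian gains $s_{\varphi_m}(v,e)\geq 1$ while $K(v)$ drops by exactly $1$, preserving $\Delta(\varphi_m|_{\Sigma'})+K_{\Sigma'}\geq 0$. Your explicit bookkeeping identity $\bigl(\Delta(\varphi_m|_{\Sigma'})+K_{\Sigma'}\bigr)(v)=\bigl(\Delta(\varphi_m)+K_\Sigma\bigr)(v)+s_{\varphi_m}(v,e)-1$ is exactly the content of the paper's two displayed inequalities, just written in one line.
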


\begin{proof}
It is clear that $\varphi_m|_{\Sigma'}$ satisfies the $\cc_0$-ampleness condition since the edge $e$ does not contribute to $D_{\varphi_m}$.

If $s_{\varphi_m}(v,e)>0$ then $\Delta(\varphi_m |_{\Sigma'})(v)\geq \Delta(\varphi_m)(v)+1$ while $K_\Sigma(v)=K_{\Sigma'}(v)-1$.  
Therefore, 
\[\Delta(\varphi_m|_{\Sigma'})(v)+K_{\Sigma'}(v)\geq \Delta(\varphi_m)(v)+K_\Sigma(v)\geq 0.\]
\end{proof}

The following lemma constrains the value of $\varphi_m$ at points of $\partial\Gamma$ that could possibly contribute to $D_{\varphi_m}$.

\begin{lemma} Suppose $\varphi_m$ is a non-negative piecewise linear function on $\Sigma$ satisfying 
$\Delta(\varphi_m)+K_{\Sigma}\geq 0$ with slope $1$ near $\partial\Gamma'$ and 
$\varphi_m|_{\partial\Gamma'}=0$.  Suppose also that $\varphi_m$ never has slope $0$ on 
$\Gamma'$.  Let $v\in\partial\Gamma$ and $e$ be the edge of 
$\Gamma'\setminus\Gamma$ adjacent to $v$.  If $s_{\varphi_m}(v,e)<0$ then $\varphi_m(v)\geq l(v)$.  If, in addition, $\#S_v=1$, then $s_{\varphi_m}(v,e)=-1$ and $\varphi_m(v)=l(v)$. \label{decpaths}
\end{lemma}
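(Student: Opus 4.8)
The plan is to analyze $\varphi_m$ on the tree $T_v$, which by the forest assumption is a genuine tree; root it at $v$ and orient every edge toward $v$, so that the leaves are exactly the points $w_1,\dots,w_k\in\partial\Gamma'$. For a vertex $u\neq v$ of $T_v$ write $\sigma(u)$ for the slope of $\varphi_m$, measured in the direction \emph{toward} $v$, on the edge joining $u$ to its parent; thus $s_{\varphi_m}(v,e)=-\sigma(u_1)$ where $u_1$ is the child of $v$ along $e$, and the hypothesis $s_{\varphi_m}(v,e)<0$ says $\sigma(u_1)\geq 1$ (slopes are integers and, by hypothesis, never $0$ on $\Gamma'$). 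I record the facts I will use repeatedly: along a smooth segment $\varphi_m$ is concave, so $\sigma$ is nonincreasing as one moves toward $v$; at a vertex $u$ all of whose incident edges lie in $\Gamma'$, the relation $\Delta(\varphi_m)+K_\Sigma\geq 0$ together with $g(u)=0$ gives $\sum_{f\ni u}s_{\varphi_m}(u,f)\leq\deg(u)-2$; and on the terminal edge at each leaf $w_i$ the slope toward $v$ equals $+1$. For a vertex $u$ let $D(u)$ be the graph distance from $u$ to the nearest leaf of the subtree $T_u$ below $u$, so that $l(v)=|vu_1|+D(u_1)$ and $\#S_v=\#S(u_1)$, where $S(u)$ is the set of nearest leaves below $u$.

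For the first assertion I descend from $v$ along a path on which $\sigma\geq 1$. Starting at $u_1$ we have $\sigma(u_1)\geq 1$. At a trivalent vertex $u$ reached with $\sigma(u)\geq 1$, the edge toward $v$ contributes $s_{\varphi_m}(u,\cdot)=\sigma(u)\geq 1$ to the slope sum, so the two edges toward the leaves satisfy $s_{\varphi_m}(u,f_1)+s_{\varphi_m}(u,f_2)\leq(\deg u-2)-\sigma(u)\leq 0$; since neither slope is $0$, at least one of them is $\leq -1$, i.e.\ at least one child $c$ has $\sigma(c)\geq 1$, and I continue along it (on smooth segments concavity preserves $\sigma\geq 1$). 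This produces a path from $v$ to some leaf $w_j$ along which $\varphi_m$ increases with slope $\geq 1$ toward $v$, whence $\varphi_m(v)\geq \varphi_m(w_j)+l_j=l_j\geq l(v)$.

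For the equality I prove, by simultaneous induction on the subtrees $T_u$ (from the leaves upward), the two statements: (L) if $\sigma(u)\geq 1$ then $\varphi_m(u)\geq D(u)$, with $\varphi_m(u)=D(u)$ and $\sigma(u)=1$ whenever $\#S(u)=1$; and (U) if $\sigma(u)\leq -1$ and $\#S(u)=1$ then $\varphi_m(u)\leq D(u)$. Both base cases (a smooth segment down to one leaf) are immediate from concavity and the terminal slope $+1$. For the inductive step at a trivalent $u$ with unique nearest leaf reached through the child $c_0$ (so $\#S(c_0)=1$ and $D(u)=|uc_0|+D(c_0)$): if $\sigma(c_0)\leq -1$ then (U) gives $\varphi_m(u)=\varphi_m(c_0)-|\sigma(c_0)||uc_0|\leq D(c_0)-|uc_0|<D(u)$, contradicting the lower bound in (L); hence $\sigma(c_0)\geq 1$, and (L) applied to $c_0$ gives $\sigma(c_0)=1$, $\varphi_m(c_0)=D(c_0)$, so $\varphi_m(u)=D(u)$. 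The other child $c_1$ must have $\sigma(c_1)\leq -1$, for otherwise (L) would give $\varphi_m(u)\geq D(c_1)+|uc_1|>D(u)$ (the strict inequality by uniqueness of the nearest leaf), a contradiction; feeding the three values $s_{\varphi_m}(u,\cdot)$ into the slope sum then forces $\sigma(u)\leq 1$, so $\sigma(u)=1$. Statement (U) is propagated similarly, using only the nearest-leaf child. Applying (L) to $u_1$ gives $\sigma(u_1)=1$, i.e.\ $s_{\varphi_m}(v,e)=-1$, and $\varphi_m(v)=|vu_1|+D(u_1)=l(v)$.

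The main obstacle is the inductive step just described, specifically ruling out the configuration in which $\varphi_m$ increases from $u$ into the branch carrying the nearest leaf (the ``peaking'' case $\sigma(c_0)\leq -1$, where $u$ is a local minimum along that branch). This is exactly the point that forces one to carry the \emph{upper} bound (U) alongside the lower bound (L), and it is where the hypotheses $\#S_v=1$ and degree $\leq 3$ are essential: uniqueness of the nearest leaf prevents a second branch from achieving the minimal distance, and trivalence makes the slope sum tight enough to conclude. The bookkeeping with signs of slopes, and with the noncontracted edges (those with $m\cdot f\neq 0$, on which $\varphi_m$ vanishes), is routine but must be tracked carefully.
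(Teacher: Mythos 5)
Your proof is correct, and it rests on the same two pillars as the paper's: the lower bound comes from the identical descent argument (the inequality $\sum_{e'}s_{\varphi_m}(u,e')\leq\deg(u)-2$ at an interior vertex with one ascending edge forces a descending edge, producing a monotone path down to $\partial\Gamma'$), and the equality exploits the strict gap between the unique shortest path and all the others. Where you genuinely diverge is in the bookkeeping for the second assertion. The paper walks up $\gamma_1$ from $w_1$, shows at each branch vertex $v'$ that every side tree carries positive inward slope --- using the upper bound $\varphi_m(v')\leq\dist(v',w_1)$ supplied by concavity along the already-cleared initial segment of $\gamma_1$ --- and then literally deletes the side tree via the pruning operation of Lemma \ref{pruning}, so that concavity on the resulting smooth path forces slope $1$ throughout. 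You never modify the graph: you carry the upper bound as the explicit inductive invariant (U) alongside (L) and close each branch vertex with the slope sum directly. The two arguments are equivalent in substance (your (U) is the paper's concavity bound, and your step ``$\sigma(c_1)\leq-1$'' is the paper's ``$s(v',e')>0$''), but your version avoids invoking the pruning lemma, which is a mild simplification. One point to tighten: in the inductive step for (L) you rule out $\sigma(c_0)\leq-1$ by ``contradicting the lower bound in (L)'' at $u$ itself, which is circular as written; it is legitimate only because that lower bound is the first assertion applied at $u$, established by the descent argument independently of the induction, and you should state this explicitly (as you should also note that $\#S(u)=1$ forces $\#S(c_0)=1$, which you use when citing the inductive hypotheses at $c_0$).
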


\begin{proof} 
Write $e=vv_1$.  Suppose $v_1\not\in\partial\Gamma'$. The inequality $\Delta(\varphi_m)+K_\Sigma\geq 0$ implies
\[\sum_{e'} s_{\varphi_m}(v_1,e')\leq\deg(v_1)-2,\]
and so there is an adjacent edge $e_1$ with $e_1\neq e$, $s_{\varphi_m}(v_1,e_1)<0$.  If $e_1=v_1v_2$ and $v_2\not\in\partial\Gamma'$ then there is an edge $e_2\neq e_1$ adjacent to $v_2$ with $s_{\varphi_m}(v_2,e_2)<0$.  Applying this argument repeatedly, we can find a path in $\Gamma'\setminus\Gamma$ from $v$ to some $w_i$ with all slopes negative.  From $\varphi_m(w_i)=0$, we obtain $\varphi_m(v)\geq l_i\geq l(v)$.

Now suppose $\#S_v=1$.  Without loss of generality, suppose $S_v=\{1\}$.   We claim that $\varphi_m$ is linear with slope $1$ along $\gamma_1$.  Let $v'$ be the first non-smooth
vertex along $\gamma_1$.  Since slopes are non-increasing along $\gamma_1$ from $w_1$ to $v'$, $\varphi_m(v')\leq\dist(v',w_1)$. Let $T'$ be a component of 
$T_v\setminus \{v'\}$ not containing $w_1$.  Let $S=\{i|w_i\in T'\}$.  Let $e'$ be the edge in $T'$ adjacent to $v'$.  If $s(v',e')<0$  then by the first part of this lemma, 
\[\varphi_m(v')\geq \min_{i\in S}(\dist(v',w_i))>\dist(v',w_1)\geq\varphi_m(v').\]
This contradiction proves that $s(v,e')>0$, and so by Lemma \ref{pruning}, we may remove $T'$ from $\Gamma'$.  By continuing this argument, we may eliminate all such trees $T'$.  Therefore, we may suppose that $v'$ is a smooth vertex of $\Gamma'$.  Continuing this argument over $\gamma_1$, we may suppose that $\gamma_1$ is a smooth path.  The slope of $\varphi_m$ must be non-increasing along $\gamma_1$.  Since it begins 
with slope $1$ and ends with a positive slope, $\varphi_m$ must be linear on $\gamma_1$.  It follows that $\varphi_m(v)=l(v)$.
\end{proof}

The proof of Proposition \ref{wellspaced} is completed by the following:

\begin{lemma} If $\Gamma$ is a bounded $2$-vertex connected graph with no $1$-valent vertices and $\varphi_m$ is $\cc_0$-ample on $\Gamma\subset\Gamma'\setminus\partial\Gamma'$ then 
the minimum $\min_{w\in\partial\Gamma'} \dist(w,\Gamma)$ must be achieved at least twice.
\end{lemma}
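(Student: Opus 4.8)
```latex
The plan is to combine the structural control provided by Lemma \ref{cycledeg} with the distance computation of Lemma \ref{decpaths}, showing that the minimum of $\dist(w,\Gamma)$ over $w\in\partial\Gamma'$ is forced to occur at least twice because each such minimum is realized precisely by a boundary point at which $\varphi_m$ attains its minimum value $h$ on $\Gamma$ with slope $-1$ entering $\Gamma$. First I would set $h=\min_{v\in\Gamma}\varphi_m(v)$ and recall that $\cc_0$-ampleness together with Lemma \ref{cycledeg} forces $\deg(D_{\varphi_m})\geq 2$, where $D_{\varphi_m}$ is supported at nodes $p_e$ for edges $e\in E_v^\partial$ with $s_{\varphi_m}(v,e)<0$ at boundary vertices $v$ where $\varphi_m(v)=h$. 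Thus there are, counted with the multiplicities $-s_{\varphi_m}(v,e)$, at least two incoming negative-slope edges at such vertices. The goal is to translate each of these contributions into a boundary point $w\in\partial\Gamma'$ realizing the global minimum distance to $\Gamma$.

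The key chain of implications runs as follows. Since each vertex has degree $2$ or $3$ and $\varphi_m$ satisfies condition (1) of Theorem \ref{maintheorem}, Lemma \ref{slopesums} gives that slopes are non-increasing along smooth segments and that the slope entering $\Gamma$ along any edge is at most $1$ in absolute value at a trivalent branching point; in particular a contribution to $D_{\varphi_m}$ has multiplicity exactly $1$, so the degree condition $\deg(D_{\varphi_m})\geq 2$ genuinely requires at least two distinct incoming edges with negative slope. For each such edge at a vertex $v\in\partial\Gamma$ with $\varphi_m(v)=h$, I would invoke Lemma \ref{decpaths}: since $s_{\varphi_m}(v,e)<0$ we get $\varphi_m(v)\geq l(v)$, and tracing a descending path back through the forest $\Sigma\setminus\Gamma$ (using that $\Sigma\setminus\Gamma$ is a forest) reaches a boundary point $w_i\in\partial\Gamma'$ with $\varphi_m(w_i)=0$, whence $l_i=\dist(w_i,\Gamma)\leq\varphi_m(v)=h$. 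Conversely, the minimum $\min_{w\in\partial\Gamma'}\dist(w,\Gamma)$ bounds $h$ from below, since $\varphi_m$ is non-negative with slopes of absolute value at most $1$ along the descending smooth paths, so that for any boundary vertex $v$ of $\Gamma$ one has $h=\varphi_m(v)\leq\dist(w,\Gamma)$ for the nearest $w$. These two inequalities pin $h$ to equal the minimum distance, and force that each of the two contributions to $D_{\varphi_m}$ lies on a geodesic of this minimal length.

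The main obstacle I expect is the bookkeeping that converts ``two contributions to the divisor'' into ``two distinct boundary points $w$.'' A priori two negative-slope edges contributing to $D_{\varphi_m}$ might, after pruning via Lemma \ref{pruning}, trace back through the forest to the same leaf $w\in\partial\Gamma'$, which would give only one point. To rule this out I would argue that distinct edges $e$ incident to $\Gamma$ lie in distinct trees of $\Sigma\setminus\Gamma$ (or in distinct branches of a single tree), so the descending paths furnished by Lemma \ref{decpaths} terminate at distinct boundary points; the $2$-vertex-connectedness and absence of $1$-valent vertices guarantee that the cycle structure of $\Gamma$ does not let two such contributions collapse onto a single incoming twig. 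Once this separation is established, the two boundary points each realize $\dist(w,\Gamma)=h=\min_{w\in\partial\Gamma'}\dist(w,\Gamma)$, completing the proof. I would handle the degenerate case where a single vertex $v$ receives both contributions by noting that then $v$ would need degree at least $4$, contradicting the degree $2$ or $3$ hypothesis, so the two negative-slope edges are forced to sit at two different boundary vertices of $\Gamma$.
```
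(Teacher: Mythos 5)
There is a genuine gap, and it sits exactly where you flagged the ``main obstacle.'' Your argument hinges on the claim that every contribution to $D_{\varphi_m}$ has multiplicity exactly $1$, i.e.\ that an edge of $\Gamma'\setminus\Gamma$ meets $\partial\Gamma$ with slope at least $-1$. That is false in general: where two branches of a tree in $\Sigma\setminus\Gamma$ merge at a trivalent vertex $u$, with $\varphi_m$ rising at rate $1$ from two boundary points into $u$, the inequality $\sum_{e\ni u}s_{\varphi_m}(u,e)\leq\deg(u)-2$ only forces the slope on the third edge (toward $\Gamma$) to be at most $3$. So a single edge arriving at a single trivalent vertex $v\in\partial\Gamma$ can carry slope $-2$ or $-3$ and by itself satisfy $\deg(D_{\varphi_m})\geq 2$. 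Your fallback -- that concentrating both contributions at one vertex would force $\deg(v)\geq 4$ -- fails for the same reason, and your ``converse'' bound $h\leq\dist(w,\Gamma)$, which you justify by ``slopes of absolute value at most $1$ along the descending paths,'' is unjustified as well. The only place a slope of exactly $-1$ is available is the $\#S_v=1$ clause of Lemma \ref{decpaths}, which is precisely the case your argument does not isolate.

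There is a second, related gap: even granting two distinct contributing edges at distinct vertices $v_1,v_2$ with $\varphi_m(v_i)=h$, Lemma \ref{decpaths} only gives the one-sided inequality $\varphi_m(v_i)\geq l(v_i)$, so the boundary points you reach need not realize the \emph{global} minimum $\min_{w\in\partial\Gamma'}\dist(w,\Gamma)$ -- that minimum could be attained in a tree hanging off a third vertex $v_3$ whose outgoing edge has positive slope and hence never enters your analysis. The paper sidesteps both problems by arguing contrapositively: assume the minimum distance is achieved only once, so that $F=\{v\}$ and $\#S_v=1$ for the unique closest boundary vertex $v$ of $\Gamma$; then Lemma \ref{decpaths} forces $s_{\varphi_m}(v,e)=-1$ and $\varphi_m(v)=l$, while every other $w\in\partial\Gamma$ either has positive outgoing slope or satisfies $\varphi_m(w)\geq l(w)>l\geq h$, hence cannot support $D_{\varphi_m}$; this pins $\deg(D_{\varphi_m})\leq 1$, contradicting Lemma \ref{cycledeg}. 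To repair your write-up you would essentially have to restructure it into that case analysis ($\#F\geq 2$, else $\#S_v\geq 2$, else contradiction) rather than trying to read two minimal-distance boundary points directly off the divisor.
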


\begin{proof}
By Lemma \ref{cycledeg}, we must have $\deg(D_{\varphi_m})\geq 2$.   Let 
$l=\min_{v\in\partial\Gamma} l(v)$ and $F=\{v\in\partial\Gamma|l(v)=l\}$.  If $\#F\geq 2$ then we 
are done, so we may suppose $F=\{v\}$.  If $\#S_v\geq 2$, we are also done, so we may suppose 
$\#S_v=1$.  By the previous lemma, $s(v,e)=-1$ and $\varphi_m(v)=l(v)=l$.  For $w\in\partial\Gamma\setminus\{v\}$ and $e\in E(\Gamma')\setminus E(\Gamma)$ adjacent to $w$,
we have either $s_{\varphi_m}(w,e)>0$ or
\[\varphi_m(w)\geq l(w)>l=\varphi_m(v).\]
In either case, we can conclude that $w$ does not contribute to $D_{\varphi_m}$.  Consequently, $\deg(D_{\varphi_m})=1$ which is a contradiction.
\end{proof}

\section{New Obstructions}
In this section, we prove Theorem \ref{weakwellspaced} and give an example of a graph whose lifting is obstructed by Theorem \ref{maintheorem} but not by any other known lifting conditions.  We begin with the proof of Theorem \ref{weakwellspaced}:

\begin{proof}
Suppose that $\partial\Gamma'$ is a single trivalent vertex $v$.  By balancing, only one edge $e$ at this vertex can map into $H$.  Now let $\Gamma$ be a cycle in $\Gamma'$ such that the following expression is minimized:
\[h_\Gamma=\min_{v'\in V(\Gamma)} \varphi(v').\]
By Theorem \ref{maintheorem} and Lemma \ref{cycledeg}, the divisor $D_{\varphi_m}$ on $\Gamma$ satisfies $\deg(D_{\varphi_m})\geq 2$.  We first observe that there is at most one path from points of $D_{\varphi_m}$ to $v$ on which $\varphi_m$ is decreasing.  If there was more than one path, they would have non-empty intersection containing $e$.  Then one could construct a new cycle $\Gamma_2$ from segments of $\Gamma$ and the paths for which $h_{\Gamma_2}<h_\Gamma$.

By applying the inequality
\[\sum_{e'} s_{\varphi_m}(v',e')\leq\deg(v')-2\]
as in the proof of Lemma \ref{decpaths}, we can construct paths from points of $D_{\varphi_m}$ to $v$ on which $\varphi_m$ is decreasing.  This implies that $D_{\varphi_m}$ must consist of a single point with multiplicity at least $2$.  The inequality above together with the uniqueness of the path shows that $\varphi_m$ must decrease with slope at most $-2$ along the path we construct from $D_{\varphi_m}$ to $v$.  Now, since $\varphi_m$ is equal to $0$ on the edges of $v$ different from $e$, the above inequality gives $s_{\varphi_m}(v,e)\leq \deg(v)-2=1$.  This contradiction proves the lemma.
\end{proof}

Now, we give a tropical curve that does not satisfy the conditions of Theorem \ref{maintheorem} but to which Propositions \ref{wellspaced} and \ref{weakwellspaced} do not apply.
We claim that there is a balanced parameterized graph $h:\Sigma\rightarrow\R^3$ and that there is a rational hyperplane $H$ such that $h^{-1}(H)$ is the following graph where all edges have multiplicity $1$:
\begin{center}
   \begin{texdraw}
       \drawdim cm  \relunitscale 0.5
       \linewd 0.03
        \move(-1 -1) \fcir f:0 r:0.1 \lvec (0 0)
         \fcir f:0 r:0.1
        \lvec(1 0) \fcir f:0 r:0.1
        \move(-1 1) \fcir f:0 r:0.1 \lvec(0 0) \fcir f:0 r:0.1
        \move(1 0) \fcir f:0 r:0.1 \lvec(2 2) \fcir f:0 r:0.1
         \fcir f:0 r:0.1
        \lvec(3 2.7)\fcir f:0 r:0.1
        \lvec(4 2.7)\fcir f:0 r:0.1
        \lvec(5 2.7)\fcir f:0 r:0.1
        \lvec(6 2.7)\fcir f:0 r:0.1
        \lvec(7 2.7)\fcir f:0 r:0.1
        \lvec(8 2.7)\fcir f:0 r:0.1
        \lvec(9 2.7)\fcir f:0 r:0.1
        \lvec(10 2.7)\fcir f:0 r:0.1
        \lvec(11 2.7)\fcir f:0 r:0.1
        \move(3 2.7)\fcir f:0 r:0.1
        \lvec(3 1.3)\fcir f:0 r:0.1
        \lvec(4 1.3)\fcir f:0 r:0.1
        \lvec(5 1.3)\fcir f:0 r:0.1
        \lvec(6 1.3)\fcir f:0 r:0.1
        \lvec(7 1.3)\fcir f:0 r:0.1
        \lvec(8 1.3)\fcir f:0 r:0.1
        \lvec(9 1.3)\fcir f:0 r:0.1
        \lvec(10 1.3)\fcir f:0 r:0.1
        \lvec(11 1.3)\fcir f:0 r:0.1
        \move(3 1.3)\fcir f:0 r:0.1
        \lvec(2 2)\fcir f:0 r:0.1
        \move(1 0)\fcir f:0 r:0.1
        \lvec(2 -2)\fcir f:0 r:0.1
        \lvec(3 -2.7)\fcir f:0 r:0.1
        \lvec(4 -2.7)\fcir f:0 r:0.1
        \lvec(5 -2.7)\fcir f:0 r:0.1
        \lvec(6 -2.7)\fcir f:0 r:0.1
        \lvec(7 -2.7)\fcir f:0 r:0.1
        \lvec(8 -2.7)\fcir f:0 r:0.1
        \lvec(9 -2.7)\fcir f:0 r:0.1
        \lvec(10 -2.7)\fcir f:0 r:0.1
        \lvec(11 -2.7)\fcir f:0 r:0.1
        \move(3 -2.7)\fcir f:0 r:0.1
        \lvec(3 -1.3)\fcir f:0 r:0.1
        \lvec(4 -1.3)\fcir f:0 r:0.1
        \lvec(5 -1.3)\fcir f:0 r:0.1
        \lvec(6 -1.3)\fcir f:0 r:0.1
        \lvec(7 -1.3)\fcir f:0 r:0.1
        \lvec(8 -1.3)\fcir f:0 r:0.1
        \lvec(9 -1.3)\fcir f:0 r:0.1
        \lvec(10 -1.3)\fcir f:0 r:0.1
        \lvec(11 -1.3)\fcir f:0 r:0.1
        \move(3 -1.3)\fcir f:0 r:0.1
        \lvec(2 -2)
        \move(1 0)
        \lvec(2 0)
         \lvec(3 -.7)\fcir f:0 r:0.1
        \lvec(4 -.7)\fcir f:0 r:0.1
        \lvec(5 -.7)\fcir f:0 r:0.1
        \lvec(6 -.7)\fcir f:0 r:0.1
        \lvec(7 -.7)\fcir f:0 r:0.1
        \lvec(8 -.7)\fcir f:0 r:0.1
        \lvec(9 -.7)\fcir f:0 r:0.1
        \lvec(10 -.7)\fcir f:0 r:0.1
        \lvec(11 -.7)\fcir f:0 r:0.1
        \move(3 -.7)\fcir f:0 r:0.1
        \lvec(3 .7)\fcir f:0 r:0.1
        \lvec(4 .7)\fcir f:0 r:0.1
        \lvec(5 .7)\fcir f:0 r:0.1
        \lvec(6 .7)\fcir f:0 r:0.1
        \lvec(7 .7)\fcir f:0 r:0.1
        \lvec(8 .7)\fcir f:0 r:0.1
        \lvec(9 .7)\fcir f:0 r:0.1
        \lvec(10 .7)\fcir f:0 r:0.1
        \lvec(11 .7)\fcir f:0 r:0.1
        \move(3 .7)\fcir f:0 r:0.1
        \lvec (2 0)
        \fcir f:0 r:0.1
         \htext(.3 .2){\footnotesize $a$}
         \htext(1.4 .2){\footnotesize $c$}
         \htext(1.2 1.1){\footnotesize $b$}
         \htext(1.1 -1.2){\footnotesize $d$}

     \end{texdraw}     
\end{center}
In fact, it is straightforward to embed this graph in a plane $H$ so that it is balanced and the quadrivalent vertex is indecomposable.  One may add pairs of unbounded edges to the boundary of this graph in $\R^3$ to ensure that it is balanced.  This example does not satisfy Nishinou's one-bouquet condition which is required for his higher genus necessary and sufficient condition to apply.

We claim that such a graph cannot  be the tropicalization of a map of a smooth curve of genus $3$.  Since each edge is given multiplicity $1$ and each vertex is indecomposable, the only parameterization of this graph is the identity.  Moreover, any genus $3$ lift of this curve must be maximally degenerate.
 Let us suppose that we have a function $\varphi_m$ meeting the conditions of Theorem \ref{maintheorem}.  We now apply the inequality,
 \[\sum_e s_{\varphi_m}(v,e)\leq\deg(v)-2\]
 of Lemma \ref{slopesums}.
We orient the edges not part of any cycle towards the nearest cycle. 
Because the slope of $\varphi_m$ on the two edges pointing towards edge $a$ is at most $1$,
slope of $\varphi_m$ on $a$ is at most $3$.  The slopes of $\varphi_m$ on $b$,$c$, and $d$ must sum to at most $5$.  Consequently, we may suppose that $\varphi_m$ has slope at most $1$ on edge $b$. Let $\Gamma$ be the cycle intersecting $b$.  The other paths from $\Gamma$  to $\partial\Gamma'$ are too long for $\varphi_m$ to have positive slope on them and for $\varphi_m|_\Gamma$ to be minimized along their intersection with $\Gamma$.  It follows that they cannot contribute to $D_{\varphi_m}$, and $D_{\varphi_m}$ must have degree at most $1$ on $\Gamma$.  
Consequently, $D_{\varphi_m}$ cannot have a non-constant section on $(C_\Gamma)_0$ for $\Gamma$, and $\varphi_m$ cannot be $\cc_0$-ample on $\Gamma$ in $h^{-1}(H)$.

\section{Toric Schemes, Log Structures, and Tropicalization} \label{s:tslst}

In this section, we review  the construction of the toric scheme $\cp$ over $\O$ from a rational polyhedral subdivision $\Xi$ of $\R^n$ \cite{KKMS,NS,Spe05}, background about log structures \cite{FKDef,KKLog}, and a suitable notion of parameterized tropicalization \cite{NS}.

Recall that $\O=\C[[t]]$.  
 Let $(\K^*)^n$ be an algebraic torus and $M=\Hom((\K^*)^n,\K^*)$ be its character lattice and $N=\Hom(\K^*,(\K^*)^n)$ be its one-parameter subgroup lattice.  Let $M_\R$ and $N_\R$ be $M\otimes\R$ and $N\otimes\R$, respectively.

\begin{definition} A complete rational polyhedral complex in $\R^n$ is a collection 
$\Xi$ of finitely many convex rational polyhedra
$P \subset \R^n$ whose minimal faces are vertices in $\Q^n$ such that 
\begin{enumerate}
\item If $P \in \Xi$ and $P^{\prime}$ is a face of $P$, then $P^{\prime}$
is in $\Xi$,
\item If $P,P^{\prime} \in \Xi$ then $P \cap P^{\prime}$ is a face of both
$P$ and $P^{\prime}$, and
\item The union $\bigcup_{P\in\Xi} P$  is equal to $\R^n$.
\end{enumerate}
\end{definition}

Given a $\Xi$ as above, we can construct a fan $\tXi$ in 
$\R^n \times \R_{\geq 0}$ as follows: for each $P \in \Xi$ let
$\tilde{P}$ be the closure in $\R^n \times \R_{\geq 0}$ of the set
$$\{(x,a) \subset \R^n \times \R_{>0} : \frac{x}{a} \in P\}.$$
Then $\tilde{P}$ is a rational polyhedral cone in $\R^n \times \R_{\geq 0}$.  Its 
facets come in two types:
\begin{enumerate}
\item cones of the form $\tilde{P}^{\prime}$, where $P^{\prime}$ is a facet of $P$, and
\item the cone $P_0 = \tilde{P} \cap (\R^n \times \{0\})$.
\end{enumerate}
We let $\tXi$ be the collection of cones of the form $\tilde{P}$ and $P_0$ for $P$ in $\Xi$.
It is a rational polyhedral fan in $\R^n \times \R_{\geq 0}$ by Corollary 3.12 of \cite{BGS}.  Note that
$\Xi = \tXi \cap (\R^n \times \{1\})$.  Let the fan $\Xi_0$ be given by $\Xi_0=\tXi \cap (\R^n \times \{0\})$.

Let $X(\tXi)$ be the toric variety associated to the fan $\tXi$.  
Projection from $\R^n \times \R_{\geq 0}$ to $\R_{\geq 0}$ induces a map of 
fans from $\tXi$ to the fan
$\{0,\R_{\geq 0}\}$ associated to $\A^1$.  This gives rise to a flat morphism of
toric varieties $X(\tXi) \rightarrow \A^1$.   Let $\iota: \Spec \O \rightarrow \A^1$
be the inclusion induced by $\Z[t]\rightarrow \O$.
We will use $\cp=X(\Xi)$ denote the scheme over $\O$ given by $X(\tXi)\times_{\A^1}\O$.

We summarize results of~\cite{NS} concerning this construction:
\begin{enumerate}
\item The general fiber $X(\tXi) \times_{\Spec \O} \Spec \K$ is isomorphic
to the toric variety over $\K$ associated to $\Xi_0$.
\item If $\Xi$ is {\em integral}, i.e. the vertices of every polyhedron in $\Xi$
lie in $\Z^n$, then the central fiber $X(\tXi)_0 = X(\tXi) \times_{\Spec \O} \Spec \k$
is reduced.   
\item There is an inclusion-reversing bijection
between closed torus orbits in $X(\tXi)_\k$ and polyhedra $P$ in $\Xi$; the irreducible 
components of $X(\tXi)_\k$ correspond to vertices in $\Xi$; the intersection of a collection
of irreducible components corresponds to the smallest polyhedron in $\Xi$ containing all
of their vertices.
\end{enumerate}

If $\cp$ is reduced, $\cp_0$ is a union of toric strata corresponding to the subdivision $\Xi$.  For a cell $P\in\Xi$, $\cu_P=\Spec \Z[\tilde{P}^\vee]\times_{\A^1}\O$ is a toric open set of $\cp$.   Let $\partial\cp_\K$ be the union of proper torus orbit closures of $\cp_\K$.

We now introduce log structures closely following the exposition of \cite{FKDef}.

\begin{definition} A {\em pre-log structure} on a scheme $X$ is a pair $(\cm,\alpha)$ where $\cm$ is a sheaf of monoids $\cm$ and $\alpha$ is a homomorphism $\alpha:\cm\rightarrow\O_X$.  If $\alpha$ induces an isomorphism $\alpha^{-1}(\O_X^*)\cong \O_X^*$, then we say $(\cm,\alpha)$ is a {\em log structure}.   A {\em log scheme} $(X,\cm,\alpha)$ is a scheme $X$ with a log structure $(\cm,\alpha)$.  We may denote such a log scheme by $X^\dagger$.
\end{definition}

A pre-log structure $(\cm,\alpha)$ on $X$ canonically induces a log structure $(\cm^a,\alpha^a)$ on $X$ by adjoining units to $\cm$.    Let $\cm^{\gp}$ be the sheaf of groups formed by groupifying the sheaf of monoids $\cm$.  We call sections of $\cm^{\gp}$ {\em generalized units}.

\begin{definition} A {\em morphism of log schemes} $f:(X,\cm,\alpha)\rightarrow(Y,\cn,\beta)$ is a pair $(f,\phi)$ where $f$ is a morphism of schemes $f:X\rightarrow Y$ and $\phi$ is a homomorphism of sheaves of monoids on $X$, $\phi:f^{-1}\cn\rightarrow\cm$ such that the following diagram commutes:
\[\xymatrix{
f^{-1}\cn\ar[d]\ar[r]^\phi&\cm\ar[d]\\
f^{-1}\O_Y\ar[r]&\ \O_X.
}\]
\end{definition}
Log structures glue in a way similar to schemes.

\begin{example} The trivial log structure on $X$ is $\cm=\O^*_X$ with $\alpha=1_{\O_X}$.
\end{example}

\begin{example} For a commutative ring $A$ and a monoid $P$, there is a natural pre-log structure on $\Spec A[P]$ given by $\alpha:P\rightarrow A[P]$. The induced log scheme $(\Spec A[P],P^a,\alpha^a)$ is called the {\em canonical monoid log structure}.  The construction is functorial: if $\phi:P\rightarrow Q$ is a homomorphism of monoids, there is an induced morphism of log schemes $\phi^*:(\Spec A[Q],Q^a)\rightarrow (\Spec A[P],P^a)$.  
\end{example}

\begin{example} For $\Delta$, a rational fan in $\R^n$, there is a natural log structure on the toric variety $X(\Delta)$.  For each cone $\sigma\in\Delta$, there is a toric affine open $U_\sigma=\Spec \k[M\cap\sigma^\vee]$ with the monoid log structure.  These log structures glue to give a log structure on $X(\Delta)$.
\end{example}

\begin{example} Since $\A^1$ is the toric variety associated to $\R_{\geq 0}$, it naturally has a log structure induced by $\alpha:\N\rightarrow \k[t]$ where $\alpha(n)=t^n$.  By considering $\Spec \O$ as the formal local neighborhood of the origin of $\A^1$, we obtain a log structure on $\Spec \O$  given by
$\alpha:\O^*\times\N \rightarrow \O$ where $\alpha(c,n)=ct^n.$
\end{example}

\begin{example} \label{localmodelformarkedpoints}
Given a smooth point $p$ on a curve $C$, there is a log structure on $C$ that is trivial away from $p$ and near $p$ is induced by $\N\rightarrow\O_C$ taking $n\mapsto u^n$ where $u$ is a uniformizer for $p$.  This is called the {\em model structure for marked points.}
\end{example}

\begin{example} If $\Xi$ is a rational polyhedral complex in $\R^n$ then $X(\tXi)$ has a log structure.  Moreover, the natural map $X(\tXi)\rightarrow \Spec \O$ is a morphism of log schemes.
\end{example}

\begin{example} \label{localmodelfornodes} Pick $l\in\N$ and let $P=(0,l)\subset\R$.  Set $\sigma=\tilde{P}\subset\R\times\R_{\geq 0}$.  Let $\pi:\R\times\R_{\geq 0}\rightarrow \R_{\geq 0}$ be projection on the second factor. The cone $\sigma^\vee$ is spanned by the vectors $(-1,l),(1,0)$.  The monoid $\sigma^\vee\cap M$ is generated by the elements $f_1=(-1,l),f_2=(1,0),e=(0,1)$ under the relation $f_1+f_2=l\cdot e$.  If we write these generators $x_1,x_2,t$ then we have an explicit description of the morphism $\pi^\vee:\k[\R_{\geq 0}^\vee]\rightarrow \k[\sigma^\vee\cap M]$ as the inclusion $\k[t]\hookrightarrow\k[x_1,x_2,t]/(x_1x_2-t^l)$.  The log morphism is induced by 
\[\xymatrix{
\N e\ar[r]\ar[d]&\N f_1\oplus\N f_2\oplus\N e/(f_1+f_2=l\cdot e)\ar[d]\\
\k[t]\ar[r]&\k[x_1,x_2,t]/(x_1x_2-t^l).
}\]
By base-changing $\k[t]$ to $\O$, we get a formal local model for nodes over $\O$.  This log structure is the {\em model structure for nodes}.
\end{example}

A marked semistable family of curves has a canonical log structure that is trivial away from nodes and marked points and has the respective model structures near nodes and marked points.

\begin{definition} Let $f:X^\dagger=(X,\cm)\rightarrow Y^\dagger=(Y,\cn)$ be a morphism of log schemes.  The sheaf of log differentials of $X^\dagger$ over $Y^\dagger$ is
\[\Omega^1_{X^\dagger/Y^\dagger}=[\Omega^1_{X/Y}\oplus (\O_X\otimes_\Z \cm^{\gp})]/\ck\]
where $\ck$ is the $\O_X$-submodule generated by
\[(d\alpha(a),0)-(0,\alpha(a)\otimes a)\ \text{and}\ (0,1\otimes\phi(b))\]
for all $a\in\cm$ and $b\in f^{-1}\cn$.
\end{definition}

One should view log differentials as adjoining to the ordinary differentials elements of the form $\DLog(a)=\frac{\text{d}(\alpha(a))}{\alpha(a)}$ for $a\in\cm$.  Both $\cp^\dagger\rightarrow\O^\dagger$ and  $\cc^\dagger\rightarrow \O^\dagger$ are log smooth morphisms \cite[Ex 4.6,4.7]{FKDef}.  Consequently, the log differentials $\Omega^1_{\cp^\dagger/\O^\dagger}$ and $\Omega^1_{\cc^\dagger/\O^\dagger}$ are locally free sheaves.

\begin{example}
Let $\cc$ be a regular semistable family over $\O$.  Give $\cc$ the model log structure near nodes and marked points.  We consider the log differentials in $\Omega^1_{\cc^\dagger/\O^\dagger}$.
If $u$ is a uniformizer of a marked point, the log differentials include the $1$-form $\frac{\text{d}u}{u}$.  If $\O[x_1,x_2]/(x_1x_2-t)$ is a local model for a node in $\cc$, we have the log differentials in $\Omega^1_{\cc^\dagger/\O^\dagger}$ given by $\frac{\text{d}x_1}{x_1},\frac{\text{d}x_2}{x_2}$ subject to  
$\frac{\text{d}x_1}{x_1}+\frac{\text{d}x_2}{x_2}=0$.  In fact, $\Omega^1_{\cc^\dagger/\O^\dagger}$ is an invertible sheaf.  Near marked points and nodes, it is generated by $\frac{\text{d}u}{u}$ and $\frac{\text{d}x_1}{x_1}$, respectively.  
One can view log differentials on $\cc_0$ as $1$-forms that are allowed simple poles at marked points and simple poles at nodes such that the residues on the branches sum to $0$.  In fact, if $\cc$ is a marked semistable curve, $\Omega^1_{\cc^\dagger/\O^\dagger}$ is the relative dualizing sheaf twisted by the divisor of marked points.  Nodes and marked points of the special fiber $\cc_0$ are called {\em special points}.  

The specialization of the invertible sheaf $\Omega^1_{\cc^\dagger/\O^\dagger}$ to the dual graph is particularly important.   Let $\Sigma$ be the dual graph of $\cc$.  Now, $\Omega^1_{\cc^\dagger/\O^\dagger}$ pulls back to a component $C_v$ of the normalization $\widetilde{\cc}_0$  as an invertible sheaf of degree $\deg(v)+2g(v)-2$.  Consequently, $\Omega^1_{\cc^\dagger/\O^\dagger}$ specializes to 
\[K_\Sigma=\sum_v (\deg(v)+2g(C_v)-2)(v)\]
which is the canonical divisor.\end{example}

\begin{example}
Let $\cp=X(\Xi)$ be a toric scheme over $\O$.  For $m$ a character of $(\K^*)^n$, $\omega_m=\DLog(z^m)$ is a regular log differential. 
\end{example}

We need to make use of the following theorem due to Nishinou-Siebert \cite{NS} about completing families of maps of curves.

\begin{definition} Let $X(\Delta)$ be a toric variety.  A stable map $f:C\rightarrow X(\Delta)$ is {\em torically transverse} if $f:C\rightarrow X(\Delta)$ satisfies
\begin{enumerate}
\item $f^{-1}((\G_m)^n)\subset C$ is dense, and
\item $f(C)\subset X(\Delta)$ is disjoint from strata of codimension greater than $1$.
\end{enumerate}
\end{definition}

\begin{theorem}\cite{NS} \label{goodmodel} Let $f:C^*\rightarrow (\K^*)^n$ be a map of a smooth curve to an algebraic torus.   Then after a possible base-change $\O[t^{\frac{1}{N}}]\rightarrow \O$, there is a completion of $(\K^*)^n$ to a toric scheme $\cp=X(\Xi)$, a completion of $C^*$ to a proper stable family $\cc$ over $\O$, and an extension $f:\cc\rightarrow\cp$ such that
\begin{enumerate}
\item $f_0:\cc_0\rightarrow\cp_0$ has the property that for every irreducible component $\cp'_0\subset\cp_0$, $f_0:f_0^{-1}(\cp'_0)\rightarrow\cp_0$ is a torically transverse stable map.
 
\item There exists disjoint sections $\sigma_1,\dots,\sigma_k:\Spec \O\rightarrow\cc^{\sm}$ such that $f^{-1}(\partial\cp_{\K})=\bigsqcup \sigma_i(\Spec \K)$.

\item Near $\sigma_{e'}(\Spec \k)$, $\cp$ is formal locally modeled on $(\G_m)^{n-1}\times\A^1_\O$ and $f$ is modeled on $z\mapsto cu^{\mu(e')}$ where $z$ is a uniformizer for $\A^1$ at $0$, $u$ is a uniformizer for $\sigma_{e'}(\Spec \O)$ in $\cc$, $\mu(e')\in\N$ and $c$ is a unit.

\item \label{node} Near the intersection of two irreducible components of $\cp_0$ that is equal to $f(p_e)$
 for a node $p_e\in\cc_0$, $\cp$ is formal locally modeled on 
 $(\G_m)^{n-1}\times\left(\Spec \O[w_1,w_2]/(w_1w_2-t^{s(e)})\right)$ for $s(e)\in\N$, $\cc$ is modeled near $p_e$ on 
 $\Spec \O[x_1,x_2]/(x_1x_2-ct^{s(e)/\mu(e)})$, 
 $\mu(e)\in \N$ and $f$ is modeled on $f^*w_i=c_ix_i^{\mu(e)}$ where $\mu(e)\in\N$ and $c_i$ is a 
 unit.
\end{enumerate} 
Moreover, if $\cc$ is given the canonical log structure for marked semistable families, $f:\cc\rightarrow\cp$ is a log morphism where $\cp$ is given the log structure of a toric scheme.
\end{theorem}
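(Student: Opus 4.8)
The plan is to reduce the statement to combinatorics by building the target toric scheme from a polyhedral subdivision adapted to the tropicalization, producing the source family by semistable reduction, and then extending the map and verifying its local structure. First I would analyze the tropicalization of $f$. Composing $f$ with the coordinatewise valuation produces a tropical curve in $\R^n$; after choosing a semistable model of the abstract curve $C^*$ with dual graph $\Sigma$, the valuations of the generic points of the components of the central fiber determine where each vertex of $\Sigma$ should be sent, while the orders of vanishing of the characters $z^m$ across each node determine the slope and the weight $\mu(e)$ along each edge. This recovers the parameterized tropicalization $\Trop(f):\Sigma\rightarrow\R^n$, together with the unbounded edges recording where $f(C^*)$ approaches the toric boundary.

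Next I would choose a complete rational polyhedral subdivision $\Xi$ of $\R^n$ \emph{adapted} to $\Trop(f)$: one in which $\Trop(f)(\Sigma)$ lies in the $1$-skeleton, every vertex of $\Sigma$ maps to a vertex of $\Xi$, and every edge maps into a single edge of $\Xi$. Any subdivision can be refined to this form, and integrality of $\Xi$ can be arranged after the base change $\O[t^{1/N}]\rightarrow\O$, which on the tropical side rescales edge lengths so that vertices land at lattice points; this is the role of the integer $N$. Setting $\cp=X(\Xi)$ as in Section \ref{s:tslst}, the construction guarantees that the irreducible components of $\cp_0$ correspond to vertices of $\Xi$ and the codimension-$1$ strata to its edges, and that $\cp$ is proper over $\O$ since $\Xi$ is complete.

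For the source I would apply semistable reduction: after possibly enlarging $N$, the curve $C^*$ completes to a proper semistable family $\cc$ over $\O$ whose dual graph is exactly $\Sigma$. Since $\cp$ is proper over $\O$, the valuative criterion extends the a priori rational map from $\cc$ to $\cp$ to an honest morphism after blowing up $\cc$; these blow-ups correspond to subdividing the bounded edges of $\Sigma$, matching the $l$-fold subdivisions appearing elsewhere in the paper. The adaptedness of $\Xi$ is what forces the central-fiber map $f_0$ to be torically transverse: each component $C_v$ is carried into the dense torus of the stratum indexed by $\Trop(f)(v)$, each node is carried into the codimension-$1$ stratum indexed by the edge containing its image, and no higher-codimension stratum is met, so $f^{-1}(\partial\cp_\K)$ is a disjoint union of smooth sections $\sigma_i$.

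The main obstacle is precisely this extension-and-transversality step: guaranteeing that after finitely many subdivisions the map acquires the prescribed local monomial form. Here I would argue in the local toric charts. Near a marked point the section sits inside $(\G_m)^{n-1}\times\A^1_\O$ and the vanishing order of the relevant coordinate forces $f$ into the shape $z\mapsto c\,u^{\mu(e')}$; near a node one plays the local equation $x_1x_2=c\,t^{s(e)/\mu(e)}$ of $\cc$ against $w_1w_2=t^{s(e)}$ of $\cp$, reading $s(e)$ off the lattice length of the image edge and $\mu(e)$ off its weight, and checks that $f^*w_i=c_ix_i^{\mu(e)}$ is the unique monomial extension compatible with both degenerations. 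Finally, these explicit monomial models show that $f$ respects the canonical log structure on $\cc$ and the toric log structure on $\cp$, since the pullback of each generator $\DLog(z^m)$ then lies in the log structure of $\cc$; this proves that $f$ is a morphism of log schemes and completes the argument.
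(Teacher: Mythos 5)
First, note that the paper does not actually prove this theorem: it is quoted from Nishinou--Siebert, and the paper's entire ``proof'' is a two-line pointer to Proposition 6.3 of \cite{NS} (for the extension of $f$ to $\cc\rightarrow\cp$) and to the proof of Theorem 8.3 of \cite{NS} (for the local models at marked points and nodes). Your sketch reconstructs the broad strategy of that cited argument --- adapted subdivision $\Xi$, base change to make it integral, semistable/stable reduction for the source, extension of the map, local monomial models, compatibility of log structures --- and the overall architecture is the right one.

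The genuine gap is at the step you yourself flag as ``the main obstacle'' and then dispose of in one sentence: the claim that adaptedness of $\Xi$ forces $f_0$ to be torically transverse. This does not follow formally. Extending the rational map $\cc\dashrightarrow\cp$ by the valuative criterion only controls the images of the generic points of the components of $\cc_0$; resolving the remaining indeterminacy requires blowing up closed points of $\cc_0$, and the exceptional components so introduced can a priori land in torus orbits of codimension $\geq 2$, or force a further refinement of $\Xi$, which changes $\cp$ and hence the extension problem. One must show that the joint refinement of the model $\cc$ and the subdivision $\Xi$ terminates in a configuration where every component of $\cc_0$ meets only the open and codimension-one strata --- this iteration (carried out in \cite{NS} using the properness of the stack of stable maps to produce the limit, followed by an analysis of which strata the limit can meet) is precisely the content of Proposition 6.3 and is the part your proposal asserts rather than proves. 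Relatedly, your sketch fixes $\Sigma$ first and builds $\Xi$ afterwards, but the two must be refined together: the requirement that vertices of $\Sigma$ map to vertices of $\Xi$ and edges into edges is only achievable after subdividing both. The local models in (3) and (4), and the log-morphism statement, do then follow essentially as you describe once transversality is in hand, since $f^*w_i$ is pinned down by its vanishing locus (one branch of the node plus a multiple of the central fiber) and the constants $c_i$, $c$ are units for exactly that reason.
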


\begin{proof}
One uses Proposition 6.3 of \cite{NS} to extend the map $C^*$ to $\cc$.  The local models are produced in the proof of Theorem 8.3 of \cite{NS}.
\end{proof}

The base-change corresponds to rescaling $\R^n$ such that the vertices of $\Trop(f(C^*))$ have integral coordinates.
Here, $\Xi$ will be chosen to be a complete polyhedral subdivision of $\R^n$ such that $\Trop(f(C^*))$ is a union of polyhedra in $\Xi$.  Such a $\Xi$ exists by Theorem 2.2.1 of \cite{Spe05}.  The intersection of two irreducible components of $\cp_0$ in (\ref{node}) above corresponds to an edge of lattice length $s(e)$ in $\Xi$.  

This allows us to construct a map of the dual graph $\Trop(f):\Sigma\rightarrow N_\R=\R^n$ that we call the {\em parameterized tropicalization}.  This is essentially a rephrasing of Construction 4.4 of Nishinou-Siebert \cite{NS} and is also developed by Tyomkin \cite{Tyomkin}.  Pick a vertex $v_0\in V(\Sigma)$.  Let $x$ be a $\K$-point of $C^*$ specializing to a smooth point on the component $C_{v_0}$ in the central fiber.  Set $\Trop(f)(v_0)=\val(f(x))\in N_\R$ where $\val:(\K^*)^n\rightarrow N_\R$ is the valuation.
For $e\in E(\Gamma)^\bullet$ such that $f(p_e)$ is mapped to a smooth point of $\cp_0$, set $\Trop(f)$ to be constant on $e$.  For a bounded edge $e\cong[0,1]$ from $v_1$ to $v_2$ that is mapped to a singular point of $\cp_0$, we have a map $M\rightarrow\Z$
given by $m\mapsto\res_{p_e}(f^*{\frac{dz^m}{z^m}}\big|_{C_{v_1}})$.
This gives an element $n(e)\in N_\R$.  Define $\Trop(f)$ on $e$ by 
\[\Trop(f)(t)=\Trop(f)(v_1)+\left(\frac{s(e)}{\mu(e)}n(e)\right)t.\]
In the case where $\Trop(f)$ is constant on an edge $e$, we say the edge is {\em contracted}.  Note that otherwise, the vector along the edge in the parameterized tropicalization is the scalar multiple of the vector $n(e)$ by $\frac{s(e)}{\mu(e)}$ which is the thickness of the node.
Let $\sigma_{e'}$ be a marked point corresponding to $e'\cong [0,\infty)$ such that $\sigma_{e'}(\k)\in C_v$.  We have a similar map $M\rightarrow \R$ taking the residue of $f^*{\frac{dz^m}{z^m}}$ at $\sigma_{e'}$.  This gives $n(e')\in N$.  Define $\Trop(f)$ on $e'$ by
\[\Trop(f)(t)=\Trop(f)(v)+\frac{1}{\mu(e)}n(e')t.\]
This map is well-defined by as it constructs $\Trop(f)(\Sigma)$ as supported on $\Xi$.  In fact, the image of the parameterized tropicalization is the tropicalization of the curve $f(C^*)$, $\Trop(f)(\Sigma)=\Trop(f(C^*))$.

\begin{lemma} \label{l:parbal} $\Trop(f)$ satisfies the following balancing condition: if $v$ is a vertex of $\Sigma$ with bounded edges $e_1,\dots,e_k$ and unbounded edges $e'_1,\dots,e'_l$ then
\[\sum_{j=1}^k \frac{\mu(e_j)}{s(e_j)}(\Trop(f)|_{e_j}(1)-\Trop(f)|_{e_j}(0))+\sum_{j=1}^l \mu(e'_j)(\Trop(f)|_{e'_j}(1)-\Trop(f)|_{e'_j}(0))=0.\]
\end{lemma}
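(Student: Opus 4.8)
The plan is to reduce the vector balancing identity to the residue theorem on the component $C_v$. First I would pair both sides of the claimed identity with an arbitrary character $m\in M$, using the duality between $N$ and $M$; it then suffices to prove, for every $m$, the scalar identity obtained this way.

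Unwinding the construction of $\Trop(f)$ on edges, for a bounded edge $e$ oriented away from $v$ one has $\Trop(f)|_e(1)-\Trop(f)|_e(0)=\frac{s(e)}{\mu(e)}n(e)$, so that $\frac{\mu(e)}{s(e)}(\Trop(f)|_e(1)-\Trop(f)|_e(0))=n(e)$; likewise $\mu(e')(\Trop(f)|_{e'}(1)-\Trop(f)|_{e'}(0))=n(e')$ for an unbounded edge $e'$. Since, by definition, $\<m,n(e)\>=\res_{p_e}(\omega_m|_{C_v})$ and $\<m,n(e')\>=\res_{\sigma_{e'}}(\omega_m|_{C_v})$ with $\omega_m=f^*\DLog(z^m)$ and the branch selected by orienting away from $v$, the scalar identity to be proved is
\[\sum_{j=1}^k \res_{p_{e_j}}(\omega_m|_{C_v})+\sum_{j=1}^l \res_{\sigma_{e'_j}}(\omega_m|_{C_v})=0.\]
The bounded and unbounded edges adjacent to $v$ are exactly the nodes and marked points of $C_v$, i.e.\ the special points, so the left-hand side is the sum of the residues of $\omega_m|_{C_v}$ over all of its special points.

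I would then invoke the residue theorem on $C_v$. The component $C_v$ is a component of the normalization $\widetilde{\cc}_0$, hence a smooth complete curve over $\k=\C$, and $\omega_m$ is a regular section of the log differentials $\Omega^1_{\cc^\dagger/\O^\dagger}$, being the pullback of the regular log differential $\DLog(z^m)$ under the log morphism $f$. By the description of log differentials on $\cc_0$, the restriction $\omega_m|_{C_v}$ is therefore a meromorphic $1$-form whose poles are all simple and confined to the special points of $C_v$. Consequently the sum of its residues over the special points equals the sum of all of its residues, which vanishes by the residue theorem; this proves the scalar identity for every $m$ and hence the lemma.

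The main obstacle is the step confirming that $\omega_m|_{C_v}$ has no poles away from the special points, together with the bookkeeping of signs. The pole confinement rests on torical transversality in Theorem \ref{goodmodel}, which forces the generic point of $C_v$ into an open torus orbit where $z^m$ is a unit, combined with the local models at nodes ($f^*w_i=c_ix_i^{\mu(e)}$) and marked points ($z\mapsto cu^{\mu(e')}$) that exhibit $\omega_m$ as a regular log differential near the special points. Orienting each edge away from $v$ ensures we read the residue on $C_v$ rather than on the opposite branch, fixing the correct sign; and the degenerate case $\omega_m|_{C_v}\equiv 0$ (for instance a contracted component, or one mapping into the kernel of $z^m$) is harmless, since then every residue is zero.
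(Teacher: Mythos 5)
Your proof is correct and follows essentially the same route as the paper: pairing the vector identity with an arbitrary character $m$, identifying each term with the residue of $\omega_m=f^*\frac{dz^m}{z^m}$ restricted to $C_v$ at the corresponding special point, and concluding by the residue theorem. The extra care you take with pole confinement and degenerate (contracted) edges is sound and only makes explicit what the paper leaves implicit.
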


\begin{proof}
The quantity on the left is an element of $N_\R$.  Its evaluation on $m\in M$, is the sum of residues of $f^*{\frac{dz^m}{z^m}}$ on $C_v$.  This vanishes by the residue theorem.
\end{proof}

In the case where $f:C^*\rightarrow (\K^*)^n$ is a closed immersion and all the initial degenerations are reduced, this reproduces Speyer's notion of parameterized tropical curves \cite{Spe07}.

We perform $\left(\frac{s(e)}{\mu(e)}-1\right)$ blow-ups at $p_e$ to ensure that $\cc$ is a regular semistable model. This has the effect of subdividing $e$ into $s(e)/\mu(e)$ edges.  Likewise, we may also have to blow up nodes that are mapped to smooth points of $\cp$ and then subdivide to ensure a regular model.   In any case, we can ensure that any component of $\cc_0$ that is contracted by $f$ has at least two special points.

We give $\Sigma$ the structure of an abstract genus-marked weighted graph.  For  a vertex $v\in\Sigma$, set $g(v)=g(C_v)$.  Give an edge $e$ in $\Sigma$ multiplicity $\mu(e)$ if it is not contracted.  Give the edge multiplicity $0$ if it is contracted. Then, $\Trop(f)(e)$ points in the primitive integer direction $n(e)/\mu(e)$.  The weighted structure on $\Sigma$ is compatible with that of $\Trop(f(C^*))$ in the following sense:

\begin{lemma} The map $\Trop(f):\Sigma\rightarrow\Trop(f(C^*))$ is a tropical parameterization.
\end{lemma}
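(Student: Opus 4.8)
The plan is to verify that $\Trop(f):\Sigma\rightarrow\Trop(f(C^*))$ satisfies each of the four defining conditions of a tropical parameterization (Definition \ref{d:tparameterization}), treating the target graph as $\Sigma'=\Trop(f(C^*))$ with its balanced integral structure and $\Sigma$ equipped with the genus-marked weighted structure described just above the statement. First I would unwind what the map does on a single edge. For a bounded edge $e$ from $v_1$ to $v_2$ mapped to a singular point of $\cp_0$, the construction gives $\Trop(f)|_e(1)-\Trop(f)|_e(0)=\frac{s(e)}{\mu(e)}n(e)$, and since $n(e)/\mu(e)$ is declared to be the primitive integer direction $w(p(e))$ of the image edge, the displacement is $s(e)\cdot w(p(e))$. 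Because each edge carries the metric in which it has length $1$ while the image edge in $\Sigma'$ carries its lattice length, the map $p|_e$ is dilation by a factor equal to $\mu(e)$ once the lengths are normalized correctly; I would check this bookkeeping carefully, as it is the content of condition (\ref{par:length}).

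Condition (\ref{par:bal}), the balancing of $\Sigma$ in the combinatorial sense, is the step I expect to be the crux, and it is exactly where Lemma \ref{l:parbal} does the work. That lemma gives the balancing condition in the weighted form $\sum_j \frac{\mu(e_j)}{s(e_j)}(\Trop(f)|_{e_j}(1)-\Trop(f)|_{e_j}(0))=0$ (absorbing the unbounded-edge terms). Substituting the edge-displacement formula $\Trop(f)|_{e_j}(1)-\Trop(f)|_{e_j}(0)=\frac{s(e_j)}{\mu(e_j)}n(e_j)$ into Lemma \ref{l:parbal} collapses the factors $\frac{\mu(e_j)}{s(e_j)}\cdot\frac{s(e_j)}{\mu(e_j)}=1$, leaving $\sum_j n(e_j)=0$, i.e.\ $\sum_{e\ni v}\mu(e)w(p(e))=0$ since $n(e)=\mu(e)w(p(e))$. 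So the required combinatorial balancing is precisely a rewriting of Lemma \ref{l:parbal}; the main obstacle is making sure the contracted edges (multiplicity $0$) and the possible subdivisions from the blow-ups do not disturb this — but contracted edges contribute zero displacement and hence drop out, and subdivision preserves directions and displacements, so balancing is inherited.

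For condition (\ref{par:mult}), I would argue that the multiplicity of an edge $e'\in E(\Sigma')$ equals its lattice length times its primitive weight in $\Trop(f(C^*))$, and that the preimages $p^{-1}(e')$ are exactly the bounded edges of $\Sigma$ lying over $e'$, whose multiplicities $\mu(e)$ sum to $\mu(e')$ by the compatibility of the weighted structures noted in the paragraph preceding the lemma (this is how the weights on $\Sigma$ were defined to be consistent with those of $\Trop(f(C^*))$). Finally, condition (\ref{par:semistable}) — that a vertex all of whose edges are contracted has degree at least $2$ — follows immediately from the last sentence before the statement, where the blow-up/subdivision procedure was arranged precisely so that any component of $\cc_0$ contracted by $f$ carries at least two special points, hence the corresponding vertex has degree $\geq 2$. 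Assembling these four verifications completes the proof; the only genuinely substantive input is Lemma \ref{l:parbal}, and everything else is checking that the definitions of lengths, weights, and directions have been set up consistently.
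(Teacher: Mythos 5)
Your proof follows essentially the same route as the paper's: condition (1) is by construction, condition (2) is exactly Lemma \ref{l:parbal} after substituting the edge-displacement formula, and condition (4) comes from the arrangement that every contracted component of $\cc_0$ carries at least two special points. These three verifications match the paper's argument.

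The one weak spot is condition (3). You justify $\sum_{e\in p^{-1}(e')}\mu(e)=\mu(e')$ by appealing to ``the compatibility of the weighted structures noted in the paragraph preceding the lemma,'' but that compatibility \emph{is} the statement of the lemma --- the weights $\mu(e)$ on $\Sigma$ are defined from the local models at the nodes $p_e$, while $\mu(e')$ is the multiplicity of an edge of the tropicalization $\Trop(f(C^*))$, defined independently of the parameterization. So as written this step is circular. The paper closes the gap by invoking the definition of the multiplicity of an edge of a tropical curve as the length of the associated initial degeneration (following Speyer): the initial degeneration over a point of $e'$ is the union of the components meeting the corresponding nodes, and its length is the sum of the local intersection multiplicities $\mu(e)$ over the preimage edges. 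You need some version of this identification to make condition (3) a theorem rather than an assumption.
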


\begin{proof}
Condition (\ref{par:length}) is by construction.
Condition (\ref{par:bal}) is Lemma \ref{l:parbal}.  Condition (\ref{par:mult}) follows from the definition of the multiplicity of an edge of a tropical curve as the length of the associated initial degeneration \cite{Spe05}.  To prove condition (\ref{par:semistable}), note that any vertex $v\in V(\Sigma)$ with only contracted edges corresponds to a component $C_v$ of $\cc_0$ on which $f$ is constant.  By construction, we ensured that $C_v$  contains at least two special points.  
\end{proof}

\section{Specialization of Sections of Line Bundles}

In this section, we will let $\cul$ be a line bundle over a regular semistable family $\cc$ with dual graph $\Sigma$.  Let $s$ a rational section of $\cul$.  To $s$, we will associate a piecewise linear function $\varpi_s$ on $\Sigma$, called a vanishing function.  

\begin{definition} For $s$ a rational section of $\cul$, we say that $s$ has {\em $\K$-rational zeroes and poles} if the divisor $(s)$ on $C_{\overline{\K}}$ is supported on $C(\K)$.
\end{definition}

Note that $\K$-rational points of $\cc$ specialize to smooth points of the central fiber $\cc_0$.  Let $\pi:\widetilde{\cc}_0\rightarrow \cc_0$ be the normalization morphism.  We use $C_v$ to refer to a  component of $\tilde{C}_0$.  

\begin{definition}    Let $s$ be a rational section of $\cul$ with $\K$-rational zeroes and poles.  Define the {\em vanishing function} of $s$ to be $\varpi_s:\Sigma\rightarrow\R\cup\{\infty\}$ by setting $\varpi_s(v)$ to be the multiplicity of $C_v$ in the divisor $(s)$.
For $v\in V(\Sigma)$, let $s_v=\pi^*\left(\frac{s}{t^{\varpi(v)}}\right)|_{C_v}$.  For $e\in E(\Sigma)$ adjacent to a vertex $v$, let $\ord_{p_e}(s_v)$ be the order of vanishing of $s_v$ at $p_e\in C_v$. 
  We extend $\varpi_s$ linearly on bounded edges.  If $e$ is an unbounded edge adjacent to a vertex $v$, set the slope of $\varphi_s$ on $e$ away from $v$ to be equal to $\ord_{p_e}(s_v)$.  If $s=0$, set $\varpi_s=\infty$.
\end{definition}

Note that if we apply a base-change $\O\rightarrow \O[t^{\frac{1}{l}}]$ and blow-up the nodes to produce the model $\cc^l$, the vanishing function changes in a predictable way.  If $\tau:\cc^l\rightarrow\cc$, and we fix a homeomorphism $\Sigma\cong\Sigma_l$ that rescales the edges by a factor of $l$, then $\varpi_{\tau^*s}=l\varpi_{s}$

The divisor of $s$ as a section of $\cul$ satisfies
\[(s)=\overline{D}+\sum \varpi_s(v)C_v\]
where $\overline{D}$ is a horizontal divisor.
The vanishing function can therefore be seen as encoding the vanishing of $(s)$ on components of the central fiber together with the components of $\overline{D}$ that specialize to marked points $p_{e'}$ in the central fiber.
Note that $\varpi_s(v)$ is the order of vanishing of $s$ on the generic point of $C_v$ in the central fiber.  If $s$ is a regular section of $\cul$, then $\varpi_s\geq 0$.  In the case where $s$ is a section of the restriction of $\cul$ to the generic fiber, we define $\varpi_s$ by extending $s$ as a rational section on $\cc$.  

We now relate the poles of $s_v$ to $\varpi_s$.  

\begin{lemma} \label{slopes} Suppose $s$ has $\K$-rational zeroes and poles.  Let $e\in E(\Sigma)$ be an edge adjacent to $v\in V(\Sigma)$.  Then $\ord_{p_e}(s_v)$ is equal to the slope of $\varpi_s$ along $e$ away from $v$.  Consequently, $\varpi_s$ has integer slopes.
\end{lemma}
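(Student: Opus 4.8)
The plan is to reduce the statement to a purely local computation in the formal neighborhood of the special point $p_e$, where the regular semistable model has a completely explicit form. For an unbounded edge the assertion holds by the very definition of the slope of $\varpi_s$ there, so I would concentrate on a bounded edge $e$ between vertices $v_1$ and $v_2$, with $C_{v_1}$, $C_{v_2}$ the two branches meeting at the node $p_e$. Since $e\cong[0,1]$ with unit length, the slope of the linearly-extended $\varpi_s$ along $e$ away from $v_1$ is $\varpi_s(v_2)-\varpi_s(v_1)$, so the claim to establish is precisely
\[
\ord_{p_e}(s_{v_1}) = \varpi_s(v_2)-\varpi_s(v_1).
\]

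First I would trivialize $\cul$ in a neighborhood of $p_e$, so that $s$ becomes a rational function, and invoke regularity of $\cc$: as recorded in Section 2, regularity forces $l=1$, so the completed local ring $\widehat{\O}_{\cc,p_e}$ is isomorphic to $\C[[x,y]]$ with $t=xy$, where $\{y=0\}$ cuts out $C_{v_1}$, $\{x=0\}$ cuts out $C_{v_2}$, and $x$ is a uniformizer of $C_{v_1}$ at $p_e$. The key input here is that the horizontal part $\overline{D}$ of the divisor $(s)=\overline{D}+\sum_v \varpi_s(v)C_v$ does not pass through $p_e$: because $s$ has $\K$-rational zeroes and poles, these specialize to \emph{smooth} points of $\cc_0$, whereas $p_e$ is a node. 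Hence in $\widehat{\O}_{\cc,p_e}$ the divisor of $s$ is supported on the two coordinate axes only.

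Next, using that $\C[[x,y]]$ is a unique factorization domain, I would write $s=u\,x^{a}y^{b}$ with $u$ a unit and $a,b\in\Z$, and identify the exponents with the vanishing orders along the branches, $a=\ord_{\{x=0\}}(s)=\varpi_s(v_2)$ and $b=\ord_{\{y=0\}}(s)=\varpi_s(v_1)$. Substituting $t^{\varpi_s(v_1)}=(xy)^{\varpi_s(v_1)}$ gives
\[
\frac{s}{t^{\varpi_s(v_1)}} = u\,x^{\varpi_s(v_2)-\varpi_s(v_1)},
\]
and restricting to $C_{v_1}=\{y=0\}$, where $u$ remains a unit and so contributes nothing to the order, yields $\ord_{p_e}(s_{v_1})=\varpi_s(v_2)-\varpi_s(v_1)$, as desired. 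Integrality of the slopes is then immediate, since orders of vanishing are integers and $\varpi_s$ is linear on each edge with this slope; one also sees the computation is symmetric in $v_1,v_2$, the two orders being negatives of one another, consistent with a single linear function on $e$.

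The main obstacle I anticipate is justifying the clean monomial form $s=u\,x^{a}y^{b}$ together with the matching of exponents to $\varpi_s(v_1),\varpi_s(v_2)$. This rests on two points that must be stated carefully: the regularity of $\cc$ at the node, giving the $\C[[x,y]]$ model with $t=xy$ (where the factorization argument and the unit status of $u|_{y=0}$ take place), and the avoidance of $p_e$ by $\overline{D}$, which is what guarantees no other factors enter near the node. Everything beyond this is bookkeeping with the relation $t=xy$.
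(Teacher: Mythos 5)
Your proof is correct and follows essentially the same route as the paper's: both reduce to the bounded-edge identity $\ord_{p_e}(s_{v_1})=\varpi_s(v_2)-\varpi_s(v_1)$, write $(s)=\overline{D}+\sum_v\varpi_s(v)C_v$, use that the horizontal part misses the node because $\K$-rational zeroes and poles specialize to smooth points, and then divide by $t^{\varpi_s(v_1)}$ and restrict to $C_{v_1}$. Your explicit $\C[[x,y]]$, $t=xy$ computation is just an unpacking of the paper's one-line assertion that near $p_e$ the quotient has divisor $(\varpi_s(v_2)-\varpi_s(v_1))C_{v_2}$.
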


\begin{proof}
For $e$, an unbounded edge, the statement follows by definition.  For a bounded edge
$e=v_1v_2$, we must show
\[\ord_{p_e}(s_{v_1})=\varpi_s(v_2)-\varpi_s(v_1).\]
Write $(s)=\overline{D}+\sum_v \varpi(v)C_v$ on $\cc$ where $\overline{D}$ is a horizontal divisor.
Then $\frac{s}{t^{\varpi_s(v_1)}}$ has divisor $\overline{D}+\sum_v (\varpi_s(v)-\varpi_s(v_1))C_v$. 
 Since $p_e$ is a node, no component of $\overline{D}$ intersects the central fiber in $p_e$. Consequently, near $p_e$, $\frac{s}{t^{\varpi_s(v_1)}}$ has the divisor 
 $(\varpi_s(v_2)-\varpi_s(v_1))C_{v_2}$.  By restricting to $C_{v_1}$, we see $\ord_{p_e}(s_{v_1})=\varpi_s(v_2)-\varpi_s(v_1)$.
\end{proof}

Let $\Lambda$ be the divisor on $\Sigma$ given by 
\[\Lambda=\sum_{v\in V(\Sigma)} \deg(\pi^*\cul|_{C_v})(v).\]

\begin{lemma} \label{positivity} If $s$ is a section of $\cul$ that is regular on the generic fiber $C$ and has $\K$-rational zeroes, then
$\Delta(\varpi_s)+\Lambda\geq 0$.  That is, $\varpi_s\in L(\Lambda)$.
\end{lemma}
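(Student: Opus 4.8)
The plan is to verify the inequality $\Delta(\varpi_s)+\Lambda\geq 0$ one vertex at a time, reducing it to a degree count for the restriction $s_v$ of a suitable rescaling of $s$ to each component $C_v$. Fix $v\in V(\Sigma)$. By definition of the Laplacian, the coefficient of $(v)$ in $\Delta(\varpi_s)$ is $-\sum_{e\ni v}s_{\varpi_s}(v,e)$, and Lemma \ref{slopes} identifies each slope $s_{\varpi_s}(v,e)$ with $\ord_{p_e}(s_v)$. Hence it suffices to prove the single inequality
\[\sum_{e\ni v}\ord_{p_e}(s_v)\leq \Lambda(v)=\deg(\pi^*\cul|_{C_v}).\]

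First I would observe that $s_v=\pi^*\!\left(\frac{s}{t^{\varpi_s(v)}}\right)\big|_{C_v}$ is a nonzero rational section of the line bundle $\pi^*\cul|_{C_v}$ on the smooth projective curve $C_v$, so the degree of its divisor equals $\deg(\pi^*\cul|_{C_v})=\Lambda(v)$. Writing $(s)=\overline{D}+\sum_w\varpi_s(w)C_w$ with $\overline{D}$ horizontal, the rescaled section $\frac{s}{t^{\varpi_s(v)}}$ has divisor $\overline{D}+\sum_w(\varpi_s(w)-\varpi_s(v))C_w$, whose vertical part vanishes to order $0$ along $C_v$; thus its restriction to $C_v$ is a genuine nonzero rational section, justifying the degree identity.

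The heart of the argument is to show that the divisor $(s_v)$ on $C_v$ is effective away from the nodes $p_e$. This is where both hypotheses enter. Because $s$ is regular on the generic fiber, its zero divisor there is effective, so its closure $\overline{D}$ is an effective horizontal divisor; and because $s$ has $\K$-rational zeroes, the points of $\overline{D}$ specialize to smooth points of $\cc_0$, so $\overline{D}$ meets $C_v$ only at smooth, non-nodal points (as already noted in the proof of Lemma \ref{slopes}, no component of $\overline{D}$ passes through a node). At any smooth point of $C_v$ lying off $\overline{D}$ and off the nodes, $\frac{s}{t^{\varpi_s(v)}}$ is a regular nonvanishing local section of $\cul$, so $s_v$ is regular and nonvanishing there. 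Consequently the only possible poles of $s_v$ occur at the nodes $p_e$, and the non-nodal part of $(s_v)$ equals $\overline{D}|_{C_v}$, which is effective.

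Putting this together, I would decompose $\deg(s_v)=\deg(\overline{D}|_{C_v})+\sum_{e\ni v}\ord_{p_e}(s_v)$; since $\deg(\overline{D}|_{C_v})\geq 0$ and $\deg(s_v)=\Lambda(v)$, this yields $\sum_{e\ni v}\ord_{p_e}(s_v)\leq\Lambda(v)$, which is exactly the claim. The main obstacle is the effectivity step in the third paragraph: pinning down the behavior of $s_v$ at every non-nodal point of $C_v$ and confirming that regularity on the generic fiber together with $\K$-rationality of the zeroes are precisely what force $\overline{D}|_{C_v}\geq 0$. Everything else is a degree count on a smooth curve combined with the slope identification already supplied by Lemma \ref{slopes}. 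One minor technical point to handle with care is a loop edge at $v$, where the node $p_e$ has two preimages on $C_v$ under normalization and thus contributes two half-edge terms to both the Laplacian and the degree count; the decomposition above remains valid term by term in that case as well.
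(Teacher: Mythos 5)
Your proposal is correct and follows essentially the same route as the paper: a vertex-by-vertex degree count on $C_v$, using the decomposition $(s)=\overline{D}+\sum_w\varpi_s(w)C_w$, the effectivity of the horizontal part $\overline{D}$ (from regularity on the generic fiber plus $\K$-rationality forcing specialization to smooth points), and the identification of nodal orders with slopes from Lemma \ref{slopes}. The one bookkeeping point to fix is the treatment of marked points: since the Laplacian sum $\sum_{e\ni v}$ runs over unbounded edges as well, and the slope there is by definition $\ord_{p_{e'}}(s_v)$, which is exactly the multiplicity of $\overline{D}|_{C_v}$ at the marked point $p_{e'}$, your identity $\deg(s_v)=\deg(\overline{D}|_{C_v})+\sum_{e\ni v}\ord_{p_e}(s_v)$ double-counts those multiplicities; the paper avoids this by splitting $\overline{D}\cdot C_v$ into an unmarked smooth part $D_{v,\sm}\geq 0$ plus the marked-point contributions, and you should replace $\overline{D}|_{C_v}$ by $D_{v,\sm}$ in the final display, after which the argument goes through verbatim.
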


\begin{proof}
Write $(s)=D$ for an effective divisor $D$ on $C$.  On $\cc$ we have
\[(s)=\overline{D}+\sum_v \varpi_s(v)C_v.\]
We check the inequality on each vertex $w$ of $\Sigma$.  The rational section $\frac{s}{t^{\varpi_s(w)}}$ does not vanish identically on $C_v$.  It has divisor 
\[\left(\frac{s}{t^{\varpi_s(w)}}\right)=\overline{D}+\sum_v (\varpi_s(v)-\varpi_s(w))C_v\]
which is linearly equivalent to $(s)$.  Therefore, $\Lambda(s)(w)=\rho((s))(w)=\rho\left((\frac{s}{t^{\varpi_s(w)}})\right)(w)$.
If $e'_1,\dots,e'_k$ are the unbounded edges adjacent to $v_1$, we may decompose the intersection product
\[\overline{D}\cdot C_{w}=D_{w,\sm}+\sum_i m_i p_{e'_i}\]
where $D_{w,\sm}$ is an effective divisor supported on unmarked smooth points of $C_w$.  Consequently, since $m_i=\ord_{p_{e'_i}}(s_w)$ and $\varpi_s(v)-\varpi_s(w)=\ord_{p_e}(s_w)$ for $e=vw$, we have
\begin{eqnarray*}
\rho\left((\frac{s}{t^{\varpi_s(w)}})\right)(w)&=&\deg((\overline{D}+\sum_v ((\varpi_s(v)-\varpi_s(w))C_v)\cdot C_w)\\
&=&\deg(D_{w,\sm})+\sum_i m_i+\sum_{e=vw}(\varpi_s(v)-\varpi_s(w))\\
&=&\deg(D_{w,\sm})-\Delta(\varpi_s)(w)\geq -\Delta(\varpi_s)(w).
\end{eqnarray*}
\end{proof}

This above is lemma is analogous to a step in the proof of the specialization lemma in \cite{B}.  It is also closely related to the Poincar\'{e}-Lelong formula on Berkovich curves \cite{BPR}.

We will need the following lemma to find an algebraic extension $\K'$ of $\K$ to ensure that the zeroes of all elements of a linear system are $\K'$-rational.

\begin{lemma} \label{KRationalZeroes} Let $L$ be a line bundle on $C$ defined over $\K$.  Let $V=\Gamma(C,L)$ be the sections of $L$.  Then there exists a finite field extension $\K'/\K$ such that all zeroes of any non-zero  $s\in V_\K$ are $\K'$-rational.
\end{lemma}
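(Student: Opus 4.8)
The plan is to reduce the statement to the classification of finite extensions of $\K=\C((t))$, exploiting the fact that the zero divisor of every nonzero section has the same degree $d=\deg L$, so that the needed field does not depend on the section. We may assume $d\geq 0$, since otherwise $V_\K=0$ and the claim is vacuous. First I would observe that for a nonzero $s\in V_\K$ the divisor of zeros $(s)_0$ is an effective divisor on $C$, defined over $\K$, of degree exactly $d$. Writing $(s)_0=\sum_i n_i P_i$ with $P_i$ the closed points in its support and $E_i$ their residue fields, we have $\sum_i n_i[E_i:\K]=d$, so in particular $[E_i:\K]\leq d$ for every $i$. A point of the support over $\bar\K$ is $\K'$-rational exactly when the corresponding $E_i$ embeds into $\K'$; hence it suffices to produce a single finite extension $\K'$ that contains every extension of $\K$ of degree at most $d$.

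Second, I would invoke the structure theory of finite extensions of $\K$. Since $\C$ is algebraically closed of characteristic $0$, every finite extension of $\K=\C((t))$ is totally and tamely ramified, and there is a unique extension of each degree $m$, namely $\K_m=\C((t^{1/m}))$; moreover $\K_m\subseteq\K_n$ if and only if $m\mid n$. Consequently each $E_i$, being a finite extension of degree $m_i\leq d$, is $\K$-isomorphic to $\K_{m_i}$ and embeds into $\K_N$, where $N=\operatorname{lcm}(1,2,\dots,d)$.

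Finally, I would set $\K'=\K_N=\C((t^{1/N}))$, a finite extension of $\K$ whose degree $N$ depends only on $d=\deg L$ and not on the chosen $s$. After base change to $\K'$ every residue field $E_i$ embeds into $\K'$, so each point of $(s)_0$ becomes $\K'$-rational; since $N$ is uniform in $s$, this single $\K'$ works for all nonzero $s\in V_\K$, as required. The only real content is the uniformity, and it is immediate here precisely because the fixed degree $d$ bounds every residue field that can occur; the sole (very mild) obstacle is recording the classification of extensions of $\C((t))$. If one preferred to avoid that structure theory, one could instead form the incidence variety $\mathcal{Z}\subset\P(V)\times C$ of pairs $(s,P)$ with $s(P)=0$ and argue that the finite projection $\mathcal{Z}\to\P(V)$ has fibers of degree $\leq d$, but over $\K$ the classification above makes the bound explicit and the argument cleaner.
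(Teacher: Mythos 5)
Your proof is correct and follows essentially the same route as the paper: both arguments rest on the observation that $\deg L=d$ uniformly bounds the zeroes of every section, and on the fact that $\K=\C((t))$ has a unique finite extension of each degree (equivalently $\Gal(\overline{\K}/\K)\cong\hat{\Z}$), so a single $\K'=\K[t^{1/N}]$ works for all $s$. The only difference is cosmetic: the paper phrases the uniformity via the $\hat{\Z}$-action on the set of zeroes and takes $N=d!$, whereas you work with residue fields of closed points and take the slightly sharper $N=\operatorname{lcm}(1,\dots,d)$.
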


\begin{proof}
$L$ has finite degree on $C$, say $d$.  If $s\neq 0$, $(s)_{\overline{\K}}\subset C(\overline{\K})$ consists of at most $d$ points.  This gives a homomorphism 
\[\hat{\Z}=\Gal(\overline{\K}/\K)\rightarrow \Aut((s)_{\overline{\K}}).\]
Since each Galois-orbit of a zero of $s$ has at most $d$ elements, 
every element of $\Aut((s)_{\overline{\K}})$ has order dividing $d!$.  Consequently, the 
subgroup $d!\hat{\Z}\subset\hat{\Z}$ acts trivially on $(s)_{\overline{\K}}$.
 But $d!\hat{\Z}$ has fixed field $\K'=\K[t^\frac{1}{d!}]$.  Consequently, the Galois group $\Gal(\overline{\K}/\K')$ acts trivially on $(s)_{\overline{\K}}$.  It follows that the zeroes of $s$ are $\K'$-rational.  This choice of $\K'$ was independent of $s$.  
\end{proof}

In general, given any field $\K$ and a finite number of sections, we can set $\K'$ to be a field containing the fields of definition of the zeroes of the sections.  In the above lemma, we have infinitely many sections and had to make use of the fact that $\K=\C((t))$.

\begin{example} Consider a map $f:C^*\rightarrow (\K^*)^n$ to an algebraic torus.  By applying Theorem \ref{goodmodel}, we may produce a model $\cc$ completing $C^*$ and an extension $f:\cc\rightarrow\cp$ mapping to a toric scheme.  This induces a parameterized tropicalization $\Trop(f):\Sigma\rightarrow N_\R$.
 For $m\in M$, the pullback of the character $f^*z^m$ is a rational function on $\cc$.  Let $\psi$ be the vanishing function of $f^*z^m$ considered as a section of the trivial bundle.  
By construction, the only zeroes and poles of $z^m$ are at the marked points $\sigma_i$ which are $\K$-rational.   We claim that 
 \[\psi(t)=\<m,\Trop(f)(t)\>\]
 for all $t\in\Sigma$.   
$\Trop(f)$ is first defined with reference to an index vertex $v_0$.  The vanishing of $z^m$ on $C_{v_0}$ is exactly $\<m,\val(f(x))\>$ for a $\K$-point $x$ specializing to a smooth unmarked point of $C_v$.  Now, we check that $\psi(t)$ and $\<m,\Trop(f)(t)\>$ agree on each edge.  The slope of $\<m,\Trop(f)(t)\>$ on an edge $e$ at $v$ is the residue of $f^*\frac{dz^m}{z^m}|_{C_v}$ at $p_e$.  This is the order of vanishing of $(f^*z^m)_v=\frac{z^m}{t^{\psi(v)}}|_{C_v}$ at $p_e$.  This, in turn, is equal to the slope of $\psi(t)$ by Lemma \ref{slopes} for bounded edges and by definition for unbounded edges.
\end{example}

We now relate the vanishing functions of two sections and their sum.  
Recall that $\Sigma^\bullet$ is the subgraph of $\Sigma$ consisting of all vertices and bounded edges.

\begin{lemma} \label{addingsections} Given sections $s_1,s_2$ such that $s_1,s_2,s_1+s_2$ have $\K$-rational zeroes  then the restrictions of the vanishing functions to $\Sigma^\bullet$ satisfy
$\varpi_{s_1}\oplus\varpi_{s_2}\oplus\varpi_{s_1+s_2}=0.$
\end{lemma}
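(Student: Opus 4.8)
The plan is to verify the tropical ultrametric-type identity $\varpi_{s_1}\oplus\varpi_{s_2}\oplus\varpi_{s_1+s_2}=0$ vertex by vertex and edge by edge, reducing everything to the behavior of the valuation of sections on components of the central fiber. Recall that for each vertex $v$, $\varpi_{s}(v)$ is the multiplicity of $C_v$ in the divisor $(s)$, i.e. the order of vanishing of $s$ in $t$ at the generic point of $C_v$. Writing $\mathcal{L}$ locally near the generic point of $C_v$ as $\O_{\cc,\eta_v}$, the key observation is that for the discrete valuation $\ord_{\eta_v}$ attached to the component $C_v$ (normalized so that $\ord_{\eta_v}(t)=1$, since $\cc$ is regular and $\cc_0$ is reduced), we have exactly $\varpi_{s}(v)=\ord_{\eta_v}(s)$. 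So the statement at each vertex is the elementary valuation inequality
\[
\ord_{\eta_v}(s_1+s_2)\geq \min\bigl(\ord_{\eta_v}(s_1),\ord_{\eta_v}(s_2)\bigr),
\]
with equality forced unless the two orders coincide. This is precisely the content of $a\oplus b\oplus c=0$ in the sense defined earlier (the minimum of the three values is attained at least twice): if the two minimal values among $\varpi_{s_1}(v),\varpi_{s_2}(v)$ are unequal, the minimum is attained by exactly one of them and also by $\varpi_{s_1+s_2}(v)$; if they are equal, the minimum is already attained twice among the first two.

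Next I would handle the bounded edges, since the claim is about the restriction to $\Sigma^\bullet$ and the functions are determined on bounded edges by their values at the endpoints together with linearity (Lemma~\ref{slopes}). Because each $\varpi_{s_i}$ is affine-linear on a bounded edge $e=v_1v_2$ with endpoint values satisfying the vertex identity above, I would argue that the tropical sum of three affine-linear functions on an interval, each of which satisfies the ``min attained twice'' condition at both endpoints, satisfies it at every interior point. The cleanest way is to note that $\min(\varpi_{s_1},\varpi_{s_2},\varpi_{s_1+s_2})$ is concave and piecewise linear on $e$, while the condition $\varpi_{s_1}\oplus\varpi_{s_2}\oplus\varpi_{s_1+s_2}=0$ at a point means no single one of the three strictly realizes the minimum there; one checks directly that the locus where exactly one function is strictly smallest is open, and being empty at both endpoints together with the affine-linearity of each function forces it to be empty on all of $e$. (Restricting to $\Sigma^\bullet$ conveniently lets me avoid the unbounded edges, where slopes are defined by $\ord_{p_{e'}}(s_v)$ and the endpoint-at-infinity makes the argument slightly more delicate.)

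**The main obstacle** I expect is not the tropical bookkeeping but a subtle point of divisor theory: the vertex identity requires $\varpi_{s_1+s_2}$ to actually be well-defined, i.e. $s_1+s_2$ must itself have $\K$-rational zeroes and must not vanish identically on $\cc$—but this is supplied by hypothesis. The genuinely careful step is justifying that $\varpi_{s}(v)$ equals $\ord_{\eta_v}(s)$ for the normalized valuation, and that this valuation satisfies the ultrametric inequality despite $s_1,s_2$ being sections of a line bundle $\mathcal{L}$ rather than functions. I would resolve this by trivializing $\mathcal{L}$ in an affine neighborhood of the generic point $\eta_v$: after choosing a local generator, $s_1,s_2,s_1+s_2$ become honest elements of the local ring $\O_{\cc,\eta_v}$, their tropical sum identity is just the valuation inequality there, and this is independent of the choice of trivialization because all three sections are scaled by the same unit. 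One must also confirm that subtracting off the common horizontal part of the divisors (the $\overline{D}$ in the decomposition $(s)=\overline{D}+\sum_v\varpi_s(v)C_v$) does not interfere, which follows since the horizontal divisor meets the central fiber only at smooth points and contributes nothing to the multiplicity of $C_v$ itself.
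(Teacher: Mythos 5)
Your vertex argument is sound and is essentially the paper's: after trivializing $\cul$ near the generic point $\eta_v$ of $C_v$, the identity at $v$ is just the ultrametric inequality for $\ord_{\eta_v}$, with equality forced unless $\ord_{\eta_v}(s_1)=\ord_{\eta_v}(s_2)$. The gap is in your treatment of bounded edges. It is simply not true that three affine functions on $[0,1]$ which satisfy the ``minimum attained at least twice'' condition at both endpoints satisfy it at interior points: take affine functions with values $(0,0,1)$ at one endpoint and $(0,1,0)$ at the other, i.e.\ slopes $0$, $1$, $-1$. At each endpoint the minimum $0$ is attained twice, yet at every interior point only the constant function attains the minimum. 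Your openness observation does not close this hole --- an open subset of $(0,1)$ can be nonempty while avoiding both endpoints, and in this example the locus where exactly one function is strictly smallest is all of $(0,1)$.

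What is missing is a geometric input beyond the endpoint values, and this is exactly where the paper's proof does real work. Using $\varpi_s=\varpi_{-s}$ to treat $s_1$, $s_2$, $s_1+s_2$ symmetrically, the only nontrivial case at an endpoint $v$ of $e=vw$ is $\varpi_{s_1}(v)=\varpi_{s_2}(v)<\varpi_{s_1+s_2}(v)$. Then $(s_1)_v+(s_2)_v=\bigl(\tfrac{s_1+s_2}{t^{\varpi_{s_1}(v)}}\bigr)\big|_{C_v}=0$, so $(s_1)_v=-(s_2)_v$ on $C_v$, hence $\ord_{p_e}((s_1)_v)=\ord_{p_e}((s_2)_v)$; by Lemma \ref{slopes} the slopes of $\varpi_{s_1}$ and $\varpi_{s_2}$ along $e$ coincide, so $\varpi_{s_1}(w)=\varpi_{s_2}(w)$, and the vertex identity at $w$ then gives $\varpi_{s_1}(w)=\varpi_{s_2}(w)\le\varpi_{s_1+s_2}(w)$. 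This extra equality is precisely what rules out the counterexample above, and only with it does affine-linearity propagate the identity to the interior of the edge. You need to add this step; the remainder of your outline then goes through.
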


\begin{proof}
We first check on vertices.  If $\varpi_{s_1}(v)\neq\varpi_{s_2}(v)$ then $\varpi_{s_1+s_2}(v)=\min(\varpi_{s_1}(v),\varpi_{s_2}(v))$.  If $\varpi_{s_1}(v)=\varpi_{s_2}(v)$ then $\varpi_{s_1+s_2}(v)\geq\min(\varpi_{s_1}(v),\varpi_{s_2}(v))$.  

Now, we check bounded edges.  Let $e=vw$ be a bounded edge.  Since $\varpi_s=\varpi_{-s}$, we may treat $\{s_1,s_2,s_1+s_2\}$ symmetrically.  First consider the case that $\varpi_{s_1}(v)=\varpi_{s_2}(v)=\varpi_{s_1+s_2}(v)$.  We may also suppose $\varpi_{s_1}(w)=\varpi_{s_2}(w)\leq\varpi_{s_1+s_2}(w)$.  Then the conclusion immediately follows.  Now suppose that $\varpi_{s_1}(v)=\varpi_{s_2}(v)<\varpi_{s_1+s_2}(v)$.  Then 
\[(s_1)_v+(s_2)_{v}=\left(\frac{s_1+s_2}{t^{\varpi_{s_1}(v)}}\right)\Big|_{C_v}=0.\]
Therefore, $\ord_{p_e}((s_1)_v)=\ord_{p_e}((s_2)_v)$.  By Lemma \ref{slopes}, $\varpi_{s_1}(w)=\varpi_{s_2}(w)$.  Since the result is true on vertices, $\varpi_{s_1}(w)=\varpi_{s_2}(w)\leq \varpi_{s_1+s_2}(w)$.
\end{proof}

An analogous statement holds for the vanishing functions on $\Sigma$ not just their restrictions to $\Sigma^\bullet$ but this requires choosing a model $\cc$ such that the only zeroes of $s$ that specialize to the points $\sigma_i(\Spec \k)$ of  $\cc_0$ are contained in the sections $\sigma_i(\Spec \O)$.  This model would depend on $s$.  For the sake of picking a single model, we have chosen just to consider the restriction to $\Sigma^\bullet$ . 

\section{Vanishing functions of log differentials}

Let $(\cc,\cm)$ be a regular semistable family of curves with the canonical log structure over $\O$.   Given a section $s\in\Gamma(\cc,\cm^{\gp})$ which we call a generalized unit, one can study the vanishing function of the log differential $\DLog(s)$ considered as a section of the sheaf of log differentials.  This vanishing function is tightly constrained and, in turn, imposes conditions on the vanishing function of $s$.

We will work under the following assumptions:
\begin{enumerate}
\item $s$ is a section of $\cm^{\gp}$, and
\item $\DLog(s)$ has $\K$-rational zeroes.
\end{enumerate}

Let $\psi$ be the vanishing function of $s$ considered as a rational section of the trivial bundle on $\cc$, and let $\varphi$ be the vanishing function of $\omega=\DLog(s)$ considered as a section of $\Omega^1_{\cc^\dagger/\O^\dagger}$.  By Lemma \ref{slopes}, $\varphi$ is an element of $L(K_\Sigma)$.

\begin{lemma} \label{c0ample} Let $c\in\R$.  Let $\Gamma'=\psi^{-1}(c)$ and $\Gamma$ be a bounded, connected subgraph contained in the interior of $\Gamma'$ (considered as a subspace of $\Sigma$).  Then $\varphi$ is $\cc_0$-ample on $\Gamma$ in $\Gamma'$.
\end{lemma}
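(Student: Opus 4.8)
The plan is to produce the required meromorphic function on $(\cc_\Gamma)_0$ as the leading $t$-adic Taylor coefficient of $s$ along $\Gamma$, and to read off its divisor from the slopes of $\varphi$ via Lemma~\ref{slopes}. First I would exploit that $\psi\equiv c$ on $\Gamma'$. Since $\Gamma$ lies in the interior of $\Gamma'$, every edge (bounded or unbounded) emanating from a vertex $v\in\Gamma$ begins inside $\Gamma'$, so by Lemma~\ref{slopes} the function $\psi$ has slope $0$ there and $u:=s/t^{c}$ has neither a zero nor a pole at the corresponding special point of $C_v$. As $s$ is a generalized unit it is moreover a genuine unit away from the special locus, so $u_v:=u|_{C_v}$ has no zeros or poles at all and is therefore constant. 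These constants agree across the interior nodes of $\Gamma$, since the two branch values of the unit $u$ coincide; as $\Gamma$ is connected, the restriction of $u$ to $(\cc_\Gamma)_0$ is a single constant $\lambda_0\in\k^{*}$.

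Next I would set up the iteration that extracts the first non-constant coefficient. Because $u-\lambda_0$ vanishes on $(\cc_\Gamma)_0$, which is cut out by $t$ near its generic points, I can write $u-\lambda_0=t\,w_1$ with $w_1$ rational and regular on $(\cc_\Gamma)_0$ away from the boundary nodes. Relative to $\O^\dagger$ one has $\DLog(t)=0$, so that $d u=t\,dw_1$ and
\[\omega=\DLog(s)=\DLog(u)=\frac{du}{u}=\frac{t\,dw_1}{u};\]
hence $\ord_{C_v}(\omega)\ge 1$, with equality exactly when $w_1|_{C_v}$ is non-constant. Repeating — writing $w_i-\lambda_i=t\,w_{i+1}$ whenever $w_i$ is constant on $(\cc_\Gamma)_0$ — I reach the first index $h$ at which $w_h$ is non-constant on some component. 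The resulting identity $\omega=t^{h}\,dw_h/u$ shows that this $h$ equals $\min_{v\in\Gamma}\varphi(v)$ and that $\varphi(v)=h$ precisely when $w_h|_{C_v}$ is non-constant. I then take $f:=w_h|_{(\cc_\Gamma)_0}$, which by construction is non-constant on exactly the components $C_v$ with $\varphi(v)=h$ and constant on those with $\varphi(v)>h$; furthermore $\omega/t^{h}$ restricts on each such $C_v$ to $\lambda_0^{-1}\,df|_{C_v}$, so the leading log-differential $\omega_v$ is \emph{exact} up to the scalar $\lambda_0^{-1}$.

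Finally I would control the poles of $f$ via Lemma~\ref{slopes} applied to $\omega$: for $v$ with $\varphi(v)=h$ and an edge $e\ni v$ one has $\ord_{p_e}(\omega_v)=s_\varphi(v,e)$, and since $\omega_v=\lambda_0^{-1}df_v$ this order equals minus the pole order of $f_v$ at $p_e$ when $f_v$ has a pole there, and is $\ge 1$ otherwise. For an interior edge $e=vv'$ the slope is $s_\varphi(v,e)=\varphi(v')-\varphi(v)\ge 0$, so $f$ has no pole at interior nodes and glues to a genuine rational function; poles occur only at boundary nodes $p_e$ with $e\in E^\partial_v$, $v\in\partial\Gamma$, $\varphi(v)=h$ and $s_\varphi(v,e)<0$, of order exactly $-s_\varphi(v,e)$. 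This is precisely the bound $(f)+D_\varphi\ge 0$, so $f\in H^0((\cc_\Gamma)_0,\O(D_\varphi))$ is non-constant on exactly the components with $\varphi(v)=h$, which is the definition of $\cc_0$-ampleness.

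The hard part is the node-local bookkeeping in the second and third steps. Near a node the parameter satisfies $t=x_1x_2$, so ``dividing by $t$'' does not commute with restriction to a branch, and one must verify carefully that each $w_i$ is a well-defined rational function whose only poles on $(\cc_\Gamma)_0$ sit at boundary nodes, with the multiplicities predicted by the slopes of $\varphi$, and that both the iteration and the gluing across interior nodes are consistent in the local model $\O[x_1,x_2]/(x_1x_2-t)$. When $c\notin\Z$ the quotient $s/t^{c}$ only makes sense after the $l$-fold base change and subdivision of Section~2, which I would invoke at the outset so that $\psi$ takes integral values on the refined graph.
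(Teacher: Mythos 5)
Your proof is correct and follows essentially the same route as the paper's: both exhibit $t^{-c}s$ as a unit that is constant on $(\cc_\Gamma)_0$ and its thickenings up to order $t^{h}$, form $\tilde s=(t^{-c}s-L)/t^{h}$ (your iterated extraction of the $\lambda_i$ simply builds the lift $L\in\O_h$ one coefficient at a time), and read off the divisor of $\tilde s$ from the slopes of $\varphi$ via Lemma~\ref{slopes} using the identity $d\tilde s=(t^{-c}s)\,\omega/t^{h}$. The node-local issues you flag at the end are resolved in the paper exactly as you suggest: one works on the open subscheme obtained by deleting the components outside $\Gamma$ (hence the boundary nodes), where vanishing on the reduced central fiber does imply divisibility by $t$, and only afterwards extends $\tilde s$ as a rational function to all of $(\cc_\Gamma)_0$.
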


\begin{proof}
Let $(\mathring{\cc_{\Gamma'}})_0=\cc_0\setminus\cup_{v\notin\Gamma'} C_v$, the points that are contained only in curves $C_v$ for $v\in\Gamma'$. 
 Let $(\mathring{\cc}_{\Gamma'})_k$ be the $k$th order thickening of $(\mathring{\cc}_{\Gamma'})_0$ in $\cc_k=\cc\times_{\Spec \O} \Spec \O/(t^{k+1})$ given by pulling back the structure sheaf of $\cc_k$.
 On $(\mathring{\cc}_{\Gamma'})_0$, $t^{-c}s$ is a unit.  In fact, it is invertible near non-special points since there $\cm^{\gp}$ is the sheaf of units.  At special points of $C_v$ contained in $(\mathring{\cc_{\Gamma'}})_0$, $s_v=t^{-c}s|_{C_v}$ has neither poles nor zeroes by Lemma \ref{slopes}.
 Consequently, $t^{-c}s$ is a unit on $(\mathring{\cc}_{\Gamma'})_k$ for all $k\geq 0$.
 
 Since $t^{-c}s$ is a unit on $(\mathring{\cc}_{\Gamma'})_0$, it is constant on complete components of $(\mathring{\cc}_{\Gamma'})_0$ .  These are of the form $C_v$ for $v\in\Gamma'\setminus\partial\Gamma'$ where $\partial \Gamma'$ is the set of vertices in $\Gamma'\cap \overline{\Sigma\setminus\Gamma'}$.
Since $\omega=\frac{\text{d}t^{-c}s}{t^{-c}s}$ is the log differential of a constant function, it vanishes on $C_v$ for $v\in\Gamma\subset\Gamma'\setminus\partial\Gamma'$.
 If we let  $h=\min_{v\in\Gamma}\varphi(v)$, we must have $h>0$.  
 
 Now, we will subtract an appropriate constant from $t^{-c}s$ and divide by $t^h$ to find a rational function on $(\cc_\Gamma)_0$ that has poles of the same order as $\frac{\omega}{t^h}$.
This will give the desired section of $\O_{(\cc_\Gamma)_0}(D_\varphi)$.
Note that $\omega$ vanishes on $(\mathring{\cc}_{\Gamma})_{h-1}$, that is, $\omega$ vanishes to the $(h-1)^{\text{st}}$ order on components $C_v$ with $v\in\Gamma$. Therefore, $t^{-c}s$  is equal to a constant $\overline{L}\in\O_{h-1}$ on the completion, $(\cc_\Gamma)_{h-1}$.  Lift $\overline{L}$ to some $L\in\O_h$.    From the exact sequence
\[\xymatrix{0\ar[r]&t^h\O_{(\mathring{\cc}_{\Gamma})_{h}}\ar[r]& \O_{(\mathring{\cc}_{\Gamma})_{h}}\ar[r]& \O_{(\mathring{\cc}_{\Gamma})_{h-1}}\ar[r]&0},\]
$t^{-c}s-L$ is a section of 
 $t^h\O_{(\mathring{\cc}_{\Gamma})_{h}}$ .

Let $\tilde{s}=\frac{t^{-c}s-L}{t^h}$.  Extend $\tilde{s}$ as a rational function to each component of $(\cc_\Gamma)_0$.   
If $\varphi(v)>h$, then $\frac{\text{d}\tilde{s}}{t^{-c}s}=\frac{\omega}{t^h}=0$ on $C_v$ so $\tilde{s}$ is constant on $C_v$.  On the other hand, if $\varphi(v)=h$, then $\frac{\text{d}\tilde{s}}{t^{-c}s}=\frac{\omega}{t^h}$ is non-vanishing on $C_v$, and so $\tilde{s}$ is non-constant on $C_v$.
For $v\in\Gamma$ with $\varphi(v)=h$ and $e$, an edge in $\Gamma'$ containing $v$, from Lemma \ref{slopes}, we have $\ord_{p_e}(\frac{\omega}{t^h}|_{C_v})=s_{\varphi_m}(v,e)$.  If this quantity is negative, $\frac{\omega}{t^h}|_{C_v}$ considered as a log $1$-form has a pole at $p_e$.  Now $t^{-c}s$ is a constant on $C_v$, so from $\frac{\text{d}\tilde{s}}{t^{-c}s}=\frac{\omega}{t^h}$, we have that $\text{d}\tilde{s}$ has a pole of the same order.  But then,
 $\ord_{p_e}(\text{d}\tilde{s}|_{C_v})=\ord_{p_e}(\tilde{s}|_{C_v})$ where $\tilde{s}$ is considered as a rational function.  Consequently, $\ord_{p_e}(\tilde{s}|_{C_v})=s_{\varphi_m}(v,e)$.
 
In any case, for any $v\in\Gamma$ with $\varphi(v)=h$ and edge $e\in E(\Gamma')\setminus E(\Gamma)$ containing $v$, we have
\[\ord_{p_e}(\tilde{s}|_{C_v})\geq \min(s_{\varphi}(v,e),0),\]
and so  $\tilde{s}$ is a section of $\O_{(\cc_\Gamma)_0}(D_{\varphi})$.
\end{proof}

\begin{lemma} \label{vanishingonedges}
If $\psi$ is non-constant on an edge $e\in E(\Sigma)$ then $\varphi=0$ on $e$.
\end{lemma}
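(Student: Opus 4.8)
The plan is to reduce the statement to a computation of the values of $\varphi$ at the endpoints of $e$ and to show that each such value is forced to be $0$. Since $\varphi$ is by definition linear on a bounded edge, if $e=v_1v_2$ is bounded it suffices to prove $\varphi(v_1)=\varphi(v_2)=0$, after which linearity gives $\varphi\equiv 0$ on $e$; for an unbounded edge with endpoint $v$ I must in addition check that the slope of $\varphi$ away from $v$ vanishes, since such a ray is determined by its value at $v$ together with its slope. Thus the whole lemma rests on understanding when $\varphi(v)=0$.

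The key computation I would carry out is the identity that $\varphi(v)=0$ if and only if $s_v$ is non-constant on $C_v$. To see this, set $\tilde u = s/t^{\psi(v)}$, so that $\tilde u$ has order of vanishing exactly $0$ along the generic point of $C_v$ and $\tilde u|_{C_v}=s_v$. Because $\DLog$ of anything pulled back from the base $\O^\dagger$ vanishes in the relative log differentials $\Omega^1_{\cc^\dagger/\O^\dagger}$ — this is precisely the relation $(0,1\otimes\phi(b))=0$ in the definition of log differentials — we have $\DLog(t)=0$, and hence $\omega=\DLog(s)=\DLog(\tilde u)$ in $\Omega^1_{\cc^\dagger/\O^\dagger}$. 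At the generic point of $C_v$ the log structure is trivial and $\tilde u$ is a unit, so there $\omega=d\tilde u/\tilde u$, whose restriction to $C_v$ is the genuine logarithmic derivative $ds_v/s_v$. This restriction is nonzero exactly when $s_v$ is non-constant; therefore $t$ divides $\omega$ along $C_v$ precisely when $s_v$ is constant, which is the claimed characterization of $\varphi(v)$. (This is the same mechanism already exploited in Lemma \ref{c0ample}, where constancy of $t^{-c}s$ on a component forced $\omega$ to vanish there.)

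With this characterization the lemma follows quickly. If $\psi$ is non-constant on $e$ then its slope along $e$ is nonzero, and by Lemma \ref{slopes} this slope at an endpoint $v$ equals $\ord_{p_e}(s_v)$. A nonzero order means $s_v$ has an actual zero or pole at the special point $p_e$, so $s_v$ is non-constant on $C_v$; by the step above, $\varphi(v)=0$. This applies to both endpoints of a bounded edge, and linearity then yields $\varphi\equiv 0$ on $e$. For an unbounded edge I would additionally note, again via Lemma \ref{slopes}, that the slope of $\varphi$ away from $v$ is $\ord_{p_e}(\omega_v)$; since $\varphi(v)=0$ we have $\omega_v=ds_v/s_v$, and as $s_v$ has a zero or pole at the marked point $p_e$ this form differs from a nonzero multiple of the log generator $du/u$ by a regular form, hence has order $0$ as a log differential. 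Thus the slope vanishes too, and $\varphi\equiv 0$ on the ray.

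The main obstacle, and the single place that needs care, is the middle paragraph: correctly matching the order of vanishing $\varphi(v)$ of the log differential $\omega$ along $C_v$ with the constancy of $s_v$. This requires using that one works with \emph{relative} rather than absolute log differentials, so that the $\psi(v)\,dt/t$ term is killed and $\omega$ restricts on $C_v$ to the honest logarithmic derivative of $s_v$; and it requires the log-geometric fact that $du/u$ and $dx/x$ are regular generators at marked points and nodes, which is exactly what makes a zero or pole of $s_v$ at a special point invisible to the log differential and hence forces the relevant slopes to vanish rather than to blow up.
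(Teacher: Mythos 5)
Your proof is correct and follows essentially the same route as the paper's: Lemma \ref{slopes} forces $s_v$ to have a zero or pole at $p_e$, hence $\omega=\frac{\mathrm{d}(t^{-\psi(v)}s)}{t^{-\psi(v)}s}=\frac{\mathrm{d}s_v}{s_v}$ is non-vanishing on $C_v$, so $\varphi(v)=0$. The paper's version is terser (it leaves the endpoint-to-edge step and the unbounded-edge slope computation implicit), whereas you spell out the role of $\DLog(t)=0$ in the relative log differentials and the order-zero check at marked points, but the mechanism is identical.
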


\begin{proof}
Suppose that $e$ is adjacent to a vertex $v$.  Then by Lemma \ref{slopes}, $s_v$ has a pole or zero at $p_e$.  Consequently, $\omega=\frac{\text{d} s}{s}$ which is equal to $\frac{\text{d}t^{-\psi(v)}s}{t^{-\psi(v)}s}=\frac{\text{d}s_v}{s_v}$ on $C_v$ is non-vanishing.
\end{proof}

\begin{lemma} \label{nonvanishingonedges} If $\psi$ is constant on an edge $e\in E(\Sigma)$ then $\varphi$ is non-constant on $e$.
\end{lemma}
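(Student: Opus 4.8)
The plan is to reduce the claim to the vanishing of a residue at $p_e$ and then to treat separately the two cases $\varphi(v)=0$ and $\varphi(v)>0$. Fix an endpoint $v$ of $e$ with corresponding special point $p_e\in C_v$. Since $\varphi$ is linear on $e$, being non-constant on $e$ is the same as having nonzero slope there, and by Lemma \ref{slopes} applied to $\omega$ as a section of $\Omega^1_{\cc^\dagger/\O^\dagger}$ this slope equals $\ord_{p_e}((\omega)_v)$, where $(\omega)_v=\pi^*(\omega/t^{\varphi(v)})|_{C_v}$ is the leading form, which is nonzero by the definition of $\varphi(v)$. Writing $(\omega)_v=f\,\frac{dz}{z}$ near $p_e$ in a local coordinate $z$, the order $\ord_{p_e}((\omega)_v)=\ord_{z=0}(f)$ can vanish only if $f$ is a unit at $0$, in which case $(\omega)_v$ has a simple pole with nonzero residue $f(0)$. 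Hence it suffices to show that $(\omega)_v$ has vanishing residue at $p_e$.

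The hypothesis feeds in through Lemma \ref{slopes} applied to $s$: since $\psi$ is constant on $e$ its slope there is zero, so $\ord_{p_e}(s_v)=0$ and $s_v=\pi^*(s/t^{\psi(v)})|_{C_v}$ has neither a zero nor a pole at $p_e$. If $\varphi(v)=0$ then the leading form is simply the logarithmic differential $(\omega)_v=\omega|_{C_v}=ds_v/s_v$, whose residue at $p_e$ is $\ord_{p_e}(s_v)=0$; this case is immediate.

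The substantive case, and the step I expect to be the main obstacle, is $\varphi(v)>0$, where $\omega$ vanishes identically on $C_v$, forcing $ds_v/s_v=0$ and hence $s_v\equiv a$ for some $a\in\C^*$. Here the residue of the leading form is no longer manifestly the slope of $\psi$, and the plan is to recover it by reusing the construction in the proof of Lemma \ref{c0ample}: with $c=\psi(v)$, subtracting from the unit $t^{-c}s$ a constant that agrees with it to order $\varphi(v)-1$ produces a rational function $\tilde s_v$ on $C_v$ with $(\omega)_v=\tfrac1a\,d\tilde s_v$. As an exact differential, $d\tilde s_v$ has vanishing residue at every point, in particular at $p_e$. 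Combining the two cases gives $\res_{p_e}((\omega)_v)=0$, so $\ord_{p_e}((\omega)_v)\neq 0$ and $\varphi$ is non-constant on $e$. The delicate point is ensuring that the leading correction to $t^{-c}s$ restricts to a genuinely non-constant function on $C_v$ at order exactly $\varphi(v)$, which is what the constant-subtraction guarantees; morally this is the principle, emphasized in the introduction, that an exact rational $1$-form has no simple pole.
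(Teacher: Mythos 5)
Your proof is correct and follows essentially the same route as the paper's: both reduce the claim via Lemma \ref{slopes} to showing $\ord_{p_e}(\omega_v)\neq 0$ as a log $1$-form, and both obtain this by using the constant-subtraction construction from the proof of Lemma \ref{c0ample} to write $\omega_v$ as $\frac{\text{d}\tilde{s}}{t^{-c}s}\big|_{C_v}$, an exact differential of a rational function over a unit, hence without a simple pole at $p_e$. Your explicit split into the cases $\varphi(v)=0$ and $\varphi(v)>0$ is just a slightly more careful organization of the same argument (the paper handles both at once by contradiction), since the ``no simple pole'' justification differs marginally between the two cases.
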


\begin{proof}
Suppose $\varphi$ is constant on $e$.   Let $e$ be adjacent to a vertex $v$.   $e$ corresponds to a special point $p_e$ on $C_v$.   Set $\tilde{s}=\frac{t^{-c}s-L}{t^{\varphi(v)}}$
and  $\omega_v=\frac{\omega}{t^{\varphi(v)}}|_{C_v}$
 as in the proof of the Lemma \ref{c0ample}.  Now, $\tilde{s}$ is regular on $(\cc_e)_0$ and non-constant on $C_v$.
Since $\omega_{v}=\frac{\text{d}\tilde{s}}{t^{-c}s}|_{C_{v}}$,
$\omega_{v}$ does not have a simple pole at $p_e$ and so vanishes as a log $1$-form at $p_e$.  Therefore, $\ord_{p_e}(\omega_{v})\neq 0$ and the contradiction follows from Lemma \ref{slopes}.
\end{proof}

This section can be seen in terms of Berkovich curves as in \cite{BPR}.  One has a rational function $s$ on a curve $C^*$.  The function $\psi$ is $\log|s|$ on $(C^*)^{\an}$.  The vanishing function $\varphi$ is $\log|\omega|$ where absolute value is taken with respect to a metric on the canonical bundle.  The above conditions constrain $\varphi$ in terms of $\psi$.

\section{Proof of Lifting Condition}
In this section, we prove Theorem \ref{maintheorem} and Corollary \ref{c:istrop}.  

We first give the proof of Theorem \ref{maintheorem}.  Let $f:C^*\rightarrow (\K^*)^n$ be the map of a smooth curve.   Pick a model $\cc$ according to Theorem \ref{goodmodel}.  Let $\Trop(f):\Sigma\rightarrow N_\R$ be the parameterized tropicalization.
For $m\in M$, define a log $1$-form $\omega_m=f^*\frac{dz^m}{z^m}$ on $\cc$.  By employing Lemma \ref{KRationalZeroes}, we may replace $\K$ by a finite extension to ensure that each $\omega_m$ has $\K$-rational zeroes.  
The finite extension has the effect of rescaling $\Trop(f)$ by a factor of $l$ for some $l\in\N$.

Let $\varphi_m$ be the vanishing function of $\omega_m$ considered as a section of $\Omega^1_{\cc^\dagger/\O^\dagger}$.  Note that $\omega_m$ is the log differential of $f^*z^m$ whose vanishing function is $\psi(x)=\<m,\Trop(f)(x)\>$.
Since $\omega_m$ is regular on $\cc$, $\varphi_m$ is non-negative.

The properties of $\varphi_m$ in Theorem \ref{maintheorem} follow from the results in the previous two sections.   
 Since 
\[f^*\left(\frac{dz^{m_1+m_2}}{z^{m_1+m_2}}\right)=f^*\left(\frac{dz^{m_1}}{z^{m_1}}\right)+f^*\left(\frac{dz^{m_2}}{z^{m_2}}\right),\] the function $m\mapsto \varphi_m|_{\Sigma^\bullet}$ is a tropical homomorphism by Lemma \ref{addingsections}.  The fact that $\varphi_m\in L(K_\Sigma)$ follows from Lemma \ref{positivity}.  By Lemma \ref{vanishingonedges}, $\varphi_m=0$ on any bounded edge $e$ with $m\cdot e\neq 0$.  By Lemma \ref{nonvanishingonedges}, $\varphi_m$ does not have slope zero on any bounded edge $e$ with $m\cdot e=0$.  The $\cc_0$-ampleness statement is Lemma \ref{c0ample}.

The proof of Corollary \ref{c:istrop} is almost immediate.  Given a curve $C^*\subset (\K^*)^n$, let $f:C^*\rightarrow (\K^*)^n$ be the closed embedding.  The arguments above produce a tropical parameterization $p:\Sigma\rightarrow\Sigma'=\Trop(C^*)$.  

\bibliographystyle{plain}

\end{document}